\newtheoremstyle{style}
{} % Space above
{} % Space below
{\itshape} % Body font
{} % Indent amount
{\bfseries} % Theorem head font
{.} % Punctuation after theorem head
{.5em} % Space after theorem head
{} % Theorem head spec (can be left empty, meaning `normal')
\theoremstyle{style}
\newcommand{\comment}[1]{}
\newtheorem{thm}{Theorem}[section]
\newtheorem{cor}[thm]{Corollary}
\newtheorem{prop}[thm]{Proposition}
\newtheorem{lem}[thm]{Lemma}
\newtheorem{conj}[thm]{Conjecture}
\newtheorem*{thm*}{Theorem}
\newtheorem*{cor*}{Corollary}
\newtheorem*{prop*}{Proposition}
\newtheorem*{lem*}{Lemma}
\newtheorem*{conj*}{Conjecture}
\newtheorem*{quest*}{Question}
\newtheorem*{claim*}{Claim}
\newtheorem*{ppty*}{Property}
\theoremstyle{definition}
\newtheorem{defn}[thm]{Definition}
\theoremstyle{remark}
\newtheorem{rem}[thm]{Remark}
\DeclareMathOperator{\Spec}{Spec}
\DeclareMathOperator{\Gal}{Gal}
\DeclareMathOperator{\Tr}{Tr}
\DeclareMathOperator{\Hom}{Hom}
\DeclareMathOperator{\End}{End}
\DeclareMathOperator{\Fil}{Fil}
\DeclareMathOperator{\id}{id}
\DeclareMathOperator{\Image}{Im}
\DeclareMathOperator{\CH}{CH}
\DeclareMathOperator{\Pic}{Pic}
\DeclareMathOperator{\Griff}{Griff}
\DeclareMathOperator{\Alb}{Alb}
\DeclareMathOperator*{\ord}{ord}
\DeclareMathOperator{\NS}{NS}
\DeclareMathOperator{\Frob}{Frob}
\DeclareMathOperator{\character}{char}
\DeclareMathOperator{\GrpSch}{GrpSch}
\DeclareMathOperator{\rat}{rat}
\DeclareMathOperator{\alg}{alg}
\DeclareMathOperator{\eff}{eff}
\DeclareMathOperator{\num}{num}
\DeclareMathOperator{\cl}{cl}
\DeclareMathOperator{\Jac}{Jac}
\DeclareMathOperator{\red}{red}
\DeclareMathOperator{\op}{op}
\DeclareMathOperator{\gr}{gr}
\DeclareMathOperator*{\colim}{colim}
\DeclareMathOperator{\cont}{cont}
\newcommand{\Z}{{\mathbb{Z}}}
\newcommand{\C}{{\mathbb{C}}}
\newcommand{\F}{{\mathbb{F}}}
\newcommand{\Q}{{\mathbb{Q}}}
\newcommand{\mf}[1]{\mathfrak{#1}}
\newcommand{\mc}[1]{\mathcal{#1}}
\newcommand{\ol}[1]{\overline{#1}}
\newcommand{\ul}[1]{\underline{#1}}
\numberwithin{equation}{section}
\title{On the Beilinson--Bloch conjecture over function fields}
\author{Matt Broe}
\begin{document}
	\linespread{1.3}\selectfont
	\begin{abstract}
		Let $k$ be a field and $X$ a smooth projective variety over $k$. When $k$ is a number field, the Beilinson--Bloch conjecture relates the ranks of the Chow groups of $X$ to the order of vanishing of certain $L$-functions. We consider the same conjecture when $k$ is a global function field, and give a criterion for the conjecture to hold for $X$, extending an earlier result of Jannsen. As a first application, we provide a new proof of a theorem of Geisser connecting the Tate conjecture over finite fields and the Birch and Swinnerton-Dyer conjecture over function fields. 
		\par
		We then prove the Tate conjecture for a product of a smooth projective curve with a power of a CM elliptic curve over any finitely generated field, and thus deduce special cases of the Beilinson--Bloch conjecture. Via related arguments, we obtain a conditional answer to a question of Moonen on the Chow groups of powers of ordinary CM elliptic curves over arbitrary fields.
	\end{abstract}
	
	\maketitle
	
	\tableofcontents

    \section{Introduction}
    \subsection{Conjectures on algebraic cycles}
    Let $k$ be a field and $X$ a smooth variety over $k$. The Chow group of codimension-$i$ algebraic cycles with rational coefficients, denoted by $\CH^i(X)$, is an important and generally mysterious geometric invariant of $X$. Its structure is the subject of numerous outstanding conjectures. When $k$ is a finitely generated field, the Tate conjecture \cite[Conjecture 1]{Tate1965}, as generalized by Jannsen \cite[Conjecture 7.3]{Jannsen1990}, predicts that the $\ell$-adic cycle class map
    \begin{gather*}
        \cl: \CH^i(X) \otimes_\Q \Q_\ell \to H^{2i}(\ol{X}, \Q_\ell(i))
    \end{gather*}
    surjects onto the Galois-invariant subspace of $H^{2i}(\ol{X}, \Q_\ell(i))$. Here $\ell$ is a fixed prime invertible in $k$. When $k$ is finite, a conjecture of Beilinson (also generalized by Jannsen) further predicts that $\cl$ is injective (\cite[page 3]{Beilinson1987}, \cite[Conjecture 12.4 a)]{Jannsen1990}). See \cite{Totaro2017} and \cite[Definition 55]{Kahn2005} for some background and known results on these two conjectures. 
    \par
    Over an infinite field, the cycle class map may have a kernel. We thus let 
    \begin{gather*}
        \CH^i(X)_0 = \ker(\CH^i(X) \to H^{2i}(\ol{X}, \Q_\ell(i)))
    \end{gather*}
    denote the homologically trivial subgroup of $\CH^i(X)$. When $k$ is a global field, and $X$ is smooth and projective over $k$, the Beilinson--Bloch conjecture predicts that
    \begin{gather}\label{eqBBIntro}
        \dim_\Q \CH^i(X)_0 = \ord\limits_{s=i} L(H^{2i-1}(\ol{X}, \Q_\ell), s).
    \end{gather}
    On the right is the order of vanishing at $s=i$ of the Hasse--Weil $L$-function. Note that when $\character(k) = 0$, the well-definedness of the $L$-function at $s=i$ is itself a major outstanding conjecture. The Beilinson--Bloch conjecture was formulated independently by Beilinson \cite[Conjecture 5.0]{Beilinson1987} and Bloch \cite[page 94]{Bloch1984} over number fields (and both attributed it to Swinnerton-Dyer, though to our knowledge he never published it). When $X$ is an abelian variety and $i = 1$, the identity \eqref{eqBBIntro} is equivalent to the rank part of the Birch and Swinnerton-Dyer conjecture for $X$ (hereafter referred to simply as the Birch and Swinnerton-Dyer conjecture). See \cite{Scholbach2017} for a more recent refinement of the Beilinson--Bloch conjecture over number fields. 
    \par
    The Beilinson--Bloch conjecture, as formulated here, is related to other conjectures of Beilinson, Bloch, and Murre concerning filtrations on Chow groups of varieties over arbitrary fields. See \cite{Jannsen1994} for an overview of the latter conjectures.
    \par
    Our results include a sufficient condition for the Beilinson--Bloch conjecture over a global field of positive characteristic. Let $K$ be the function field of a smooth, projective, geometrically integral curve $C$ over $\F_q$, let $G_{\F_q}$ be the absolute Galois group of $\F_q$, let $\Frob_q \in G_{\F_q}$ denote the geometric Frobenius, and let $i \in \Z$.
	\begin{thm}[Corollary \ref{corBBCriterion}]\label{thmBBCriterionIntro}
		Let $f: X \to C$ be a flat, finite-type, separated morphism, with generic fiber $X_K$. If there exists a dense affine open subscheme $U \subset C$ such that
        \begin{enumerate}[(i)]
            \item the base change $f_U: X_U \to U$ of $f$ to $U$ is smooth and projective,
            \item the cycle class map
            \begin{gather*}
                \CH^i(X_U) \otimes_\Q \Q_\ell \to H^{2i}(\ol{X}_U, \Q_\ell(i))^{G_{\F_q}}
            \end{gather*}
            is an isomorphism, and
            \item the eigenvalue $1$ of $\Frob_q$ acting on $H^{2i}(\ol{X}_U, \Q_\ell(i))$ is semisimple,
        \end{enumerate}
        then the Beilinson--Bloch conjecture holds in codimension $i$ for $X_K$.
	\end{thm}
    It is useful to translate the hypotheses of Theorem \ref{thmBBCriterionIntro} into statements about $X$, rather than $X_U$, and this is done in Corollary \ref{corClosedPointsBBCriterion}. Later in this introduction, we will discuss examples where these hypotheses can be checked in practice, using a method developed by Soul\'e \cite{Soule1984}, Geisser \cite{Geisser1998}, and Kahn \cite{Kahn2003} to unconditionally prove cases of Beilinson's conjecture (see \cite[Theorem 76]{Kahn2005}, and also Lemma \ref{lemTateEquivs}).
    \par
    Theorem \ref{thmBBCriterionIntro} is deduced from an analogous statement which also applies over a higher-dimensional base (Theorem \ref{thmBBCritArbBase}). The proof is essentially a weight argument, based on the Leray spectral sequence and Deligne's Weil II \cite{Deligne1980}, plus some consequences of the latter developed in \cite{Jannsen1990}. 
    \begin{rem}\label{remJannsenContrast}
        Theorem \ref{thmBBCriterionIntro} should be compared with the related \cite[Theorem 12.16]{Jannsen1990} and \cite[Theorem 5.3]{Jannsen2007}. It mainly differs in that Jannsen's results require an additional condition on fibers of $f$ over closed points of $U$, and are formulated for more general motivic cohomology groups, rather than just Chow groups. By contrast, the proof of Theorem \ref{thmBBCriterionIntro} exploits cohomological purity to dispense with additional hypotheses on closed fibers, yielding a statement which only concerns Chow groups. A practical advantage of our results, relative to Jannsen's, comes from the variant of the theorem allowing for non-smooth morphisms (Corollary \ref{corClosedPointsBBCriterion}). This is obtained by combining Theorem \ref{thmBBCriterionIntro} with properties of the weight filtration on $\ell$-adic homology developed in \cite{Jannsen1990}, and its assumptions are often more straightforward to directly check in examples than Jannsen's. Also, we explicitly connect our results to $L$-functions, whereas Jannsen's results are phrased in the related language of Abel--Jacobi maps. 
        \par
        For all of the concrete varieties $V$ for which we prove the Beilinson--Bloch conjecture in this paper, we also show that the hypotheses of \cite[Theorem 5.3]{Jannsen2007} are satisfied. This theorem has consequences for all motivic cohomology groups of $V$, and also implies that $V$ satisfies Murre's conjecture (loc. cit. Conjecture 4.1). Theorem \ref{thmBBCriterionIntro} is used in these examples to obtain the $L$-function formulation of the Beilinson--Bloch conjecture. In a follow-up paper \cite{Broe2025}, we use Theorem \ref{thmBBCriterionIntro} to prove cases of the Beilinson--Bloch conjecture for varieties which we do not know to verify the assumptions of \cite[Theorem 5.3]{Jannsen2007}. See Remark \ref{remJannsenContrastDetailed} for more details. 
    \end{rem}
	
	\par
	Our first application of Theorem \ref{thmBBCriterionIntro} is a new proof of the following result of Geisser, which connects the Tate and Birch and Swinnerton-Dyer conjectures.
	
	\begin{thm}[Theorem \ref{thmATGeneralization}]\label{thmATGeneralizationIntro}
		Let $f: X \to C$ be a flat projective morphism, with $X$ a smooth variety over $\F_q$, and $X_K$ smooth and geometrically integral over $K$. The following are equivalent:
		\begin{enumerate}[(i)]
			\item the Tate conjecture holds for divisors on $X$;
			\item the Tate conjecture holds for divisors on $X_K$, and the Birch and Swinnerton-Dyer conjecture holds for the Albanese variety $\Alb(X_K)$;
			\item the Brauer group of $X$ is finite;
			\item the Tate conjecture holds for divisors on $X_K$, and the Tate--Shafarevich group of $\Alb(X_K)$ is finite.
		\end{enumerate}
	\end{thm}
	In the case where $X$ is a surface, the equivalence of (iii) and (iv) is due to Artin--Grothendieck \cite[\hphantom{}(4.7)]{Grothendieck1968}, and is closely linked with a statement conjectured by Artin--Tate in the 1960s \cite[Conjecture (d)]{Tate1966b}, which relates the critical value of the zeta function of $X$ with that of the $L$-function of $\Alb(X_K)$. That conjecture was originally proved in the early 2000s by combining Artin--Grothendieck's theorem with deep results of Milne \cite{Milne1986} and Kato--Trihan \cite{KT2003}. See \cite{LRS2022} for further background, and two alternative proofs of Artin--Tate's Conjecture (d). 
    \par
    For $X$ of arbitrary dimension, the equivalence of (iii) and (iv) is \cite[Theorem 1.1]{Geisser2021} (with the caveat that we require $X$ to be projective over $\F_q$, while Geisser only requires it to be proper). Our proof is different from Geisser's, which uses \'etale motivic cohomology, and also yields a formula for the order of the Tate--Shafarevich group of $\Alb(X_K)$, conditional on certain conjectures. We first show the equivalence of (i) and (ii): the other equivalences then immediately follow from \cite{Milne1986} and \cite{KT2003}. 
	\par
	Theorem \ref{thmATGeneralizationIntro} was previously reproved (and generalized) in \cite{Qin2024}, via some geometric arguments with Brauer groups. All three proofs use the aforementioned results of Milne and Kato--Trihan. As noted earlier, our method is otherwise based on classical weight techniques, and makes almost no direct reference to Brauer or Tate--Shafarevich groups. In this sense, it yields a simpler proof, at the cost of not recovering the finer consequences of the other proofs for the structure of these groups.
    \par
	Following the demonstration of Theorem \ref{thmATGeneralizationIntro}, we proceed to use Theorem \ref{thmBBCriterionIntro} to prove some cases of the Beilinson--Bloch conjecture in codimension $> 1$. First, we give an alternative proof of a result of Kahn \cite[Corollary 1]{Kahn2021}, which asserts the finiteness of the Albanese kernel for certain constant surfaces over function fields. To go further, we need to establish some new cases of the Tate conjecture.
	
	\subsection{Powers of CM elliptic curves}
	For an abelian variety $A$ over a field $k$, let $\End_k^0(A) = \End_k(A) \otimes_\Z \Q$ denote its endomorphism algebra. An elliptic curve $E$ over $k$ is said to have complex multiplication (defined over $k$) if $\dim_\Q \End_k^0(E) \ge 2$.
	We adopt the convention that $E$ is ordinary\footnote{According to this definition, every elliptic curve in characteristic zero is ordinary.} if $\dim_\Q \End_{\ol{k}}^0(\ol{E}) \le 2$, supersingular if $\dim_\Q \End_{\ol{k}}^0(\ol{E})= 4$, and arithmetically supersingular if $\dim_\Q \End_k^0(E) = 4$. For a CM elliptic curve which is not arithmetically supersingular, the endomorphism algebra $\End^0_k(E)$ is an imaginary quadratic field. The endomorphism algebra of an arithmetically supersingular elliptic curve is a quaternion algebra over $\Q$ \cite[Corollary III.9.4]{Silverman2009}. 
	\par
	Our original motivation for writing this paper was an attempt to understand the Chow groups of $E^g$, where $E$ is an ordinary CM elliptic curve over $k$ and $g > 0$. We will describe a decomposition of the Chow motive of $E^g$ which gives considerable insight into its Chow groups. Powers of $E$ are thus natural test objects for conjectures on algebraic cycles: their Chow groups are interesting, yet more structured than those of most varieties. Indeed, Bloch provided early evidence for the Beilinson--Bloch conjecture by studying the Jacobian of the Fermat quartic curve over $\Q$ \cite{Bloch1984}, which becomes isogenous to the cube of a CM elliptic curve after base change to $\Q(e^{\pi i / 4})$. Tate also proved his eponymous conjecture for powers of CM elliptic curves over number fields \cite[page 106]{Tate1965}. 
	\par
	The decomposition is obtained through a general procedure. For any abelian variety $A$ over a field $k$, the action of the endomorphism algebra $D = \End^0_k(A)$ induces a direct sum decomposition of the motive $\mf{h}(A)$ of $A$. From this perspective, Deninger--Murre's canonical Chow--K\"unneth decomposition \cite[Corollary 3.2]{DM1991} is induced by the action of $\Q \subseteq D$. The Deninger--Murre decomposition is refined by a construction of Moonen \cite[Theorem 1]{Moonen2014}, which decomposes $\mf{h}(A)$ into subrepresentations of $D^{\op, \times}$.
	\par
	Particularly when $D$ is large, one can sometimes gain very fine control over the motives appearing in Moonen's decomposition, and over their Chow groups. For instance, Fu--Li showed that the motive of a supersingular abelian variety $A$ over an algebraically closed field has a very simple decomposition \cite[Theorem 2.9]{LF2021}. Among other applications, they used this to reprove some earlier results of Fakhruddin, including that algebraic and numerical equivalence coincide on $A$ up to torsion (\cite[Corollary 2.12(iii)]{LF2021}, \cite[Remark 1]{Fakhruddin2002}).
	\par
	Any supersingular abelian variety over an algebraically closed field is isogenous to a power of a supersingular elliptic curve \cite[Theorem (4.2)]{Oort1974}, and a power of an arithmetically supersingular elliptic curve over an arbitrary field has such a decomposition of its motive (see the treatment of \cite[Section 2]{Moonen2024a}, which applies in this generality). It is thus natural to ask whether the motive of a power of an elliptic curve $E$ with $F = \End^0_k(E)$ an imaginary quadratic field might be similarly manageable. However, the smaller endomorphism algebra introduces complications, so that interesting new motives appear in the decomposition of $\mf{h}(E^g)$. These are Tate twists of motives of the form $\otimes_F^r \mf{h}^1(E)$.
	\par
	Note that this explicit decomposition essentially appears in \cite[Example 2.8]{Cao2018}, though Cao only states it in characteristic zero. The decomposition was also discussed in \cite{Moonen2024a}. We give a proof of its existence and form over an arbitrary field (Corollary \ref{corQTensorPowerCMDecomp}).
	\par
	By work of Imai \cite{Imai1976}, Spie\ss \ \cite{Spiess1999}, Lombardo \cite{Lombardo2016}, and Kahn \cite{Kahn2025}, the Tate conjecture is known for products of elliptic curves over finitely generated fields. For a power of a CM elliptic curve, we use the decomposition of the motive to obtain a stronger result.
	\begin{thm}[Corollary \ref{corTateForEg}]\label{thmTate}
		For $E$ a CM elliptic curve over a finitely generated field $k$, $C$ a smooth projective curve over $k$, and $g \ge 0$, the full Tate conjecture holds for $E^g \times_k C$. 
	\end{thm}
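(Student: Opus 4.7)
The plan is to decompose $\mathfrak h(E^g\times C)$ into explicit motivic summands and verify the Tate conjecture on each piece, using either the main theorem on $\mathfrak h(E^g)$ (presumably proved earlier in the paper) or Faltings' isogeny theorem.

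First I would replace $k$ by a finite separable extension so that $E$ has CM over $k$; this is harmless because the Tate conjecture descends along finite separable extensions via Galois averaging after tensoring Chow groups and cohomology with $\mathbb Q$. Write $F=\End_k^0(E)$, an imaginary quadratic field or a quaternion algebra. Combining the Chow--K\"unneth decomposition $\mathfrak h(C)=\mathbf{1}\oplus\mathfrak h^1(C)\oplus\mathbf{1}(-1)$ with the Moonen-style decomposition of $\mathfrak h(E^g)$ (writing the latter as an orthogonal sum of Tate twists of $\otimes_F^r\mathfrak h^1(E)$ for $0\le r\le g$), one obtains a decomposition of $\mathfrak h(E^g\times C)$ whose summands are Tate twists of motives of two kinds: pure pieces $\otimes_F^r\mathfrak h^1(E)$ and mixed pieces $\otimes_F^r\mathfrak h^1(E)\otimes\mathfrak h^1(C)$. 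Since the Tate conjecture passes to motivic direct summands, it suffices to verify it on each such piece; the pure ones are covered by the main theorem on $\mathfrak h(E^g)$.

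For a mixed piece $(\otimes_F^r\mathfrak h^1(E))(i)\otimes\mathfrak h^1(J)$, with $J=\Jac(C)$, the $\ell$-adic realization decomposes (after choosing $\ell$ so that $F$ splits) into a sum of one-dimensional Galois characters built from the Hecke characters $\chi,\bar\chi$ attached to $E$. A weight count shows that only the pieces with $r$ odd contribute Galois invariants, at the unique integer Tate twist $i=(r+1)/2$ making the piece weight zero. For those, $(\otimes_F^r H^1(E,\mathbb Q_\ell))(i)$ is a direct summand of the rational Tate module $V_\ell A_r$ of a CM abelian variety $A_r$ over $k$, constructed from $E$ via the classical correspondence between algebraic Hecke characters of CM type and CM abelian varieties. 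The Galois invariants of the mixed piece then lie in $\Hom_{G_k}(V_\ell A_r,V_\ell J)$, which by Faltings' isogeny theorem over finitely generated fields (available in any characteristic) equals $\Hom(A_r,J)\otimes\mathbb Q_\ell$. Each such $k$-homomorphism yields an algebraic cycle on $E^g\times C$ by composing the associated correspondence on $A_r\times J$ with the correspondences embedding $\mathfrak h(A_r)$ into $\mathfrak h(E^r)\subseteq\mathfrak h(E^g)$ and the canonical identification $\mathfrak h^1(C)=\mathfrak h^1(J)$.

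The main obstacle is the construction of $A_r$ over $k$ itself, not merely over a further extension: the relevant twisted Hecke characters must be realized as $\ell$-adic characters of CM abelian varieties already defined over $k$. This is precisely why CM must be acquired over $k$ in the first step, and it is where classical CM theory (Shimura--Taniyama, Casselman, Weil) enters the argument. Once $A_r$ is available, Faltings' theorem closes the argument, and the semisimplicity half of the full Tate conjecture follows from the explicit abelian structure of the Galois action on these CM pieces.
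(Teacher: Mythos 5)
Your decomposition of $\mathfrak h(E^g\times_kC)$ matches the paper's, and checking the Tate conjecture piece by piece is the right strategy, but the handling of the mixed pieces is flawed. Having weight zero is necessary, not sufficient, for a Frobenius eigenvalue to equal $1$. The paper's Lemma \ref{lemFrobEigenvalue}(ii) is a $p$-adic valuation argument: writing $\alpha\mathcal O_F=\mathfrak p_1^e$ and $\bar\alpha\mathcal O_F=\mathfrak p_2^e$ (the two primes of $F$ over $p$), for any Weil $q$-integer $\beta$ of weight $t<r$ one has $\alpha^r\beta\neq q^s$ for all $s$, because $q^s/\alpha^r$ fails to be an algebraic integer at $\mathfrak p_1$. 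Translated to cohomology (Lemma \ref{lemNoInvariants}(ii)), $(\otimes_F^r\mathfrak h^1(E))(s)\otimes_\Q\mathfrak h^1(C)$ has \emph{no} Galois invariants for any $s$ once $r\ge 2$, so the Tate conjecture on those summands is vacuous and no auxiliary abelian variety is needed.

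The proposed $A_r$ in fact cannot exist for $r\ge 3$. Its Frobenius eigenvalue would have to be $\alpha^r q^{(1-r)/2}$, whose fractional ideal is $\mathfrak p_1^{e(r+1)/2}\mathfrak p_2^{-e(r-1)/2}$; the negative exponent at $\mathfrak p_2$ means this is not an algebraic integer, so by Honda--Tate no abelian variety over a finite field realizes it, and on the Hecke-character side the infinity type of the required character has a negative exponent and so is not attached to any CM abelian variety. (This is exactly the obstruction that forces one to use Kuga--Sato varieties rather than abelian varieties for higher-weight motives.) The only mixed piece with possibly nonzero invariants is $r=t=1$, namely Tate twists of $\mathfrak h^1(E)\otimes_\Q\mathfrak h^1(C)$, and there your appeal to the Tate conjecture for divisors on $E\times_k\operatorname{Jac}(C)$ (Tate, Faltings, Zarhin) is precisely what the paper uses; the semisimplicity also follows there. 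So the isogeny theorem is indeed the key input, but only for $r=1$, and the other pieces die by the valuation argument rather than by a CM construction.
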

    The proof of Theorem \ref{thmTate} is a specialization argument reducing to the case of a finite base field, where the statement follows from arithmetic properties of the zeta function of $E$ (similarly to the method of \cite{Spiess1999}). 
    \par
    From Theorems \ref{thmBBCriterionIntro} and \ref{thmTate}, we deduce the Beilinson--Bloch conjecture for a power of a CM elliptic curve $E$ over a global function field.  As a consequence, we find that the Chow groups of $E^g$ are quite simple. 
	\begin{thm}[Theorem \ref{thmChVanishOverFuncField}]\label{thmChVanishOverFuncFieldIntro}
		When $K$ is a global function field, and $E$ is an ordinary CM elliptic curve over $K$ with endomorphism algebra $F$, we have $\CH^i(\otimes_F^r \mf{h}^1(E)) = 0$ for $r \ge 2$ and all $i \in \Z$.
	\end{thm}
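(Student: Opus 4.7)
The plan is to combine theorem \ref{thmTate} with the refined criterion \cite[Theorem 5.3]{Jannsen2007}, which, under the Tate conjecture for a smooth proper model, identifies the Chow groups of a smooth projective variety over $K$ with a piece of continuous Galois cohomology of its $\ell$-adic realization.

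First I would spread $E$ over a dense open $U \subset C$ to an ordinary abelian scheme $\mathcal{E}/U$ with $F$-action, choose a smooth projective compactification $X/\mathbb{F}_q$ of $\mathcal{E}^r$, and arrange that the Moonen projector cutting out $\otimes_F^r \mathfrak{h}^1(E)$ from $\mathfrak{h}(E^r)$ extends to a correspondence on $X$. Theorem \ref{thmTate} supplies the Tate conjecture for $X$ (and for its fibers, after possibly enlarging), giving the input to Jannsen's criterion. Applied to the summand, the criterion yields, for each $i$, a filtration on $\CH^i(M)$ with $M := \otimes_F^r \mathfrak{h}^1(E)$ whose graded pieces are $H^p(G_K, H^q_\ell(M, \mathbb{Q}_\ell(i)))$ with $p + q = 2i$. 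Since $M$ is pure of motivic weight $r$, only $q = r$ contributes, and the $\ell$-adic realization, after extension of scalars to $F_\lambda$, is the rank-one character $\chi^{\otimes r}$, where $\chi$ encodes the Galois action on $H^1(\bar E, F_\lambda)$.

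It remains to show $H^p(G_K, \chi^{\otimes r}(i)) = 0$ for every $(p, i)$ with $p + r = 2i$ when $r \geq 2$. The cohomological dimension $\mathrm{cd}_\ell(G_K) = 2$ confines $p$ to $\{0, 1, 2\}$, leaving only three values of $i$ to address. For each, the Hochschild--Serre spectral sequence for $1 \to \pi_1^{\mathrm{geom}}(U) \to \pi_1(U) \to G_{\mathbb{F}_q} \to 1$ together with Poincaré duality on $\pi_1(U)$ reduces the vanishing to the claim that $\chi^{\otimes r}$ is not equal to a power of the cyclotomic character $q^j$. This in turn follows from CM theory over function fields: since $E$ is ordinary and non-isotrivial, $\chi(\Frob_x)$ is a Weil number of weight $1$ which generically generates $F$ over $\mathbb{Q}$, precluding any identity $\chi^{\otimes r} = q^j$ as soon as $r \geq 2$. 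The main obstacle will be this final step, as it requires carefully leveraging the CM structure to rule out accidental Galois-invariant vectors in the twisted representation; extending the Moonen projector from $\mathcal{E}^r/U$ to the compactification $X$ may also demand some technical care in the spreading-out argument.
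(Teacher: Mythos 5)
Your proposal routes through \cite[Theorem 5.3]{Jannsen2007}, which the paper's introduction flags as a viable alternative; however, the paper's actual proof is more elementary. It passes to a finite extension of $K$ over which $E$ becomes isogenous to a constant elliptic curve $E'$ defined over $\F_q$ (lemma \ref{lemConstAfterBaseChange}), invokes the full Tate conjecture for $E'^g \times_{\F_q} C$ (corollary \ref{corTateForEg}) and hence the injectivity of the cycle class map (lemma \ref{lemTateEquivs}), observes there are no Galois invariants in the relevant twist of cohomology (lemma \ref{lemNoInvariants}), and concludes that $\CH^i(\mf{h}(C)\otimes_\Q \otimes_F^g \mf{h}^1(E')) = 0$; this group surjects onto $\CH^i(\otimes_F^g\mf{h}^1(E))$ by the localization sequence, exactly as in lemma \ref{lemConcentrationOfCH}.

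There is a genuine error in your reasoning. The premise ``since $E$ is ordinary and non-isotrivial'' is false: every CM elliptic curve over a global function field is isotrivial, becoming isogenous to a constant curve after a finite extension (lemma \ref{lemConstAfterBaseChange}). Recognizing this is the essential first step. It makes your abelian scheme $\mathcal{E}/U$ a constant family after a finite extension, so the compactification $X$ is simply $E'^g \times_{\F_q} C$, the Moonen projector needs no extension across the boundary (it lives on $E'^g$), and Theorem \ref{thmTate} applies directly to this product. The part you flag as the ``main obstacle'' --- ruling out $\chi^{\otimes r} = q^j$ --- is not resolved by any generic generation of $F$ over $\Q$: for a constant $E'$, every $\chi(\Frob_x)$ is a power of a single fixed Weil $q$-integer $\alpha$. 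Instead one uses the purely arithmetic lemma \ref{lemFrobEigenvalue}(ii): the ideals $\alpha\mc{O}_F$ and $\ol{\alpha}\mc{O}_F$ are coprime powers of distinct primes above $p$, so $\alpha^r$ is never a power of $q$ for $r>0$. With isotriviality in hand, your route via Jannsen's criterion would indeed go through (and even yield torsion of the higher motivic cohomology of $E^g$, as the paper remarks), but the paper's localization argument reaches the stated Chow-group vanishing with considerably less machinery.
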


    \begin{rem}
        Jannsen previously used known cases of the Tate conjecture to deduce related results for the generic fiber of $\pi: X \to C$, where $X$ is a product of elliptic curves and copies of $\mathbb{P}^1$ over a finite field, and $\pi$ is the projection onto one of the factors \cite[Remarks 5.2]{Jannsen2007}. 
    \end{rem}
	\par
	In view of Theorem \ref{thmChVanishOverFuncFieldIntro}, it is notable that the Chow groups of powers of CM elliptic curves can behave very differently in characteristic zero. For an ordinary CM elliptic curve $E$ over an arbitrary field $k$, we show that $\CH^2_{\alg}(\otimes_F^3 \mf{h}^1(E))$ (the Chow group modulo algebraic equivalence) is the numerical Griffiths group $\Griff(E^3)$, defined to consist of numerically trivial 1-cycles in $E^3$ modulo algebraically trivial ones (with rational coefficients). Bloch's study of the Jacobian of the Fermat quartic then shows that this group can be nonzero over a number field.
	\par
	In \cite[page 9]{Moonen2024a}, Moonen asked whether for an ordinary CM elliptic curve $E$ over an arbitrary field $k$, we have that $\CH^i(\otimes_F^g \mf{h}^1(E)) = 0$ for all $g > 0$ and all $i \neq g$. As discussed, the answer to this question is negative in characteristic zero. In the case where $k$ has transcendence degree $\le 1$ over $\F_p$, Theorem \ref{thmChVanishOverFuncFieldIntro} quickly implies that the answer is positive. We show in Corollary \ref{corGeneralConcentrationOfCH} that if one assumes the agreement of rational and homological equivalence (up to torsion) on smooth projective varieties over $\F_p$, then the Chow groups in question vanish when $k$ is any field of characteristic $p$. If $\CH^i(\otimes_F^g \mf{h}^1(E)) = 0$ for all $g > 0$ and all $i \neq g$, then algebraic and numerical equivalence agree up to torsion on powers of $E$ (Corollary \ref{corAlgEqualsNumForEg}).
	
	\subsection*{Structure of the paper}
	
	In Section 2 of the paper we review some well-known material on Chow groups and motives. Section 3 states Jannsen's generalizations of the conjectures of Tate and Beilinson, and proves some sufficient conditions which imply them. In Section 4 we prove Theorem \ref{thmBBCriterionIntro}, then collect some of its consequences (including Geisser's theorem). Our arguments here are largely based on an analysis of the Leray spectral sequence, using Deligne's $\ell$-adic decomposition theorem for smooth projective maps (recalled in Lemma \ref{lemDecomp}) and the weight filtration on $\ell$-adic homology constructed in \cite{Jannsen1990}.
	\par
	The rest of the paper is concerned with powers of CM elliptic curves. Section 5 reviews some general facts about such curves. Section 6 constructs the decomposition of the motive $\mf{h}(E^g)$, for $E$ an ordinary elliptic curve with CM by an imaginary quadratic field $F$, and uses it to prove Theorem \ref{thmTate}. The Beilinson--Bloch conjecture for a power of a CM elliptic curve over a global function field follows shortly. This section also proves a couple of general results on the structure of the motivic decomposition, which hold over an arbitrary field: we show that the decomposition satisfies a certain uniqueness property, and that $\CH^\bullet(\otimes_F^g \mf{h}^1(E))$ consists of homologically trivial cycles for all $g \ge 1$. We then treat Moonen's question.
	\par
	In an appendix, we compute the transcendental motives of the square and cube of an ordinary CM elliptic curve $E$ over an arbitrary field, in the sense of \cite{Kahn2021}. These are certain summands of $\mf{h}(E^2)$ and $\mf{h}(E^3)$, which respectively split off the Albanese kernel and the Griffiths group from the Chow group. As a corollary, we find $\Griff(E^3) = \CH^2_{\alg}(\otimes_F^3 \mf{h}^1(E))$ (though this can also be seen more directly using the decomposition of $\mf{h}(E^3)$). We note another interesting application (Corollary \ref{corCH2t3OverFuncField}), which states that for an ordinary CM elliptic curve $E$ over a field $k$, and a smooth projective variety $X$ over $k$, the pullback map $\CH^2(\mf{h}(X) \otimes_\Q \otimes_F^3 \mf{h}^1(E)) \to \CH^2(\otimes_F^3 \mf{h}^1(E_{k(X)}))$ is an isomorphism.
	
	\subsection*{Acknowledgements}
	I am grateful to Ben Moonen for his course at the 2024 Arizona Winter School, and for generous correspondence and advice during the writing of this paper. I thank the fellow members of the AWS project group which was the impetus for it: Beatriz Barbero, Andrea Bourque, Michael Maex, Waleed Qaisar, Peikai Qi, and Sjoerd de Vries, and the project leads, Jef Laga and Ziquan Yang. I benefited from helpful discussions with Alexander Beilinson, Bruno Kahn, Kiran Kedlaya, and Shubhankar Sahai. I also thank Jennifer Balakrishnan, Alexander Bertoloni-Meli, David Rohrlich, Padma Srinivasan, and Jared Weinstein for thoughtful comments on earlier drafts. I am indebted to Jannsen's book \cite{Jannsen1990}, which inspired many of the ideas of this paper.
	
	\subsection*{Notation and conventions}
	
	For a field $k$, we let $\overline{k}$ and $k_s$ respectively denote choices of algebraic and separable closures of $k$. We set $G_k = \Gal(k_s / k)$. By $\ell$ we will always mean a fixed prime not equal to the characteristic of the base field. A variety over $k$ is an integral, separated, finite-type $k$-scheme.
	\par
	For schemes $X, \ Y, \ Z$ and morphisms $f: X \to Z$, $g: Y \to Z$, we will often denote by $f_Y: X_Y = X \times_Z Y \to Y$ the base change of $f$ along $g$ (with the dependence on $g$ left implicit). When $X, \ Z$, and $f$ are over a field $k$, we write $\ol{X}$, $\ol{Z}$ and $\ol{f}: \ol{X} \to \ol{Z}$ for their respective base changes to the algebraic closure of $k$. For a sheaf $\mc{F}$ on $X$, we write $\ol{\mc{F}}$ for the pullback to $\ol{X}$. When $X$ is integral, $k(X)$ denotes its function field.
	\par
	For a prime power $q$, a Weil $q$-number of weight $w$ is an algebraic number $\alpha$ whose image under each embedding $\Q(\alpha) \to \C$ has absolute value $q^{w / 2}$. A Weil $q$-integer of weight $w$ is a Weil $q$-number of weight $w$ which is an algebraic integer. For a nonzero polynomial $P(t)$ with coefficients in a field $L$, and $\beta \in \ol{L}$, we write $\ord\limits_{t = \beta} P(t)$ for the number of times $t - \beta$ divides $P(t)$.
	\par
	Our main references for intersection theory and motives are \cite{Fulton1998} and \cite{Andre2004}, respectively (see \cite[Tag 0FG9]{stacks-project} for a condensed reference). For a finite-type, separated scheme $X$ over a field $k$, and a commutative ring $R$, the notation $Z_i(X)_R$ denotes the free $R$-module on the $i$-dimensional closed subvarieties of $X$. When each connected component of $X$ is pure-dimensional, so that the codimension of a closed subvariety of $X$ is well-defined, $Z^i(X)_R$ denotes the free $R$-module on the codimension-$i$ closed subvarieties of $X$. By $\CH_i(X)_R$ (resp. $\CH^i(X)_R$) we mean the Chow group of $X$, defined to be $Z_i(X)_R$ (resp. $Z^i(X)_R$) modulo rational equivalence. For $i < 0$, $\CH_i(X)_R$ and $\CH^i(X)_R$ are defined to be zero. For a closed subvariety $V$ of $X$, we let $[V]$ denote its class in $Z_\bullet(X)_R$.
	\par
	When $X$ is a smooth projective variety, and $\sim$ an adequate equivalence relation, $\CH_i^\sim(X)_R$ denotes the quotient of $\CH_i(X)_R$ by $\sim$. We similarly define $\CH^i_\sim(X)_R$. By $\mc{P}(k)$ we mean the category of smooth projective schemes over $k$. By $\mc{M}_\sim(k)_R$ we mean the category of pure motives over $k$, modulo $\sim$ and with coefficients in $R$. For a morphism $f: X \to Y$ in $\mc{P}(k)$, we write $\Gamma_f$ for the graph of $f$ in $\CH^\bullet(X \times_k Y)$. For a cycle $c \in \CH^i(X \times_k X)$, we denote by $c^t$ the pullback of $c$ under the transpose map. This gives rise to a contravariant functor $X \mapsto \mf{h}(X): \mc{P}(k)^{\operatorname*{op}} \to \mc{M}_\sim(k)_R$. For a motive $M \in \mc{M}_{\sim}(k)_R$ and adequate equivalence relation $\sim'$ which is coarser than $\sim$, we write $M_{\sim'} \in \mc{M}_{\sim'}(k)_R$ for the reduction of $M$ modulo $\sim'$. For $n \ge 0$, we write $n \cdot M$ for the direct sum of $n$ copies of $M$.
	\par
	The adequate equivalence relations we consider are rational equivalence, algebraic equivalence, homological equivalence (with respect to $\ell$-adic cohomology), and numerical equivalence, denoted by the respective shorthands $\rat, \ \alg,$  $\hom,$ and $\num$. Note that the former three equivalence relations also make sense for cycles on any finite-type separated $k$-scheme $X$ (though when $X$ is not smooth, one must define homological equivalence via a cycle class map to \'etale homology, rather than cohomology), and numerical equivalence makes sense when $X$ is additionally assumed to be smooth and proper. We use the same notation $\CH_i^\sim(X)_R$ in these cases. When $\sim$ is rational equivalence, we typically omit the subscript $\sim$ from all notation relating to Chow groups and motives. Similarly, when $R = \Q$, we omit the subscript $R$. Note that this differs from  \cite{Fulton1998}, which drops the subscript $R$ when $R = \Z$, but as we hardly consider Chow groups with integral coefficients at all, our convention seems sensible. 
	\par
	For $X \in \mc{P}(k)$, the subgroup $\CH^i(X)_0 \subseteq \CH^i(X)$ is defined to be the kernel of the $\ell$-adic cycle class map $\CH^i(X) \to H^{2i}(\ol{X}, \Q_\ell(i))$. For $M \in \mc{M}(k)$, $\CH^i(M)_0$ is defined similarly. We write $\CH^i(X)_{0, \Z}$ for the kernel of the $\ell$-adic cycle class map to $H^{2i}(\ol{X}, \Q_\ell(i))$ evaluated on the Chow group with integral coefficients. We denote $\ell$-adic cohomology of varieties and motives by $H^i(\ol{X}, \Q_\ell)$ and $H^i(M, \Q_\ell)$, respectively. 
	\par
	When $X$ is a smooth, proper, and geometrically integral variety over $k$, we write $\Alb(X)$ for the Albanese variety of $X$, and $\Pic^0(X)_{\red}$ for the reduced subscheme of the connected component of the Picard scheme of $X$. For such $X$, $\Alb(X)$ and $\Pic^0(X)_{\red}$ are abelian varieties, dual to each other \cite[page 3]{Wittenberg2008}. By $\GrpSch(k)$ we mean the category of group schemes over $k$. 
    \par
	When $k \cong \F_q$ is finite, the symbol $\Frob_q \in G_{\F_q}$ denotes the geometric Frobenius sending $x \mapsto x^{1/q}$. The action of $\Frob_q$ on the $\ell$-adic cohomology of any $\F_q$-scheme $X$ agrees with the action via pullback of the Frobenius endomorphism $\Frob_X$ of $X$. For any correspondence $\pi$ between smooth projective schemes $X$, $Y$ over $k$, $\pi \circ \Frob_X = \Frob_Y \circ \pi$, so any motive $M \in \mc{M}_{\sim}(k)_R$ has a canonical Frobenius endomorphism \cite[Exercices 4.1.4.1]{Andre2004}.

	\section{Intersection theory and motives}
	We recall some well-known facts from intersection theory and the theory of motives which we will use in the body of the paper. The material on motives (starting in Subsection \ref{ssecCKDecomp}) is used sparingly until the sections on CM elliptic curves, so the reader who is only interested in the earlier parts of the paper can skim or skip it without much difficulty. Let $k$ be a field.
	\par
	\subsection{Properties of Chow groups}
	For $i: Z \to X$ a closed immersion of finite-type separated schemes over $k$, $j: U \to X$ the complementary open immersion, $R$ a commutative ring, and $n \in \Z$, there is a \enquote{localization} exact sequence
	\begin{gather*}
		\CH_n(Z)_R \xrightarrow[]{i_*} \CH_n(X)_R \xrightarrow[]{j^*} \CH_n(U)_R \to 0,
	\end{gather*}
	cf. \cite[Proposition 1.8]{Fulton1998}. 
	\par
	Let $Y$ be another finite-type separated $k$-scheme. Suppose that $Y$ is integral with generic point $\eta$, and $f: X \to Y$ is a $k$-morphism. In the colimit over all closed subschemes properly contained in $Y$, the localization exact sequence yields another exact sequence
	\begin{gather}\label{eqLocalization}
		\bigoplus\limits_{\substack{Z \subsetneq Y \textrm{ closed}}} \CH_n(X_Z)_R \to \CH_n(X)_R \to \colim\limits_{\emptyset \neq U \subseteq Y \textrm{ open}} \CH_n(X_U)_R \to 0.
	\end{gather}
	
	The next lemma is well-known. We omit the proof, which is elementary (see e.g. the related \cite[Exercise 1.13]{MVW2006}).
	
	\begin{lem}
		The map sending a closed subvariety of $X$ to its generic fiber induces an isomorphism
		\begin{gather*}
			\colim\limits_{\emptyset \neq U \subseteq Y \textrm{ open}} \CH_n(X_U)_R \cong \CH_{n - \dim Y}(X_\eta)_R.
		\end{gather*}
	\end{lem}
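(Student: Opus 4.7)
I would first prove the analogous statement at the level of cycle groups, then check compatibility with rational equivalence. For each nonempty open $W \subseteq Y$, flat base change along $\eta \to W$ gives a pullback $Z_n(X_W)_R \to Z_{n-\dim Y}(X_\eta)_R$ sending a closed subvariety $V \subseteq X_W$ of dimension $n$ to $[V_\eta]$ if $V$ dominates $W$ (so that $V_\eta$ has dimension $n - \dim Y$ by a standard dimension count), and to zero otherwise. These maps are compatible with the restriction maps $\CH_n(X_W)_R \to \CH_n(X_{W'})_R$ for $W' \subseteq W$, and so assemble into a map $\varphi$ out of the colimit. For an inverse at the cycle level, any closed subvariety $V \subseteq X_\eta$ of dimension $n - \dim Y$ is the generic fiber of its scheme-theoretic closure $\overline{V} \subseteq X$, which is an $n$-dimensional subvariety of $X$ dominating $Y$; and any $n$-dimensional subvariety of $X_W$ not dominating $W$ is supported over a proper closed subset of $W$, hence becomes zero after restricting to a smaller open $W'$. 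This yields the analogous isomorphism on cycle groups.

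The main step is to check that $\varphi$ descends to an isomorphism on Chow groups. Compatibility with rational equivalence in the forward direction is straightforward: a principal divisor $\operatorname{div}(\phi)$ on an $(n+1)$-dimensional closed subvariety $T \subseteq X_W$ with $T$ dominating $W$ pulls back to $\operatorname{div}(\phi|_{T_\eta})$, while $T$'s not dominating $W$ contribute trivially after shrinking $W$. For injectivity, suppose $\alpha \in Z_n(X_W)_R$ has $\varphi(\alpha) = \sum_i \operatorname{div}(\phi_i)$ in $Z_{n-\dim Y}(X_\eta)_R$, for finitely many rational functions $\phi_i$ on $(n-\dim Y + 1)$-dimensional closed subvarieties $T_i \subseteq X_\eta$. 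Each $T_i$ is the generic fiber of its closure $\overline{T_i} \subseteq X$, and each $\phi_i$ extends to a rational function $\overline{\phi_i}$ on $\overline{T_i}$; then $\alpha - \sum_i \operatorname{div}(\overline{\phi_i}) \in Z_n(X)_R$ has empty generic fiber as a cycle, and so by the cycle-level statement it is supported over a proper closed subset $Z \subsetneq Y$. Restricting to any $W' \subseteq W \setminus Z$ makes $\alpha|_{W'}$ rationally trivial in $\CH_n(X_{W'})_R$.

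The technical heart of the argument is that only finitely many components of $\alpha$ and finitely many rational functions $\phi_i$ appear, so a single $W'$ can be chosen small enough that the supports of $\operatorname{div}(\overline{\phi_i})$ behave well there and every non-dominating component of $\alpha$ is excluded. This is the step that requires the most care, though it is essentially routine once organized properly. Both ends of the argument — the closure map $V \mapsto \overline{V}$ and the generic fiber map $V \mapsto V_\eta$ — can then be read off as the inverse bijection on generators, completing the proof.
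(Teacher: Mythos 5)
Your proof is correct and takes the natural route. Note that the paper itself does not prove this lemma: it states only that the result is ``standard and straightforward to prove, but we do not have a reference.'' So there is no paper proof to compare against, and your argument supplies the details the paper leaves implicit.

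A few minor remarks on your write-up, none of which affect validity. With $R$-coefficients, the relation $\varphi(\alpha) = \sum_i \operatorname{div}(\phi_i)$ should really be $\varphi(\alpha) = \sum_i r_i\operatorname{div}(\phi_i)$ for finitely many $r_i \in R$; carrying coefficients through changes nothing. In the injectivity step you form $\alpha - \sum_i\operatorname{div}(\overline{\phi_i})$ inside $Z_n(X)_R$ when $\alpha$ lives in $Z_n(X_W)_R$; this is harmless provided you implicitly replace $\alpha$ by the closure of its components in $X$ (or equivalently restrict the $\operatorname{div}(\overline{\phi_i})$ to $X_W$). Finally, your remark that non-dominating $T$'s ``contribute trivially after shrinking $W$'' is slightly stronger than needed: such $T$'s already have $\operatorname{div}(\phi)_\eta = 0$ with no shrinking at all, since the support of $\operatorname{div}(\phi)$ lies in $T$ and hence avoids the generic fiber. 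The key facts you invoke tacitly --- that flat pullback along $\eta \to W$ commutes with $\operatorname{div}$, and that the generic fiber of the closure of $V \subseteq X_\eta$ recovers $V$ --- are both standard (\cite[Lemma 1.7.1 and Proposition 1.7]{Fulton1998} cover the former), and the finiteness of the data involved is exactly what lets a single shrunk $W'$ work, as you observe.
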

	
	We note some nice properties under certain conditions of the pullback map on Chow groups associated to a map of schemes.
	\begin{lem}\label{lemCHInjectsUnderBaseExt}
		Let $X$ be a finite-type, smooth, separated scheme over $k$. Let $\sim$ be either rational, algebraic, homological, or (if $X$ is proper) numerical equivalence.
		\begin{enumerate}[(i)]
			\item For $L / k$ an arbitrary field extension, the pullback map $\CH^i(X) \to \CH^i(X_L)$ is injective. 
			\item When $L / k$ is algebraic, the pullback $\CH^i_{\sim}(X) \to \CH_{\sim}^i(X_L)$ is injective. If $L / k$ is purely inseparable, the pullback map is an isomorphism modulo $\sim$.
			\item If $X$ is quasi-projective, and $f: Y \to X$ is a finite Galois cover, then $f^*: \CH^i_\sim(X) \to \CH^i_\sim(Y)$ is injective, and $f^*(\CH^i_\sim(X)) = \CH^i_\sim(Y)^{\Gal(Y / X)}$.
		\end{enumerate}
	\end{lem}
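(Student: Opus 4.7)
The plan is to establish the three parts in sequence, in each case relying on the transfer identity $f_* f^* = \deg(f) \cdot \id$ for a finite flat morphism $f$, the localization sequence \eqref{eqLocalization}, and (in part (i)) homotopy invariance.

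For (i), I would first reduce to finitely generated $L/k$: writing $L$ as the filtered colimit of its finitely generated sub-extensions $L_\alpha$, exactness of filtered colimits applied to \eqref{eqLocalization} yields $\CH^i(X_L) = \colim_\alpha \CH^i(X_{L_\alpha})$. Factoring a finitely generated $L$ as a finite extension of a purely transcendental one reduces to two cases: $L/k$ finite, where $\pi_* \pi^* = [L:k] \cdot \id$ forces injectivity with rational coefficients; and $L = k(t)$, where homotopy invariance identifies $\pi_1^* \colon \CH^i(X) \xrightarrow{\sim} \CH^i(X \times \mbb{A}^1)$ with left inverse $i_0^*$, and the localization sequence realizes $\CH^i(X_{k(t)})$ as the quotient of $\CH^i(X \times \mbb{A}^1)$ by the image of $\bigoplus_{p \in \mbb{A}^1_{(0)}} \CH^{i-1}(X_{\kappa(p)})$ under pushforward. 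If $\alpha$ lies in the kernel of $\CH^i(X) \to \CH^i(X_{k(t)})$, then $\pi_1^* \alpha$ belongs to this image; applying $i_0^*$ yields $\alpha = 0$, using the vanishing $i_0^* \circ i_{p,*} = 0$ for every closed point $p$ (by disjointness of fibers when $p \neq 0$, and by the triviality of the normal bundle of $X \times \{0\}$ in $X \times \mbb{A}^1$ when $p = 0$).

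For (ii), the same colimit argument (now over finite sub-extensions of the algebraic $L/k$) reduces to $L/k$ finite, where injectivity modulo $\sim$ is again $\pi_* \pi^* = [L:k] \cdot \id$. For $L/k$ finite purely inseparable, I would obtain surjectivity of $\pi^*$ already at the cycle level: each closed subvariety $W \subseteq X_L$ equals $(Z_L)_{\mathrm{red}}$ for the closed subvariety $Z := \pi(W) \subseteq X$ (as $\pi$ is a universal homeomorphism), and the flat pullback satisfies $\pi^*[Z] = [Z_L] = e_W \cdot [W]$ for a positive integer $e_W$ equal to the length of $\mc{O}_{Z_L}$ at the generic point of $W$, so $[W] = \pi^*(e_W^{-1} [Z])$ lies in $\pi^* Z^i(X)_\Q$.

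For (iii), with $G := \Gal(Y/Z)$, the cover $f$ is finite étale of degree $|G|$, so $f_* f^* = |G| \cdot \id$ yields injectivity. The image description follows from the identification $Y \times_Z Y \cong \coprod_{\sigma \in G} Y$ combined with base change, which gives $f^* f_* = \sum_{\sigma \in G} \sigma^*$ on $\CH^i_\sim(Y)$; the inclusion $\im(f^*) \subseteq \CH^i_\sim(Y)^G$ is then immediate, and the averaging formula $\alpha = f^*\bigl(|G|^{-1} f_* \alpha\bigr)$ for $\alpha \in \CH^i_\sim(Y)^G$ gives the converse. The main subtlety is the vanishing $i_0^* \circ i_{p,*} = 0$ in part (i) when $p = 0$, which relies on the refined Gysin pullback for the regular closed immersion $X \times \{0\} \hookrightarrow X \times \mbb{A}^1$ reducing to multiplication by the first Chern class of the (trivial) normal bundle; the rest is formal push-pull manipulation.
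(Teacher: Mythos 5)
Your proof is correct, and it supplies the explicit arguments that the paper delegates entirely to citations: the paper's proof consists of references to Bloch's Lemma 1A.3 (and \cite[Lemma 2.1]{LS2024}) for (i), the Stacks project for (ii), and Fulton's Example 1.7.6 for (iii), and your reductions (colimit to finitely generated, norm argument $f_* f^* = \deg(f)\cdot\id$ for finite extensions, localization plus $i_0^!$-specialization for $k(t)/k$, universal-homeomorphism cycle-level surjectivity for purely inseparable, and the push-pull identity $f^* f_* = \sum_{\sigma \in G}\sigma^*$ for the Galois case) are precisely the standard arguments underlying those references. The one small quibble is cosmetic: by homotopy invariance $\pi_1^*$ is an isomorphism, so $i_0^*$ is its genuine inverse rather than merely a left inverse, but this does not affect the argument.
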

	\begin{proof}
		Claim (i) follows from the argument of \cite[Lemma 2.1]{LS2024}. This uses the refined Gysin homomorphism of \cite[Chapter 6.2]{Fulton1998}; since a closed immersion of finite-type smooth schemes is a regular embedding \cite[Appendix B.7.2]{Fulton1998}, the Gysin homomorphism is well-defined in the generality needed here. Claim (ii) follows from the argument of \cite[Tag 0FH8]{stacks-project}, together with the fact that if $L / k$ is purely inseparable, the map $X_L \to X$ is a universal homeomorphism. Claim (iii) follows from \cite[Example 1.7.6]{Fulton1998}.
	\end{proof}
	
	\begin{lem}\label{lemAdEqRelsOnDivisors}
		For a smooth, proper variety $X$ over a field $k$, the $\ell$-adic cycle class map is injective on ${\CH^1_{\alg}(X) \otimes_\Q \Q_\ell}$, and algebraic and numerical equivalence of divisors agree up to torsion, i.e. $\CH^1_{\alg}(X) = \CH^1_{\num}(X)$.
	\end{lem}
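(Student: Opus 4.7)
The plan is to identify $\CH^1_{\alg}(X)$ with $\NS(X) \otimes_\Z \Q$ and deduce both assertions from classical facts about divisors on smooth proper varieties. Since $X$ is smooth, $\CH^1(X)_\Z = \Pic(X)$; since $X$ is proper, the Picard scheme exists and the connected component $\Pic^0(X)_{\red}$ is an abelian variety, so algebraic triviality of a divisor class is equivalent to lying in the image of its $k_s$-points modulo the action of $G_k$. This gives $\CH^1_{\alg}(X) = \NS(X) \otimes_\Z \Q$, which is finite-dimensional by the Théorème de la base. By lemma \ref{lemCHInjectsUnderBaseExt}(ii), the pullback maps $\CH^1_{\alg}(X) \hookrightarrow \CH^1_{\alg}(\ol{X})$ and $\CH^1_{\num}(X) \hookrightarrow \CH^1_{\num}(\ol{X})$ are injective, and the cycle class map on $X$ factors through that on $\ol{X}$, so both claims will reduce to the case $k = \ol{k}$.

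For the first assertion I would apply the Kummer sequence $1 \to \mu_{\ell^n} \to \mathbb{G}_m \to \mathbb{G}_m \to 1$ on $\ol{X}_{\et}$ to obtain short exact sequences
\begin{gather*}
0 \to \Pic(\ol{X})/\ell^n \to H^2_{\et}(\ol{X}, \mu_{\ell^n}) \to \mathrm{Br}(\ol{X})[\ell^n] \to 0.
\end{gather*}
Since $\Pic^0(\ol{X})$ is $\ell$-divisible (being the $\ol{k}$-points of an abelian variety), $\Pic(\ol{X})/\ell^n = \NS(\ol{X})/\ell^n$. The inverse system in $n$ satisfies Mittag--Leffler, so taking limits and tensoring with $\Q$ gives an injection $\NS(\ol{X}) \otimes_\Z \Q_\ell \hookrightarrow H^2(\ol{X}, \Q_\ell(1))$; composing with $\CH^1_{\alg}(X) \otimes_\Q \Q_\ell \hookrightarrow \NS(\ol{X}) \otimes_\Z \Q_\ell$ from the previous paragraph yields the desired injectivity.

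For $\CH^1_{\alg}(X) = \CH^1_{\num}(X)$ I plan to invoke Matsusaka's theorem, which asserts that algebraic and numerical equivalence of divisors agree up to torsion on a smooth projective variety over an algebraically closed field. A numerically trivial class in $\CH^1_{\alg}(X)$ pulls back to a numerically trivial class in $\CH^1_{\alg}(\ol{X})$, which vanishes by Matsusaka, and hence vanishes over $k$ by the injectivity of base change established above. The main obstacle will be that the statement is for smooth proper rather than smooth projective $X$: this requires either reducing to the projective case via Chow's lemma together with an elementary descent for divisor classes, or citing a version of Matsusaka's theorem valid in the smooth proper setting.
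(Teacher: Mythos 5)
Your proposal follows essentially the same route as the paper: reduce to the algebraically closed case via lemma \ref{lemCHInjectsUnderBaseExt}, use the Kummer sequence plus $\ell$-divisibility of $\Pic^0$ and the Th\'eor\`eme de la base to get injectivity of the cycle class map on $\NS \otimes \Q_\ell$, and invoke the classical coincidence of algebraic and numerical equivalence for divisors. The only substantive difference is the reference you reach for in the last step. The gap you correctly flag --- that Matsusaka's theorem as usually stated is for smooth projective varieties, whereas the lemma requires smooth proper --- is precisely what the paper sidesteps by citing SGA6, Expos\'e XIII, Th\'eor\`eme 4.6, which establishes $\CH^1_{\alg} = \CH^1_{\num}$ (modulo torsion) in the generality needed here. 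So rather than routing through Chow's lemma and a descent argument, the fix is simply to cite the more general reference. With that substitution your argument is complete.

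One minor stylistic remark: rather than passing to $\varprojlim_n \NS(\ol{X})/\ell^n$ and tensoring with $\Q$, it is a little cleaner to observe directly that since $\NS(\ol{X})$ is finitely generated, the limit is $\NS(\ol{X}) \otimes_\Z \Z_\ell$ and it injects into $H^2(\ol{X}, \Z_\ell(1))$; this is how the paper phrases it.
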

	\begin{proof}
		By Lemma \ref{lemCHInjectsUnderBaseExt}, we may assume that $k$ is algebraically closed. The N\'eron-Severi group $\NS(X) = \CH^1_{\alg}(X)_{\Z}$ of $X$ is finitely generated by \cite[Expos\'e XIII, Th\'eor\`eme 5.1]{SGA6}. As $\Pic^0(X)(k)$ is $\ell$-divisible, the Kummer sequence implies that the cycle class map induces an injection
		\begin{gather*}
			\NS(X) \otimes_\Z \Z_\ell \to H^2(X, \Z_\ell(1)).
		\end{gather*}
		That $\CH^1_{\alg}(X) = \CH^1_{\num}(X)$ is \cite[Expos\'e XIII, Th\'eor\`eme 4.6]{SGA6}.
	\end{proof}
	
	\subsection{Chow--K\"unneth decompositions}\label{ssecCKDecomp}
	\begin{defn}
		A \textbf{Chow--K\"unneth decomposition} of a motive $M \in \mc{M}(k)$ (with respect to a classical Weil cohomology theory $H$) consists of an isomorphism 
        \begin{gather*}
            M \cong \bigoplus\limits_{i \in \Z} M_i
        \end{gather*}
        such that for all $i, j \in \Z$ with $i \neq j$, $H^j(M_i) = 0$.
	\end{defn}
	See \cite[\hphantom{}3.4]{Andre2004} for the definition of a classical Weil cohomology theory. By Th\'eor\`eme 4.2.5.2 and Remarque 5.1.1.2 of loc. cit., the definition does not depend on the choice of $H$, so we will simply speak of Chow--K\"unneth decompositions, with no reference to $H$.
	\par
	When $X$ is a smooth projective scheme over $k$ of dimension $d$, we denote the summands in a Chow--K\"unneth decomposition of $\mf{h}(X)$ via 
	\begin{gather*}
		\mf{h}(X) \cong \bigoplus\limits_{i=0}^{2d} \mf{h}^i(X).
	\end{gather*}
    Deninger--Murre showed in \cite[Proposition 3.3]{DM1991} that motives of abelian varieties possess a canonical Chow--K\"unneth decomposition, such that when $X,\ Y$ are abelian varieties over $k$, the canonical identification
	\begin{gather*}
		\mf{h}(X) \otimes_\Q \mf{h}(Y) \cong \mf{h}(X \times_k Y)
	\end{gather*}
	induces an isomorphism
	\begin{gather*}
		\bigoplus\limits_{i + j = n} \mf{h}^i(X) \otimes_\Q \mf{h}^j(Y) \cong \mf{h}^n(X \times_k Y). 
	\end{gather*}
	\par
	For any smooth projective variety $X$ over $k$ of dimension $d$, and any ample divisor $\xi \in \CH^1(X)$,  there exist orthogonal projectors $\pi_0,\ \pi_1,\ \pi_{2d-1},\ \pi_{2d} \in \CH^d(X \times_k X)$, which when $d \ge 2$ determine a decomposition 
	\begin{gather}\label{eqPartialCKDecomp}
		\mf{h}(X) \cong \mf{h}^0(X) \oplus \mf{h}^1(X) \oplus M \oplus \mf{h}^{2d-1}(X) \oplus \mf{h}^{2d}(X),
	\end{gather}
	as constructed as in \cite[\hphantom{}1.13 and Section 4]{Scholl1994} (here we use the zero-cycle $\xi^d$ as input to the construction of loc. cit. 1.13). When $d = 1$, so $X$ is a curve, the same construction yields a Chow--K\"unneth decomposition
	\begin{gather*}
		\mf{h}(X) \cong \mf{h}^0(X) \oplus \mf{h}^1(X) \oplus \mf{h}^{2}(X).
	\end{gather*}
	When $X$ is geometrically integral, the decomposition \eqref{eqPartialCKDecomp} satisfies $\mf{h}^1(X) \cong \mf{h}^1(\Pic^0(X)_{\red})$, where $\Pic^0(X)_{\red}$ is the Picard variety and $\mf{h}^1(\Pic^0(X)_{\red})$ is the degree 1 component of its canonical Chow--K\"unneth decomposition.
	
	\subsection{Motivic algebra}
	Let $X,\ Y$ be smooth projective varieties over $k$, with $X$ geometrically integral. Form decompositions of $\mf{h}(X)$ and $\mf{h}(Y)$ of the form \eqref{eqPartialCKDecomp}, and let $\mf{h}^1(X)$, $\mf{h}^1(Y)$ be their respective degree one components. The next lemma is a partial form of \cite[Lemma 2.1]{Kahn2021}. 
	\begin{lem}\label{lemMotiveCalcs}
		The following are true:
		\begin{enumerate}[(i)]
			\item $\CH^1(\mf{h}^1(X)) = \CH^1(X)_0 \cong \Pic^0(X)(k) \otimes_\Z \Q$;
			\item When $Y$ is geometrically integral, for any adequate equivalence relation $\sim$,
			\begin{gather*}
				\Hom_{\mc{M}_\sim(k)}(\mf{h}^1(X), \mf{h}^1(Y)) \cong \Hom_{\GrpSch(k)}(\Pic^0(X)_{\red}, \Pic^0(Y)_{\red}) \otimes_\Z \Q;
			\end{gather*}
			\item As abelian groups, $\Hom_{\mc{M}(k)}(\mf{h}^1(X), \mf{h}(Y)) \cong \Hom_{\mc{P}(k)}(Y, \Pic^0(X)_{\red}) \otimes_\Z \Q$.
		\end{enumerate}
	\end{lem}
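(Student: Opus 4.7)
The plan is to reduce each of the three statements to a classical fact about motives of abelian varieties, using the isomorphism $\mf{h}^1(X) \cong \mf{h}^1(\Pic^0(X)_{\red})$ recorded in the excerpt (and applied to both $X$ and $Y$ in parts (ii) and (iii)).

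For (i), I decompose $\CH^1(X)_\Q$ using the partial Chow--K\"unneth decomposition \eqref{eqPartialCKDecomp}. A direct dimension count using the explicit form of Scholl's projectors $\pi_0$ and $\pi_{2g}$ (each supported on a cycle of extremal dimension in $X \times_k X$) shows they act trivially on $\CH^1$, while the summand $M$ together with $\pi_{2g-1}$ contains the N\'eron--Severi part $\NS(X) \otimes \Q$, leaving the $\mf{h}^1(X)$-summand to capture the algebraically trivial divisors $\CH^1(X)_0$. Combining this with lemma \ref{lemAdEqRelsOnDivisors} (which identifies $\CH^1(X)_0$ with $\CH^1(X)_{\alg}$ rationally) and the definition of $\Pic^0(X)_{\red}$ yields the claimed isomorphism.

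For (ii), substituting $\mf{h}^1 \cong \mf{h}^1 \circ \Pic^0(-)_{\red}$ on both sides reduces the claim to the classical identification $\Hom_{\mc{M}_\sim(k)}(\mf{h}^1(A), \mf{h}^1(B)) \cong \Hom(A, B) \otimes \Q$ for abelian varieties $A$ and $B$. The graph map $\Hom(A, B) \otimes \Q \to \pi_1^B \cdot \CH^{\dim A}(A \times_k B) \cdot \pi_1^A$ is injective because a nonzero morphism of abelian varieties acts nontrivially on $H^1$ and so its graph is nonzero modulo numerical equivalence, and surjective by a standard Deninger--Murre analysis of $\pi_1$-symmetric correspondences. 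Independence of $\sim$ then follows because injectivity modulo numerical equivalence implies it modulo any finer $\sim$.

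For (iii), I would decompose $\mf{h}(Y)$ into its Chow--K\"unneth summands and prove the vanishing $\Hom_{\mc{M}(k)}(\mf{h}^1(X), \mf{h}^i(Y)) = 0$ for $i \neq 1$. The extremal cases $i \in \{0, 2 \dim Y\}$ follow from Beauville's vanishing of $\CH^0$ and $\CH_0$ on $\mf{h}^1$ of an abelian variety (so there is no nonzero map from $\mf{h}^1$ to a Tate twist of the unit). For intermediate $i$ I would invoke the full Beauville decomposition of $\CH^{\bullet}(A \times_k Y)$ with $A = \Pic^0(X)_{\red}$ and isolate the $\pi_1^A$-component. Granting this vanishing, part (ii) together with duality of abelian varieties and the universal property of the Albanese (with rational coefficients) identifies $\Hom_{\mc{M}(k)}(\mf{h}^1(X), \mf{h}(Y))$ with $\Hom_{\mc{P}(k)}(Y, \Pic^0(X)_{\red}) \otimes \Q$.

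The main difficulty will be the vanishing step in (iii) for intermediate $i$: modulo numerical equivalence it is immediate from weight considerations in $\ell$-adic cohomology, but modulo rational equivalence (as is claimed here) it requires the more refined Beauville decomposition and explicit projector calculations on products of abelian varieties.
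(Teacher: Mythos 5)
The paper offers no proof of this lemma at all: it is stated as ``a partial form of [Kahn2021, Lemma~2.1]'' and the reader is simply referred there. So there is nothing to compare against line by line, and you are attempting to reconstruct what the citation hides. Parts (i) and (ii) of your sketch are broadly in the right spirit (reduce to $\Pic^0(X)_{\red}$ and use Deninger--Murre full faithfulness), though in (i) the claim that $\pi_{2g-1}$ contributes the N\'eron--Severi part is off: $\CH^1(\mf{h}^{2g-1}(X)) = \CH^{g}(\mf{h}^1(\Alb X))$ vanishes for $g \geq 2$, so $\NS(X)\otimes\Q$ sits entirely inside $\CH^1(M)$; and one still needs the easy observation that $\CH^1(\mf{h}^1(X))$ lands in $\CH^1(X)_0$ because $H^2(\mf{h}^1(X),\Q_\ell)=0$.

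The real problem is in (iii), where there is a genuine gap. First, a general smooth projective $Y$ does not possess a full Chow--K\"unneth decomposition unconditionally; the paper only has the partial decomposition \eqref{eqPartialCKDecomp} with an undecomposed middle piece $M$, so ``decompose $\mf{h}(Y)$ into its Chow--K\"unneth summands'' is not available. Second, and more seriously, the asserted extremal vanishing for $i=0$ is false: setting $A=\Pic^0(X)_{\red}$ of dimension $g$, one has $\mf{h}^1(A)^\vee \cong \mf{h}^{2g-1}(A)(g)$, hence
\begin{gather*}
\Hom_{\mc{M}(k)}(\mf{h}^1(A), \mathbf{1}) \cong \CH^{g}(\mf{h}^{2g-1}(A)) \cong A(k)\otimes_\Z\Q,
\end{gather*}
which is typically nonzero. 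This is not a defect to be argued away; it is precisely the contribution of the constant morphisms $Y\to\Pic^0(X)_{\red}$, an essential summand of $\Hom_{\mc{P}(k)}(Y,\Pic^0(X)_{\red})\otimes\Q$. Your confusion is likely between $\CH^0(\mf{h}^1(A))=\Hom(\mathbf{1},\mf{h}^1(A))$, which does vanish, and $\Hom(\mf{h}^1(A),\mathbf{1})=\CH^0(\mf{h}^1(A)^\vee)$, which does not. If you grant your claimed vanishing, the computation would yield only $\Hom(\Pic^0 X,\Pic^0 Y)\otimes\Q$ and you would lose the constant maps, so the end of (iii) cannot close as written. A workable route avoids any decomposition of $\mf{h}(Y)$: identify $\Hom(\mf{h}^1(A),\mf{h}(Y))$ with the eigen-summand of $\CH^{g}(A\times Y)$ where $[n]_A^*\times\id_Y$ acts by $n^{2g-1}$, and use the Beauville decomposition for the abelian scheme $A\times Y/Y$ together with the seesaw-type description of $\Pic(A\times Y)$ (theorem~\ref{thmNSE2} is the analogous statement for $\NS$).
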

	In particular, $A \mapsto \mf{h}^1(A)$ is a fully faithful contravariant functor from the isogeny category of abelian varieties over $k$ to $\mc{M}_\sim(k)$. 
	
	\subsection{Motives of abelian type}
	A motive is said to be of abelian type if it belongs to the thick and rigid tensor subcategory of $\mc{M}(k)$ spanned by motives of abelian varieties over $k$. Any summand or Tate twist of the motive of an abelian variety is of abelian type. Motives of abelian type are finite-dimensional in the sense of Kimura \cite[Example 9.1]{Kimura2005}, and have Chow--K\"unneth decompositions \cite[Corollary 1.6]{Ancona2017}.
	
	\subsection{Conservativity and nilpotence}
	
	Fix a classical Weil cohomology theory $H$ on smooth projective varieties over $k$. This gives rise to a realization functor $R$ on $\mc{M}(k)$, such that $H = R \circ \mf{h}$. \par
    We will need some results from the theory of finite-dimensional motives \cite[Definition 3.7]{Kimura2005}. Let $M \in \mc{M}(k)$ be a motive, and $f: M \to M$ an endomorphism. The following lemma is immediately implied by \cite[Corollary 7.9]{Kimura2005}.
	
	\begin{lem}\label{conservativity}
		 If $M$ is finite-dimensional, and $R(f)$ is an isomorphism, then so is $f$. 
	\end{lem}
	
	\begin{cor}\label{realDetectsZero}
		If $M$ is finite-dimensional, and $R(M) = 0$, then $M = 0$.
	\end{cor}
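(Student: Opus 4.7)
The plan is to deduce Corollary \ref{realDetectsZero} directly from Lemma \ref{conservativity} by applying it to the zero endomorphism of $M$. Implicit in the statement is that $M$ is finite-dimensional in the sense of Kimura, since that is the hypothesis under which Lemma \ref{conservativity} is available; without such an assumption the conclusion has no reason to hold.

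Concretely, I would consider the endomorphism $0: M \to M$. Its realization is $R(0) = 0: R(M) \to R(M)$, which under the hypothesis $R(M) = 0$ is the zero morphism on the zero object, hence an isomorphism. Lemma \ref{conservativity} then forces $0: M \to M$ to be an isomorphism in $\mc{M}(k)$, which in any additive category means $\id_M = 0$, i.e. $M \cong 0$.

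There is no real obstacle here; the only thing to be careful about is to record the standing finite-dimensionality hypothesis so that Lemma \ref{conservativity} applies. In practice this corollary will be invoked on motives built from abelian varieties via tensor products, direct summands, and Tate twists, all of which preserve Kimura finite-dimensionality, so the assumption is harmless in the intended applications.
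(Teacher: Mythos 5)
Your argument is correct and is the natural (and essentially only) way to deduce the corollary from Lemma \ref{conservativity}: apply conservativity to the zero endomorphism, whose realization is the identity of the zero object and hence an isomorphism, concluding $\id_M = 0$. You are also right to flag that the finite-dimensionality of $M$ is an implicit standing hypothesis carried over from the lemma.
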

	
	We recall Kimura's nilpotence theorem, which is a powerful tool to lift information from numerical equivalence to rational equivalence. 
	
	\begin{thm}[{\cite[Proposition 7.5]{Kimura2005}}]\label{thmNilpotence}
		If $M$ is finite-dimensional, and $f$ induces the zero map modulo numerical equivalence, then $f$ is nilpotent.
	\end{thm}
	
	\begin{cor}
		If $M$ is finite-dimensional, and $R(f) = 0$, then $f$ is nilpotent.
	\end{cor}
	
	\begin{cor}[{\cite[Corollary 7.8]{Kimura2005}}]\label{corDecompsOfFinDimMotivesLift}
		If $M$ is finite-dimensional, then any direct sum decomposition of $M$ modulo homological equivalence (with respect to $H$) lifts non-canonically to a decomposition modulo rational equivalence. 
	\end{cor}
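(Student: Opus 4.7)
The plan is to lift idempotents along the surjection $\pi: \End(M) \twoheadrightarrow \End_{\hom}(M)$, whose kernel $I$ is, by the previous corollary, a two-sided ideal consisting entirely of nilpotent endomorphisms.

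First I would recast the decomposition $M \cong \bigoplus_{i=1}^n N_i$ modulo homological equivalence as a complete orthogonal system of idempotents $\bar{e}_1, \ldots, \bar{e}_n \in \End_{\hom}(M)$ with $\sum_i \bar{e}_i = \id_M$. The goal is then to produce orthogonal idempotents $e_1, \ldots, e_n \in \End(M)$ summing to $\id_M$ with $\pi(e_i) = \bar{e}_i$, since such a system induces the desired rational lift of the decomposition.

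The main technical step is the single-idempotent case: given $\bar{e} \in \End_{\hom}(M)$ with $\bar{e}^2 = \bar{e}$, choose any $f \in \End(M)$ with $\pi(f) = \bar{e}$. Then $f^2 - f \in I$, so $(f^2 - f)^N = 0$ for some $N$. The classical idempotent-lifting polynomial — for instance, the \enquote{top half}
\begin{gather*}
p(f) \;=\; \sum_{k \geq N} \binom{2N-1}{k} f^k (1-f)^{2N-1-k}
\end{gather*}
of the binomial expansion of $1 = (f + (1-f))^{2N-1}$ — then produces an idempotent $e = p(f) \in \End(M)$ with $\pi(e) = \bar{e}$. Notice that everything here takes place in the (noncommutative) ring $\End(M)$, but only commuting elements $f$ and $1-f$ are involved, so ordinary binomial manipulations are valid.

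I would then induct on $n$ to obtain an orthogonal system. Having lifted $\bar{e}_1$ to an idempotent $e_1$, the motive splits modulo rational equivalence as $M \cong e_1 M \oplus (1-e_1) M$, and $M' := (1-e_1) M$ is finite-dimensional because it is a direct summand of $M$ \cite[Proposition 6.8]{Kimura2005}. The idempotents $\bar{e}_2, \ldots, \bar{e}_n$ descend to a complete orthogonal system in $\End_{\hom}(M')$, so the inductive hypothesis produces orthogonal idempotent lifts $e_2', \ldots, e_n' \in \End(M')$ summing to $\id_{M'}$. Composing with the projection and inclusion $M \twoheadrightarrow M' \hookrightarrow M$ attached to $1-e_1$, these yield orthogonal idempotents $e_2, \ldots, e_n \in \End(M)$, each automatically orthogonal to $e_1$ since they factor through $(1-e_1)M$, and summing with $e_1$ to $\id_M$.

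There is no serious obstacle here beyond this bookkeeping: the entire argument is driven by the nilpotence input supplied by the previous corollary, which reduces the statement to the purely ring-theoretic fact that idempotents (and orthogonal systems of them) lift modulo nil ideals.
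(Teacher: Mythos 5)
Your proof is correct, and it is essentially the argument Kimura gives for his Corollary 7.8 (which the paper simply cites without proof): the preceding nilpotence corollary shows that the kernel of $\End(M) \to \End_{\hom}(M)$ is a nil ideal, so a single idempotent lifts via the standard binomial polynomial, and the complete orthogonal system is built inductively by passing to the corner $(1-e_1)M$, which stays finite-dimensional. There is nothing in the paper to compare beyond the citation, and your write-up supplies the standard details correctly.
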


    Vial proved that finite-dimensionality of a motive can be checked after extension of scalars.
    \begin{prop}[{\cite[Theorem 2]{Vial2017}}]\label{propFdDescent}
        For any field extension $L / k$, $M$ is finite-dimensional iff $M_L$ is finite-dimensional.
    \end{prop}

    We note for later use Ancona's result that the $\ell$-adic realization functor on the category of motives of abelian type over a finite field is conservative, i.e. detects isomorphisms.

    \begin{prop}[{\cite[Theorem 0.4]{Ancona2017}}]\label{propConservativity}
        Let $g: M \to N$ be a morphism of motives of abelian type over $\F_q$. If $g$ induces an isomorphism on $\ell$-adic cohomology, then $g$ is an isomorphism.
    \end{prop}

	\subsection{Motives with coefficients in $F$}
	Let $F$ be an imaginary quadratic field, and $\sim$ an adequate equivalence relation. We may consider $\mc{M}_\sim(k)_F$ as the category of objects with $F$-action in $\mc{M}_\sim(k)_\Q$. There is thus a forgetful functor $\Phi: \mc{M}_\sim(k)_F \to \mc{M}_\sim(k)_\Q$, such that $\Phi(\mathbf{1}_F)$ is (non-canonically) isomorphic to $2 \cdot \mathbf{1}_\Q$. The conjugate of $M \in \mc{M}_\sim(k)_F$ is defined to be the motive $\overline{M}$ such that $\Phi(M) = \Phi(\overline{M})$, and for $x \in F$, multiplication by $x$ on $M$ acts as multiplication by $\overline{x}$ on $\overline{M}$. 
	\par
	For $M_1, M_2 \in \mc{M}_\sim(k)_F$, there is a natural action of $F \otimes_\Q F$ on $M_1 \otimes_\Q M_2$. Let $r \in F$ satisfy $r^2 = -D$ for a positive integer $D$. The element
	\begin{gather*}
		\frac{1}{2}(1 \otimes_\Q 1 - \frac{1}{D}(r \otimes_\Q r)) \in F \otimes_\Q F
	\end{gather*}
	gives an idempotent projector $M_1 \otimes_\Q M_2 \to M_1 \otimes_\Q M_2$, which defines a decomposition in $\mc{M}_\sim(k)_F$  
	\begin{equation}\label{productDecompOverK}
		M_1 \otimes_\Q M_2 \cong M_1 \otimes_F M_2 \oplus M_1 \otimes_F \overline{M_2}. 
	\end{equation}
	Our convention here and throughout is that when $M_1 \otimes_\Q M_2$ is considered as an object of $\mc{M}_\sim(k)_F$, the $F$-action is inherited from the first tensor factor: in particular, $M_1 \otimes_\Q M_2$ is not necessarily isomorphic to  $M_2 \otimes_\Q M_1$ in $\mc{M}_\sim(k)_F$. The decomposition \eqref{productDecompOverK} does not depend on the choice of $r$.
	\par
	The tensor operations $\otimes_F$ and $\otimes_\Q$ associate with each other: for $M_1, M_2, M_3 \in \mc{M}_\sim(k)_F$, there is a natural isomorphism
	\begin{gather*}
		M_1 \otimes_\Q (M_2 \otimes_F M_3) \cong (M_1 \otimes_\Q M_2) \otimes_F M_3
	\end{gather*}
	in $\mc{M}_\sim(k)_F$.
	
	\section{The generalized conjectures of Tate and Beilinson}\label{secGenConjs}
	For $k$ a field, and $f: X \to \Spec k$ a separated morphism of finite type, define the $\ell$-adic \'etale homology of $X$ via
	\begin{gather*}
		H_i(\ol{X}, \Q_\ell(j)) = H^{-i}(\ol{X}, (R{\ol{f}}^! \Q_\ell)(-j)).
	\end{gather*}
	Weight arguments with \'etale homology will figure prominently in the proofs of our main results. 
	\begin{prop}\label{propHomologyProps}
		\'Etale homology has the following features.
			\begin{enumerate}[(i)]
			\item There is a cycle class map
			\begin{gather*}
				\cl: Z_i(X) \to H_{2i}(\ol{X}, \Q_\ell(i))
			\end{gather*}
			which factors through $\CH_i(X)$ \cite[Remark 7.4]{Jannsen1990}. 
			\item When $X$ is a smooth variety of dimension $d$ over $k$, and $i, j \in \Z$, there is a canonical, $G_k$-equivariant Poincar\'e duality isomorphism
			\begin{gather}\label{eqSmoothHoCoIso}
				H^{2d-i}(\ol{X}, \Q_\ell(d-j)) \cong H_i(\ol{X}, \Q_\ell(j))
			\end{gather}
			compatible with cycle class maps \cite[Definition 6.1 j)]{Jannsen1990}.
			\item For $Z \hookrightarrow X$ a closed immersion, $U = X - Z$, and $j \in \Z$, there is a long exact sequence
			\begin{gather*}
				... \to H_{i+1}(\ol{U}, \Q_\ell(j)) \to H_i(\ol{Z}, \Q_\ell(j)) \to H_i(\ol{X}, \Q_\ell(j)) \to H_i(\ol{U}, \Q_\ell(j)) \to...,
			\end{gather*}
			cf. \cite[Definition 6.1 k)]{Jannsen1990}.
			\item When $k$ is finitely generated, the \'etale homology of $X$ has an ascending, Galois-equivariant weight filtration $W_\bullet H_i(\ol{X}, \Q_\ell(j))$ \cite[Lemma 6.8.2]{Jannsen1990}. When $k \cong \F_q$ is finite and $m \in \Z$, the eigenvalues of $\Frob_q$ acting on the $m$-th graded component $\gr^W_m H_i(\ol{X}, \Q_\ell(j))$ are Weil $q$-numbers of weight $m$.
			\item For $k$ a finitely generated field, $j: U \hookrightarrow X$ an open immersion with closed complement $Z$, and $a, b \in \Z$, the pullback map
			\begin{gather*}
				j^*: W_{2b-a} H_{a}(\ol{X}, \Q_\ell(b)) \to  W_{2b-a} H_{a}(\ol{U}, \Q_\ell(b))
			\end{gather*}
			is surjective \cite[Lemma 7.5]{Jannsen1990}. Taking $k \cong \F_q$, $a = 2i$ and $b = i$ for some $i \in \Z$ (and appealing to Lemma \ref{lemFrobSim1Exact}), this yields a commutative diagram with exact rows
			\begin{equation*}
				\begin{tikzcd}
					\CH_i(Z) \otimes_\Q \Q_\ell \arrow[r] \arrow[d] & \CH_i(X) \otimes_\Q \Q_\ell \arrow[r] \arrow[d] & \CH_i(U) \otimes_\Q \Q_\ell \arrow[r, overlay] \arrow[d] & 0 \\
					H_{2i}(\ol{Z}, \Q_\ell(i))^{\Frob_q \sim 1} \arrow[r] & H_{2i}(\ol{X}, \Q_\ell(i))^{\Frob_q \sim 1} \arrow[r] & H_{2i}(\ol{U}, \Q_\ell(i))^{\Frob_q \sim 1} \arrow[r, overlay] & 0.
				\end{tikzcd}
			\end{equation*}
			Here the superscript $\Frob_q \sim 1$ denotes the generalized eigenspace of the eigenvalue 1 of $\Frob_q$.
		\end{enumerate}
	\end{prop} 
	Part of the above proposition is restating properties of a twisted Poincar\'e duality theory with weights \cite[Definitions 6.1 and 6.5]{Jannsen1990}; \'etale homology and cohomology constitute such a theory \cite[Lemma 6.8.2]{Jannsen1990}.
	\par
	Using \'etale homology, Jannsen generalized the conjectures of Tate and Beilinson to arbitrary varieties, as recalled below. Here, for a $G_{\F_q}$-representation on a finite-dimensional $\Q_\ell$-vector space $V$, we define
	\begin{gather*}
		\chi(V, t) = \det(1-\Frob_q t | V).
	\end{gather*}
	
	\begin{conj}[$T_i(X)$: {\cite[Conjecture 7.3]{Jannsen1990}}]\label{conjGenTate}
		For $X$ a variety over a finitely generated field $k$, the cycle class map
		\begin{gather*}
			\CH_i(X) \otimes_{\Q} \Q_{\ell} \to H_{2i}(\ol{X}, \Q_{\ell}(i))^{G_k}
		\end{gather*}
		is surjective. 
	\end{conj}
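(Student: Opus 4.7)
Since Conjecture \ref{conjGenTate} is Jannsen's generalized Tate conjecture itself --- a deep open problem even for smooth projective varieties --- I take the task to be sketching a natural reduction strategy rather than producing a genuine proof. The plan is a dévissage on $\dim X$ that reduces $T_i(X)$ for an arbitrary variety to Conjecture \ref{conjSmoothGenTate} applied to a smooth open stratum of $X$, together with an inductive hypothesis on its closed complement.

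Concretely, I would pick a smooth dense open $j: U \hookrightarrow X$ with $d = \dim U = \dim X$, and write $i: Z \hookrightarrow X$ for the closed complement, which has strictly smaller dimension. The localization sequence \eqref{eqLocalization} specialized to $Y = \Spec k$ yields
$$\CH_i(Z) \otimes_\Q \Q_\ell \xrightarrow{i_*} \CH_i(X) \otimes_\Q \Q_\ell \xrightarrow{j^*} \CH_i(U) \otimes_\Q \Q_\ell \to 0,$$
while the distinguished triangle $i_* i^! \to \id \to Rj_* j^*$ applied to $Rf^!\Q_\ell(-i)[-2i]$ (with $f: X \to \Spec k$ the structure map) produces a $G_k$-equivariant long exact sequence relating the homology groups $H_{2i}$ of $\overline{Z}$, $\overline{X}$, and $\overline{U}$ with $\Q_\ell(i)$ coefficients. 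For the smooth piece $U$, the isomorphism \eqref{eqSmoothHoCoIso} identifies $H_{2i}(\overline{U}, \Q_\ell(i))^{G_k}$ with $H^{2d-2i}(\overline{U}, \Q_\ell(d-i))^{G_k}$, so $T_i(U)$ is exactly the classical statement $T^{d-i}(U)$ of Conjecture \ref{conjSmoothGenTate} applied to the smooth variety $U$.

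The induction step would be a diagram chase. Given $\alpha \in H_{2i}(\overline{X}, \Q_\ell(i))^{G_k}$, its image in $H_{2i}(\overline{U}, \Q_\ell(i))^{G_k}$ lifts by $T_i(U)$ to a cycle class on $U$, which in turn lifts to $\CH_i(X) \otimes_\Q \Q_\ell$ via surjectivity of $j^*$; the difference between $\alpha$ and the class of this lift lies in the image of $H_{2i}(\overline{Z}, \Q_\ell(i))^{G_k}$ and, by $T_i(Z)$, comes from $\CH_i(Z) \otimes_\Q \Q_\ell$.

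The main obstacles are twofold, and both are essentially fatal to an unconditional attack. First, $T^{d-i}(U)$ for smooth, possibly non-proper $U$ is itself a famous open case of the Tate conjecture: even granting $T^j$ for every smooth projective variety, passing to smooth open $U$ typically requires compactifying via de Jong's alterations and then controlling the boundary strata under a semisimplicity hypothesis. Second, the diagram chase above tacitly assumes that the homology long exact sequence remains exact in the middle after applying $(-)^{G_k}$, which is not automatic: it amounts to the vanishing of a Galois $H^1$-obstruction and demands a weight argument in the style of \cite{Deligne1980}, as used systematically in \cite{Jannsen1990}. For these reasons the conjecture is out of reach unconditionally, and the realistic goal is conditional results in the spirit of Theorem \ref{thmBBCriterionIntro}, which deduce $T_i$ and $BB^i$ from instances of the classical Tate conjecture on smooth projective inputs.
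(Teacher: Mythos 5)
This statement is a \emph{conjecture}, not a theorem; the paper simply states it (citing Jannsen) and does not attempt a proof, so there is nothing to compare against. You correctly recognize this and pivot to sketching a reduction strategy, which is the reasonable thing to do. It is worth noting that the dévissage you outline is essentially what the paper carries out in Lemma \ref{lemTateForOpen}: there, $T_i(Z)$, $SS_i(Z)$, and $T_i(U)$ together give $T_i(X)$, and the extra semisimplicity hypothesis $SS_i(Z)$ is precisely what makes the long exact sequence in homology remain exact after taking $G_k$-invariants (or, more precisely, generalized Frobenius-eigenspaces) — the obstruction you flag as your second obstacle. So your reduction is accurate, your identification of the two obstacles (non-proper Tate conjecture for $U$ and failure of exactness of $(-)^{G_k}$) is exactly right, and the fix for the second is the one the paper uses. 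Your conclusion, that the realistic target is conditional results like Theorem \ref{thmBBCriterionIntro}, matches the paper's own framing of this conjecture as an input hypothesis rather than a theorem to be proved.
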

	
	\begin{conj}[$B_i(X)$: {\cite[Conjectures 12.4 a) and 12.6 b)]{Jannsen1990}}]\label{conjGenBeilinson}
		For $X$ a variety over a finite field $k \cong \F_q$,
		\begin{enumerate}[(i)]
			\item the cycle class map induces an isomorphism
			\begin{gather*}
				\CH_i(X) \otimes_{\Q} \Q_{\ell} \cong H_{2i}(\ol{X}, \Q_\ell(i))^{G_k};
			\end{gather*}
			\item ($SS_i(X)$) the eigenvalue 1 of $\Frob_q$ acting on $H_{2i}(\ol{X}, \Q_\ell(i))$ is semisimple. 
		\end{enumerate} 
		In particular, 
		\begin{gather*}
			\dim_\Q \CH_i(X) = \ord_{t=1} \chi(H_{2i}(\ol{X}, \Q_\ell(i)), t).
		\end{gather*}
	\end{conj}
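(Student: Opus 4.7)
The ``in particular'' clause is the only content requiring argument here, granted (i) and (ii); the plan is to reduce it to a linear algebra computation involving the action of geometric Frobenius.

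Write $V = H_{2i}(\ol{X}, \Q_\ell(i))$ and $T = \Frob_q$ acting on $V$. First I would observe that $V^{G_k} = \ker(T - 1)$. Indeed, since $k \cong \F_q$ the group $G_k$ is topologically generated by $\Frob_q$, and the stabilizer in $G_k$ of any $v \in V$ is closed with respect to the profinite topology (the action on the finite-dimensional $\Q_\ell$-vector space $V$ being continuous); hence if $Tv = v$, then the stabilizer of $v$ contains the closure of $\langle T \rangle$, which is all of $G_k$. Combined with (i), this yields
\[
\dim_\Q \CH_i(X) \;=\; \dim_{\Q_\ell} V^{G_k} \;=\; \dim_{\Q_\ell} \ker(T - 1).
\]

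On the cohomological side, factor $\chi(V, t) = \det(1 - Tt \mid V) = \prod_j (1 - \lambda_j t)$ over $\ol{\Q_\ell}$. Then $\ord_{t=1} \chi(V, t)$ counts the number of eigenvalues $\lambda_j$ equal to $1$, i.e.\ the algebraic multiplicity of $1$ as an eigenvalue of $T$. By assumption (ii) this eigenvalue is semisimple, so algebraic and geometric multiplicities agree, giving $\ord_{t=1} \chi(V, t) = \dim_{\Q_\ell} \ker(T - 1)$. Combining the two computations yields the desired equality.

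There is no substantive obstacle: the argument is purely formal once one grants (i), (ii), and continuity of the Galois action. The only delicate points are the identification of $V^{G_k}$ with the $1$-eigenspace of $\Frob_q$ (which uses continuity rather than the abstract-group statement that $G_k = \langle \Frob_q \rangle$) and the routine fact that semisimplicity at an eigenvalue converts algebraic into geometric multiplicity.
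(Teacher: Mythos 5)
Your argument is correct and is exactly the routine deduction the paper leaves implicit (the statement is a conjecture, and the ``in particular'' clause is offered as an immediate formal consequence of (i) and (ii), with no proof given). You correctly identify the two small points that need care---that $V^{G_k} = \ker(\Frob_q - 1)$ by continuity of the Galois action, and that semisimplicity of the eigenvalue $1$ equates algebraic and geometric multiplicity---and both are handled properly.
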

	
	\begin{rem}
		The conjectures \cite[Conjectures 12.4 a) and 12.6 b)]{Jannsen1990} are more general: they concern a map from the motivic homology group $H_r^{\mc{M}}(X, \Q(s)) \otimes_\Q \Q_\ell$ to $H_r(\ol{X}, \Q_\ell(s))$. The conjecture $B_i(X)$ is the case $(r, s) = (2i, i)$.
	\end{rem}

	The conjectures also make sense for $X$ a finite-type separated scheme over $k$. When $k$ is finitely generated, we write $T_i(X)$ for the truth value of Conjecture \ref{conjGenTate} for such $X$ and $i \in \Z$. When $k$ is finite, we write $B_i(X)$ for the truth value of Conjecture \ref{conjGenBeilinson} for $X$ and $i$, and $SS_i(X)$ for the truth of Conjecture \ref{conjGenBeilinson}(ii). When all connected components of $X$ are pure-dimensional, we write $T^i(X)$ (resp. $B^i(X)$) for the truth of Conjecture \ref{conjGenTate} (resp. Conjecture \ref{conjGenBeilinson}) for cycles of codimension $i$ in $X$, and similarly write $SS^i(X)$ for Conjecture \ref{conjGenBeilinson}(ii) in complementary dimension. These forms of the conjectures also apply to $M$ a motive in $\mc{M}(k)$, and we use the same notation $T^i(M), \ B^i(M), \ SS^i(M)$. We caution that in all discussion of these conjectures, it is important to keep track of the base field one is working over (which is left implicit in the notation).
    \par
    We now give some sufficient conditions for the conjectures. In the remainder of the paper, we will often implicitly use the following lemma without further comment. 
    \begin{lem}\label{lemFrobSim1Exact}
        The functor sending a finite-dimensional continuous $G_{\F_q}$-representation $V$ over $\Q_\ell$ to $V^{\Frob_q \sim 1}$ is exact.
    \end{lem}
    \begin{proof}
        Given a short exact sequence of such representations
        \begin{gather*}
            0 \to V_1 \to V_2 \to V_3 \to 0,
        \end{gather*}
        write $\det(t - \Frob_q | V_2) = (t-1)^r P(t)$, where $P(1) \neq 0$. For each $i$, the surjection
        \begin{gather*}
            P(\Frob_q): V_i \to V_i^{\Frob_q\sim 1}
        \end{gather*}
        is split by the inclusion. We thus obtain a commutative diagram with exact rows
        \[
		\begin{tikzcd}
			0 \arrow[r] & V_1 \arrow[r] \arrow[d] & V_2 \arrow[r] \arrow[d] & V_3 \arrow[r] \arrow[d] & 0 \\ 
            0 \arrow[r] & V_1^{\Frob_q \sim 1} \arrow[r] & V_2^{\Frob_q \sim 1} \arrow[r] & V_3^{\Frob_q \sim 1} \arrow[r] & 0,
		\end{tikzcd}
		\]
        where each vertical arrow is the map $P(\Frob_q)$.
    \end{proof}

	\begin{lem}\label{lemTateForOpen}
		Let $Z \hookrightarrow X$ be a closed immersion of finite-type separated schemes over a finite field $k \cong \F_q$, $U = X - Z$, and $i \in \Z$. Then
		\begin{enumerate}[(i)]
			\item $T_i(Z)$, $SS_i(Z)$ and $B_i(X)$ together imply $B_i(U)$;
			\item $T_i(Z)$, $SS_i(Z)$, and $T_i(U)$ together imply $T_i(X)$.
		\end{enumerate}
	\end{lem}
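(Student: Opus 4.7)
I would link the Chow localization exact sequence $\CH_i(Z) \to \CH_i(X) \to \CH_i(U) \to 0$ with the long exact localization sequence in $\ell$-adic étale homology (arising from the distinguished triangle $\ol i_* \ol i^! \to \id \to R \ol j_* \ol j^*$ on the dualizing complex of $\ol X$), compatibly via the cycle class map, and chase $G_k$-invariants. Specifically, I would split the homology sequence into the short exact pieces $0 \to K \to H_{2i}(\ol X, \Q_\ell(i)) \to L \to 0$ and $0 \to L \to H_{2i}(\ol U, \Q_\ell(i)) \to M \to 0$, where $K = \im(H_{2i}(\ol Z) \to H_{2i}(\ol X))$, $L = \im(H_{2i}(\ol X) \to H_{2i}(\ol U))$, and $M \hookrightarrow H_{2i-1}(\ol Z, \Q_\ell(i))$ via the connecting map.

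The elementary technical input is that for a continuous, finite-dimensional $\Q_\ell$-representation $V$ of $G_k = \widehat{\Z}$ on which $\Frob_q$ acts with eigenvalue $1$ semisimply, $V$ splits canonically as $V^{G_k} \oplus V_{\neq 1}$, with $V_{\neq 1}$ Frobenius-stable and without eigenvalue $1$. I would use this twice: (a) $SS_i(Z)$ makes $H_{2i}(\ol Z)^{G_k} \twoheadrightarrow K^{G_k}$ surjective, since the image of the $V_{\neq 1}$-summand of $H_{2i}(\ol Z)$ in $K$ has no $\Frob_q$-invariants; and (b) for part (i), $SS_i(X)$ (furnished by $B_i(X)$) similarly makes $H_{2i}(\ol X)^{G_k} \twoheadrightarrow L^{G_k}$ surjective.

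For (ii), the chase is direct: given $\alpha \in H_{2i}(\ol X)^{G_k}$, apply $T_i(U)$ to write its image in $H_{2i}(\ol U)^{G_k}$ as $\cl(\gamma_U)$ with $\gamma_U \in \CH_i(U) \otimes \Q_\ell$, lift to $\gamma \in \CH_i(X) \otimes \Q_\ell$, observe that $\alpha - \cl(\gamma) \in K^{G_k}$, lift this further to $\beta \in H_{2i}(\ol Z)^{G_k}$ using (a), and realize $\beta$ as a cycle class via $T_i(Z)$. This writes $\alpha$ in the image of $\cl_X$.

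For (i), the new ingredient is a weight bound. By Poincaré--Verdier duality one has $H_j(\ol Y, \Q_\ell(r)) \cong H^j_c(\ol Y, \Q_\ell)^\vee \otimes \Q_\ell(-r)$, so Deligne's bound from Weil~II ($H^j_c$ has weights $\le j$) gives that every Frobenius eigenvalue on $H_{2i-1}(\ol Z, \Q_\ell(i))$ has weight $\ge -(2i-1) + 2i = 1$, hence absolute value $\ge q^{1/2}$; in particular $H_{2i-1}(\ol Z, \Q_\ell(i))^{G_k} = 0$, so $M^{G_k} = 0$ and $L^{G_k} = H_{2i}(\ol U)^{G_k}$. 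Combining with (b) yields a surjection $H_{2i}(\ol X)^{G_k} \twoheadrightarrow H_{2i}(\ol U)^{G_k}$. Surjectivity of $\cl_U$ then follows from the isomorphism $\cl_X$ in $B_i(X)$; injectivity of $\cl_U$ follows from a four-lemma chase mirroring (ii), using injectivity of $\cl_X$ together with $T_i(Z)$ and (a); and $SS_i(U)$ holds because eigenvalue $1$ is semisimple on $L$ (inherited from $SS_i(X)$, as $L$ is a quotient) and absent on $M$, forcing every generalized $1$-eigenvector of $H_{2i}(\ol U)$ into $L$, where it is an honest eigenvector. The main obstacle is the weight bookkeeping for possibly singular and non-proper $Z$, which forces the use of Verdier duality in place of smooth Poincaré duality, and a disciplined application of each semisimplicity hypothesis precisely where a genuine Frobenius-invariant splitting is needed.
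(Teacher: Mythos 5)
Your argument is correct and uses the same basic strategy as the paper: compare the Chow localization sequence to the homology localization sequence via the cycle class map and chase. The difference is mainly one of packaging. The paper invokes Jannsen's \cite[Lemma 7.5]{Jannsen1990}, which directly produces the right exact sequence
\begin{gather*}
H_{2i}(\ol Z, \Q_\ell(i))^{\Frob_q \sim 1} \to H_{2i}(\ol X, \Q_\ell(i))^{\Frob_q \sim 1} \to H_{2i}(\ol U, \Q_\ell(i))^{\Frob_q \sim 1} \to 0
\end{gather*}
on generalized $1$-eigenspaces, and then applies a four-lemma chase, getting $\CH_i(U)\otimes\Q_\ell \cong H_{2i}(\ol U,\Q_\ell(i))^{\Frob_q\sim 1}$ for (i), which yields both surjectivity onto $G_k$-invariants and $SS_i(U)$ at once. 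You instead re-derive the input to that lemma: you split the homology sequence into short exact pieces, prove that the connecting term $M \hookrightarrow H_{2i-1}(\ol Z, \Q_\ell(i))$ has no invariants using the weight $\le j$ bound on $H^j_c$ from Weil~II, and use the semisimplicity hypothesis to produce Frobenius-invariant splittings where needed — which is essentially how Jannsen's lemma is proved. You then argue $B_i(U)(\mathrm{i})$ and $SS_i(U)$ separately, whereas the paper's use of generalized eigenspaces delivers them simultaneously. Both are valid; the paper's route is shorter because it delegates the weight bookkeeping to the cited lemma and phrases the diagram chase at the level of $\Frob_q\sim 1$ eigenspaces, while yours is more self-contained.
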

	\begin{proof}
		Consider the commutative diagram with exact rows
		\[
		\begin{tikzcd}
			\CH_i(Z) \otimes_\Q \Q_\ell \arrow[r] \arrow[d] & \CH_i(X) \otimes_\Q \Q_\ell \arrow[r] \arrow[d] & \CH_i(U) \otimes_\Q \Q_\ell \arrow[r, overlay] \arrow[d] & 0 \\
			H_{2i}(\ol{Z}, \Q_\ell(i))^{\Frob_q \sim 1} \arrow[r] & H_{2i}(\ol{X}, \Q_\ell(i))^{\Frob_q \sim 1} \arrow[r] & H_{2i}(\ol{U}, \Q_\ell(i))^{\Frob_q \sim 1} \arrow[r, overlay] & 0.
		\end{tikzcd}
		\]
		Under the hypotheses of (i), the left vertical arrow is surjective, and the middle vertical arrow is an isomorphism. The cycle class map $\CH_i(U) \otimes_\Q \Q_\ell \to H_{2i}(\ol{U}, \Q_\ell(i))^{\Frob_q \sim 1}$ is injective by the four-lemma, and thus an isomorphism for dimension reasons.
		\par
		In the situation of (ii), the hypothesis $SS_i(Z)$ implies that
        \begin{gather*}
            H_{2i}(\ol{Z}, \Q_\ell(i))^{\Frob_q \sim 1} = H_{2i}(\ol{Z}, \Q_\ell(i))^{G_{\F_q}}.
        \end{gather*}
        Likewise, the hypothesis $T_i(U)$ implies that the right arrow in the commutative square 
        \[
		\begin{tikzcd}
			\CH_i(X) \otimes_\Q \Q_\ell \arrow[r] \arrow[d] & \CH_i(U) \otimes_\Q \Q_\ell \arrow[d]\\
			H_{2i}(\ol{X}, \Q_\ell(i))^{G_{\F_q}} \arrow[r] & H_{2i}(\ol{U}, \Q_\ell(i))^{G_{\F_q}}
		\end{tikzcd}
		\]
        is surjective. Since the top arrow is also surjective, the bottom arrow is as well. We thus have a commutative diagram with exact rows
		\[
		\begin{tikzcd}
			\CH_i(Z) \otimes_\Q \Q_\ell \arrow[r] \arrow[d] & \CH_i(X) \otimes_\Q \Q_\ell \arrow[r] \arrow[d] & \CH_i(U) \otimes_\Q \Q_\ell \arrow[r, overlay] \arrow[d] & 0 \\
			H_{2i}(\ol{Z}, \Q_\ell(i))^{G_{\F_q}} \arrow[r] & H_{2i}(\ol{X}, \Q_\ell(i))^{G_{\F_q}} \arrow[r] & H_{2i}(\ol{U}, \Q_\ell(i))^{G_{\F_q}} \arrow[r, overlay] & 0.
		\end{tikzcd}
		\]
		Now the hypotheses $T_i(Z)$ and $T_i(U)$, combined with the other four-lemma, imply $T_i(X)$.
	\end{proof}
	
	\begin{lem}\label{lemBi(X)AlongExts}
		If $k \cong \F_q$, $X$ and $Y$ are finite-type smooth schemes over $k$, and $f: Y \to X$ is a $k$-morphism which factors as a composition of finite Galois covers and universal homeomorphisms, then $B^i(Y)$ implies $B^i(X)$.
	\end{lem}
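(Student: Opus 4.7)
The plan is to induct on the length of the factorization $f = g_n \circ \cdots \circ g_1$, reducing to the two base cases in which $f$ itself is either a single finite Galois cover or a single universal homeomorphism, and showing in each case that $B^i$ descends along such a morphism.

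For the universal homeomorphism case, I would use that the small \'etale sites of $Y$ and $X$ are equivalent under a universal homeomorphism (cf.\ SGA 4, Exp.\ VIII), so that pullback induces a $G_k$-equivariant isomorphism $H^{2i}(\ol{X}, \Q_\ell(i)) \cong H^{2i}(\ol{Y}, \Q_\ell(i))$. Any such $f$ between finite-type smooth schemes is finite and radicial, so $f^* f_* = f_* f^* = \deg(f)$ on Chow groups, which forces $f^* : \CH^i(X) \otimes_\Q \Q \to \CH^i(Y) \otimes_\Q \Q$ to be an isomorphism. Under these identifications the Beilinson isomorphism and the semisimplicity statement for $Y$ become exactly those for $X$.

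For the finite Galois cover case, let $G = \Gal(Y/X)$. Using $f^* f_* = |G|$ and the averaging operator $\tfrac{1}{|G|}\sum_{g \in G} g^*$, pullback identifies $\CH^i(X) \otimes_\Q \Q_\ell$ with $(\CH^i(Y) \otimes_\Q \Q_\ell)^G$, compatibly with the analogous identification $H^{2i}(\ol{X}, \Q_\ell(i)) \cong H^{2i}(\ol{Y}, \Q_\ell(i))^G$ (the latter via the trace map, valid because $|G|$ is invertible in $\Q_\ell$). Taking $G$-invariants of the isomorphism asserted by $B^i(Y)$, and noting that invariants commute with taking $G_k$-fixed points, then yields the Beilinson isomorphism of $B^i(X)$. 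For the semisimplicity clause, observe that pullback realizes $H^{2i}(\ol{X}, \Q_\ell(i))$ as a $G_k$-subrepresentation (indeed a direct summand, via averaging) of $H^{2i}(\ol{Y}, \Q_\ell(i))$; since the eigenvalue $1$ of $\Frob_q$ acts semisimply on the ambient representation, it acts semisimply on any $G_k$-stable subrepresentation.

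The main obstacle is the Chow-theoretic identification in the Galois case, since lemma \ref{lemCHInjectsUnderBaseExt}(iii) is stated only for smooth quasi-projective varieties. I would sidestep this by using the averaging operator directly, together with the relation $f^* f_* = |G|$ on $\CH^i \otimes_\Q \Q_\ell$, which goes through for any finite \'etale $G$-cover of smooth finite-type $k$-schemes; and I would treat each connected component of $X$ and $Y$ separately so that the pure-dimensional hypothesis built into the notation $B^i(-)$ is preserved along $f$.
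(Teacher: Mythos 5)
Your proof is correct and takes essentially the same approach as the paper's: reduce to the two base cases (universal homeomorphism, single finite Galois cover), use invariance of the \'etale site for the former, and use $f^*f_* = |G|$ together with an averaging/trace argument to transfer the Beilinson bijectivity for the latter. The paper phrases the Galois step as injectivity via pullback plus surjectivity via the trace diagram, while you phrase it as an identification of $\CH^i(X)\otimes\Q_\ell$ and $H^{2i}(\ol{X},\Q_\ell(i))$ with $G$-invariants; these are trivially equivalent. Two small points in your favor: you explicitly check the semisimplicity clause of $B^i$ (the paper leaves this implicit, though it is indeed automatic from the direct-summand observation you make), and you flag that lemma \ref{lemCHInjectsUnderBaseExt}(iii) is stated for smooth quasi-projective varieties whereas the lemma here is for arbitrary finite-type smooth schemes — your direct appeal to the averaging operator and $f^*f_* = |G|$ on rational Chow groups sidesteps this hypothesis mismatch cleanly.
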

	\begin{proof}
		The case where $f$ is a universal homeomorphism is clear, as then the \'etale sites (resp. the Chow groups) of $X$ and $Y$ are equivalent (resp. isomorphic). So we may assume $f$ is finite Galois. As the pullback injects $\CH^i(X)$ into $\CH^i(Y)$, the assumption $B^i(Y)$ implies that
		\begin{gather*}
			\cl: \CH^i(X) \otimes_{\Q} \Q_\ell \to H^{2i}(\ol{X}, \Q_\ell(i))^{G_k}
		\end{gather*}
		is injective. It is also surjective: the diagram
		\[
		\begin{tikzcd}
			\CH^i(Y) \otimes_\Q \Q_\ell \arrow[r, "\cl"] \arrow[d, "\Tr_{f}"] & H^{2i}(\ol{Y}, \Q_\ell(i))^{G_k} \arrow[d, "\Tr_{f}"] \\
			\CH^i(X) \otimes_\Q \Q_\ell \arrow[r, "\cl"] & H^{2i}(\ol{X}, \Q_\ell(i))^{G_k}
		\end{tikzcd}
		\]
		commutes (where $\Tr_f$ denotes the trace, and we use Lemma \ref{lemCHInjectsUnderBaseExt} to identify the codomain of the left arrow), and the top and right arrows are surjective, so the bottom arrow is as well. 
	\end{proof}
	
	\begin{lem}\label{lemBi(X)BaseChange}
		If $k \cong \F_q$, $X$ is a finite-type smooth scheme over $k$, and $L / k$ is a finite extension, then $SS^i(X_L)$ implies $SS^i(X)$, $T^i(X_L)$ implies $T^i(X)$, and $B^i(X_L)$ implies $B^i(X)$. Here $X_L$ is considered as a scheme over $L$.
	\end{lem}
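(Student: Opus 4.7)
The plan is to handle the two implications separately, with the first reducing immediately to the previous lemma.

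For the implication $B^i(X_L) \Rightarrow B^i(X)$, I would simply invoke lemma \ref{lemBi(X)AlongExts}. Since $L/k$ is an extension of finite fields, it is Galois with group $G = \Gal(L/k)$, and the projection $p\colon X_L \to X$ is a finite Galois cover of smooth $k$-schemes with deck group $G$. Viewing $X_L$ as a smooth $k$-scheme (rather than an $L$-scheme), Shapiro's lemma identifies $H^{2i}(\overline{X_L}, \Q_\ell(i))^{G_k}$ with $H^{2i}(\overline{X}, \Q_\ell(i))^{G_L}$ and identifies the cycle class map for $X_L / k$ with the cycle class map for $X_L / L$, so $B^i(X_L)$ as an $L$-scheme is the same assertion as $B^i(X_L)$ as a $k$-scheme. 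Thus lemma \ref{lemBi(X)AlongExts} applies directly.

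For the implication $SS^i(X_L) \Rightarrow SS^i(X)$, I would argue via linear algebra on $V = H^{2i}(\overline{X}, \Q_\ell(i)) = H^{2i}(\overline{X_L}, \Q_\ell(i))$. Set $n = [L:k]$, so that $\Frob_{q^n} = \Frob_q^n$. The key observation is that the generalized $1$-eigenspace $V_1$ of $\Frob_q$ on $V$ is contained in the generalized $1$-eigenspace $W$ of $\Frob_q^n$: this follows from the factorization
\[
\Frob_q^n - 1 \;=\; (\Frob_q - 1)\bigl(1 + \Frob_q + \cdots + \Frob_q^{n-1}\bigr),
\]
whose two factors commute, so that $(\Frob_q - 1)^m v = 0$ implies $(\Frob_q^n - 1)^m v = 0$. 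By $SS^i(X_L)$ the endomorphism $\Frob_q^n$ acts as the identity on $W$, whence $\Frob_q|_W$ satisfies $X^n - 1$ and is therefore diagonalizable (over $\overline{\Q_\ell}$). Restricting this diagonalizable operator to the invariant subspace $V_1 \subseteq W$ yields an operator which is simultaneously diagonalizable and (by definition of $V_1$) unipotent, hence equal to the identity; this is exactly $SS^i(X)$.

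Neither step poses a serious obstacle: the first is a direct appeal to the previous lemma, and the second is essentially the elementary fact that if $T^n$ is semisimple at the eigenvalue $1$, then so is $T$. The only mild care needed is in organizing the containment $V_1 \subseteq W$ and in justifying the diagonalizability of $\Frob_q|_W$ from the equation $\Frob_q^n = 1$, after possibly extending scalars to $\overline{\Q_\ell}$ so that $X^n - 1$ splits with distinct roots.
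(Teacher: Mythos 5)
Your proof is correct and follows essentially the same route as the paper. For the semisimplicity step, the paper also observes that the generalized $1$-eigenspace of $\Frob_q$ lands inside the $\Frob_q^n$-fixed subspace and then deduces $\Frob_q = 1$ there; it phrases the final step as ``$x-1$ divides $x^n-1$ with multiplicity one'' (so $\Frob_q$ on $V_1$ satisfies the gcd $(x-1)^m \wedge (x^n-1) = x-1$), whereas you argue via diagonalizability of an operator annihilated by $x^n-1$ restricted to a subspace where it is unipotent; these are interchangeable. For the $B^i$ implication, the paper appeals to ``an argument similar to that of lemma \ref{lemBi(X)AlongExts}'' without spelling out the bookkeeping, and your observation that $X_L \to X$ is a finite Galois cover of smooth $k$-schemes and that Shapiro's lemma identifies $B^i(X_L/L)$ with $B^i(X_L/k)$ is exactly the correct way to make that appeal precise.
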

	\begin{proof}
		Let $n = [L:k]$. All 1-generalized eigenvectors of $\Frob_q$ acting on $H^{2i}(\ol{X}, \Q_\ell(i))$ are also 1-generalized eigenvectors for $\Frob_q^n$, as the polynomial $x-1 \in \Z[x]$ divides $x^n - 1$. By the hypothesis $SS^i(X_L)$, 1 is a semisimple eigenvalue of $\Frob_q^n$, so all 1-generalized eigenvectors of $\Frob_q$ are fixed by $\Frob_q^n$. But this implies that they are all fixed by $\Frob_q$, as $x-1$ divides $x^n - 1$ with multiplicity one.
		\par
		That $T^i(X_L)$ implies $T^i(X)$, and $B^i(X_L)$ implies $B^i(X)$, now follows from a \enquote{transfer} argument similar to that of Lemma \ref{lemBi(X)AlongExts}.
	\end{proof}
	
	\begin{lem}\label{lemTateEquivs}
		Let $M$ be a motive over a finite field $k$ which lies in the thick and rigid tensor subcategory of $\mc{M}(k)$ generated by objects of the form $\mf{h}(A_L)$, where $A$ is an abelian variety over $k$ and $L / k$ is a finite extension. Then for $i \in \Z$, $T^i(M)$ implies $B^i(M)$.
	\end{lem}
	\begin{proof}
		This essentially follows from the arguments of \cite[Section 1]{Kahn2003}, but we take a slightly different approach, in order to be careful around some points raised in \cite[Remarks 4.11]{Jannsen2007}. By Tate twisting, we can and do assume without loss of generality that $i = 0$. As $H^\bullet(M, \Q_\ell)$ is a semisimple Galois representation (\cite[Lemme 1.9]{Kahn2003}), the homological motive $M_{\hom}$ has a direct sum decomposition 
		\begin{gather*}
			M_{\hom} \cong M'_{\hom} \oplus M''_{\hom},
		\end{gather*}
		such that the Frobenius $\Frob \in \End(M)$ acts as the identity on $H^\bullet(M'_{\hom}, \Q_\ell) = H^0(M'_{\hom}, \Q_\ell)$, and the characteristic polynomial $P(t)$ of $\Frob$ acting on $V = H^\bullet(M''_{\hom}, \Q_\ell)$ does not vanish at $t = 1$. Explicitly, $M'_{\hom}$ is cut out by the projector $T = P(\Frob) / P(1)$ on $M_{\hom}$.
		\par
		By Proposition \ref{propFdDescent}, $M$ is finite-dimensional, since there exists a finite extension $N / k$ such that $M_N$ is of abelian type. Corollary \ref{corDecompsOfFinDimMotivesLift} then implies that the decomposition of $M_{\hom}$ lifts to a direct sum decomposition
		\begin{gather*}
			M \cong M' \oplus M''
		\end{gather*}
		modulo rational equivalence. The Frobenius endomorphism commutes with all morphisms of motives and hence preserves this decomposition. As $T$ acts as zero on $V$, Kimura's nilpotence theorem (Theorem \ref{thmNilpotence}) implies that $T$ is nilpotent on $M''$. But $T$ acts as the identity on $\CH^0(M'') = \Hom(\mathbf{1}, M'')$, since the Frobenius acts as the identity on $\mathbf{1}$. Hence, $\CH^0(M'') = 0$, and $B^0(M'')$ holds. Thus $B^0(M)$ is equivalent to $B^0(M')$. 
		\par
		By the assumption $T^0(M)$, $T^0(M')$ holds. Choosing a basis of algebraic cycles for the $\ell$-adic cohomology of $M'$, we obtain a map of motives
        \begin{gather*}
            f: \mathbf{1}^{\oplus r} \to M'
        \end{gather*}
        which induces an isomorphism on $\ell$-adic realizations. By the conservativity statement of Proposition \ref{propConservativity}, $f$ is an isomorphism (since this can be checked after base change to $N$, and $M'_N$ is of abelian type). Therefore $B^0(M')$ holds.
	\end{proof}
	
	\section{The Beilinson--Bloch conjecture over function fields}
	In this section, let $k = \F_q$. Here we will prove a sufficient condition for the Beilinson--Bloch conjecture over a global function field. This will come as a corollary of a statement which applies to general fibrations over $k$ (Theorem \ref{thmBBCritArbBase}). 

    \subsection{The Leray filtration}
    Let $Y$ be a smooth, geometrically integral variety over $k$. Let $f: X \to Y$ be a smooth projective morphism, and let $i \in \Z$.
	\begin{lem}[Decomposition theorem]\label{lemDecomp}
		There is an isomorphism
		\begin{gather*}
			Rf_* \Q_\ell(i) \cong \bigoplus\limits_{r \in \Z} R^r f_* (\Q_\ell(i))[-r]
		\end{gather*}
		in the bounded constructible derived category $D^b_c(Y, \Q_\ell)$.
	\end{lem}
    
	\begin{proof}
		This is a well-known consequence of the hard Lefschetz theorem \cite[Th\'eor\`eme (4.1.1)]{Deligne1980} and the decomposability criterion of \cite[Remarque 1.10]{Deligne1968}. 
	\end{proof}
	
	Consider the Leray spectral sequence
	\begin{gather}\label{eqLeray}
		E_2^{rs} = H^r(\ol{Y}, R^s \ol{f}_* \Q_\ell(i)) \implies H^{r + s}(\ol{X}, \Q_\ell(i))
	\end{gather}
	Degeneration of \eqref{eqLeray} at $E_2$ follows from Lemma \ref{lemDecomp}, proper base change and \cite[Proposition 1.2]{Deligne1968}. The sequence yields a descending, Galois-equivariant filtration 
	\begin{gather*}
		F^\bullet H^n(\ol{X}, \Q_\ell(i))
	\end{gather*}
	on the cohomology of $\ol{X}$. We can pull back the filtration to the Chow groups of $X$ via the cycle class map, defining
	\begin{gather*}
		M^\bullet \CH^i(X) = \cl^{-1}(F^\bullet H^{2i}(\ol{X}, \Q_\ell(i))).
	\end{gather*}
	We leave the dependency of this filtration on $f$ implicit in the notation. 
    \par
    Note that the same filtration was studied in relation to the Hodge and Tate conjectures by Arapura \cite{Arapura2022}. Following loc. cit. page 4, we may equivalently define the Leray filtration on cohomology via 
    \begin{gather*}
        F^\bullet H^n(\ol{X}, \Q_\ell(i)) = \Image[H^n(\ol{Y}, \tau_{\le n - \bullet} R \ol{f}_* \Q_\ell(i)) \to H^n(\ol{Y}, R \ol{f}_* \Q_\ell(i)) = H^n(\ol{X}, \Q_\ell(i))].
    \end{gather*}
    Here $\tau$ denotes the truncation with respect to the standard $t$-structure on $D^b_c(\ol{Y}, \Q_\ell)$. From this description it is clear that for any open $U \subseteq Y$, the pullback map $H^n(\ol{X}, \Q_\ell(i)) \to H^n(\ol{X}_U, \Q_\ell(i))$ is compatible with the respective Leray filtrations associated with the maps $X \to Y$ and $X_U \to U$. Likewise, the pullback map $\CH^i(X) \to \CH^i(X_U)$ is compatible with Leray filtrations.
    \par
    Let $\eta$ be the generic point of $Y$, $\eta_\infty$ the generic point of $\ol{Y}$, and $\ol{\eta} \to \ol{Y}$ a geometric generic point over $\eta_\infty$.
	\begin{lem}\label{lemGenericFiberCohLeray}
		Fix $j \in \Z$, and let $\mc{F} = R^j \ol{f}_{*} \Z_\ell(i)$. For $U$ a nonempty open subscheme of $Y$, the pullback induces an isomorphism
		\begin{gather*}
			H^0(\ol{U}, \mc{F}|_{\ol{U}}) \cong H^j(X_{\ol{\eta}}, \Z_\ell(i))^{\Gal(\ol{\eta} / \eta_\infty)}
		\end{gather*}
		of $\Z_{\ell}[G_k]$-modules.
	\end{lem}
       \begin{proof}
           By smooth proper base change, $\mc{F}|_{\ol{U}}$ is a lisse $\Z_\ell$-sheaf on $\ol{U}$, and its stalk at $\ol{\eta}$ is identified with $H^j(X_{\ol{\eta}},\Z_\ell(i))$. By \cite[Tag 0DV4]{stacks-project}, we have 
           \begin{align*}
                H^0(\ol{U},\mc{F}|_{\ol{U}}) &=\mc{F}_{\ol{\eta}}^{\pi_1(\ol{U},\ol{\eta})} \\
                &= \mc{F}_{\ol{\eta}}^{\Gal(\ol{\eta} / \eta_\infty)},
           \end{align*}
           because the $\Gal(\ol{\eta} / \eta_\infty)$-action factors through the map $\Gal(\ol{\eta} / \eta_\infty) \to \pi_1(\ol{U},\ol{\eta})$, which is surjective \cite[Tag 0BQI]{stacks-project}. Here $\pi_1$ denotes the \'etale fundamental group.
	\end{proof}
	\begin{rem}
		The statement of Lemma \ref{lemGenericFiberCohLeray} also holds with $\Q_\ell$ in place of $\Z_\ell$, as for a finitely-generated $\Z_\ell$-module $M$ on which a group $G$ acts, 
		\begin{gather*}
			M^G \otimes_{\Z_\ell} \Q_\ell \cong (M \otimes_{\Z_\ell} \Q_\ell)^G
		\end{gather*}
		via the natural map.
	\end{rem}

    We now study the interaction of the Leray filtration with weights. For this, we will need an elementary linear-algebraic lemma. 

    \begin{lem}\label{lemFilteredVS}
        Let $V$ and $W$ be vector spaces over a field, equipped with finite descending filtrations $(\Fil^\bullet V)_{\bullet \ge 0}$ and $(\Fil^\bullet W)_{\bullet \ge 0}$, such that $\Fil^0 V = V$ and $\Fil^0 W = W$. If $g: V \to W$ is a surjective map of filtered vector spaces, and for some $n \ge 0$, the map
        \begin{gather*}
            V / \Fil^n V \to W / \Fil^n W
        \end{gather*}
        induced by $g$ is injective, then the maps
        \begin{gather*}
            \Fil^n V \to \Fil^n W,\\
            \gr^n V \to \gr^n W
        \end{gather*}
        induced by $g$ are surjective.
    \end{lem}
    \begin{proof}
        This is immediate from the four-lemma applied to the commutative diagram with exact rows
        \[
            \begin{tikzcd}
                0 \arrow[r] & \Fil^n V \arrow[r] \arrow[d] & V\arrow[r] \arrow[d] & V / \Fil^n V \arrow[d] \\
                0 \arrow[r] & \Fil^n W \arrow[r] & W \arrow[r] & W / \Fil^n W.
            \end{tikzcd}
        \]
    \end{proof}

    \begin{lem}\label{lemResIsoFrobSim1}
        Let $Z \subseteq Y$ be a closed subvariety of codimension $c \ge 1$, and let $U = Y - Z$. Then for all $0 \le r < 2c$, the pullback
        \begin{gather}\label{eqPullbackGrLeray}
            H^r(\ol{Y}, R^{2i-r} \ol{f}_* \Q_\ell(i))^{\Frob_q \sim 1} \to H^r(\ol{U}, R^{2i-r} \ol{f}_{U, *} \Q_\ell(i))^{\Frob_q \sim 1}
        \end{gather}
        is an isomorphism. 
    \end{lem}
    \begin{proof}
        Let $j: U \hookrightarrow Y$ be the inclusion. By semi-purity \cite[Lemma 23.1]{MilneLEC}\footnote{Note that \cite[Lemma 23.1]{MilneLEC} is stated for a constant sheaf, but the argument extends to the lisse case, via \cite[Theorem 16.1]{MilneLEC} and \cite[Remark III.1.26]{Milne1980}.}, the map \eqref{eqPullbackGrLeray}
        is an isomorphism for $0 \le r < 2c - 1$, and injective for $r = 2c-1$. Lemma \ref{lemDecomp} implies that \eqref{eqPullbackGrLeray} is the map induced by
        \begin{gather}\label{eqFrobSim1Pullback}
            j^*: H^{2i}(\ol{X}, \Q_\ell(i))^{\Frob_q \sim 1} \to H^{2i}(\ol{X}_U, \Q_\ell(i))^{\Frob_q \sim 1}
        \end{gather}
        on the graded components of the respective Leray filtrations. By Proposition \ref{propHomologyProps}(ii) and (v), \eqref{eqFrobSim1Pullback} is surjective. Now Lemma \ref{lemFilteredVS} shows that when $r = 2c-1$, \eqref{eqPullbackGrLeray} is surjective, and hence an isomorphism.
    \end{proof}
    \begin{thm}\label{thmBBCritArbBase}
         Let $Z \subseteq Y$ be a closed subvariety of codimension $c \ge 1$, and let $U = Y - Z$. Assume that $B^i(X)$ holds. Then for all $0 \le r < 2c$, the cycle class map
        \begin{align*}
            \gr^r_M \CH^i(X_U) \otimes_{\Q} \Q_\ell \to H^r(\ol{U}, R^{2i-r} \ol{f}_{U, *} \Q_\ell(i))^{\Frob_q \sim 1}
        \end{align*}
        is an isomorphism. Here $\gr^r_M$ denotes the $r$-th associated graded component of the Leray filtration induced by $X_U \to U$. In particular, the kernel of the pullback to the generic fiber $\CH^i(X) \to \CH^i(X_\eta)$ is contained in $M^2 \CH^i(X)$. 
    \end{thm}
    \begin{proof}
        By the definition of the Leray filtration, the assumption $B^i(X)$, and Lemma \ref{lemDecomp}, the cycle class map is an isomorphism
        \begin{gather*}
            \gr^r_M \CH^i(X) \otimes_{\Q} \Q_\ell \to H^r(\ol{Y}, R^{2i-r} \ol{f}_* \Q_\ell(i))^{\Frob_q \sim 1}.
        \end{gather*}
        The diagram 
        \begin{equation}\label{eqChResSquare}
            \begin{tikzcd}
                \gr^r_M \CH^i(X) \otimes_\Q \Q_\ell \arrow[r] \arrow[d] & H^r(\ol{Y}, R^{2i-r} \ol{f}_* \Q_\ell(i))^{\Frob_q \sim 1} \arrow[d] \\
                \gr^r_M \CH^i(X_U) \otimes_\Q \Q_\ell \arrow[r] & H^r(\ol{U}, R^{2i-r} \ol{f}_{U, *} \Q_\ell(i))^{\Frob_q \sim 1}
            \end{tikzcd}
        \end{equation}
        commutes, the top arrow is an isomorphism, and the right arrow is also an isomorphism (by Lemma \ref{lemResIsoFrobSim1}). We now argue by induction on $r$ that the left arrow is surjective, and hence all the arrows in the square are isomorphisms. For $r = 0$, this claim is immediate from surjectivity of $\CH^i(X) \to \CH^i(X_U)$. For $0 < r \le 2c - 1$, the surjectivity of the left arrow follows from the inductive hypothesis and Lemma \ref{lemFilteredVS}.
        \par
        Because $c \ge 1$, all the arrows in the diagram \eqref{eqChResSquare} are isomorphisms for $r \in \{0, 1\}$. Taking the colimit of the diagram over nonempty open subschemes $U$ properly contained in $Y$, we find that the kernel of $\CH^i(X) \to \CH^i(X_\eta)$ is contained in $M^2 \CH^i(X)$.
    \end{proof}
    \begin{rem}
        The proof of Theorem \ref{thmBBCritArbBase} does not require the full strength of the hypothesis $B^i(X)$, but rather just that
        \begin{gather*}
            (\CH^i(X) / M^{2c} \CH^i(X)) \otimes_\Q \Q_\ell \to [H^{2i}(\ol{X}, \Q_\ell(i)) / F^{2c} H^{2i}(\ol{X}, \Q_\ell(i))]^{\Frob_q \sim 1}
        \end{gather*}
        is an isomorphism.
    \end{rem}

    \begin{prop}\label{propTateCriterion}
	    If the Tate conjecture $T^i(X)$ holds in codimension $i$ for $X$, then so does $T^i(X_\eta)$.
	\end{prop}
	\begin{proof}
		This follows from the decomposition theorem (Lemma \ref{lemDecomp}) and Lemma \ref{lemGenericFiberCohLeray}, applied to $R^{2i} \ol{f}_* \Q_\ell(i)$.
	\end{proof}

    \begin{rem}
        Proposition \ref{propTateCriterion} is not new; see e.g. \cite{Andre1996} and \cite{Ambrosi2018}. 
    \end{rem}

    \begin{rem}\label{remMotivicSurjGenericFiber}
		In the situation of Proposition \ref{propTateCriterion}, work of Jannsen quickly generalizes the proposition to arbitrary motivic cohomology groups of $X$, as follows. Fix integers $r$ and $s$, and consider the map
		\begin{gather*}
			g_{r, s}: H^r_{\mc{M}}(X, \Q(s)) \otimes_\Q \Q_\ell \to H^r(\ol{X}, \Q_\ell(s))^{G_{k}}.
		\end{gather*}
		from the motivic cohomology to the \'etale cohomology of $X$, as in \cite[Conjecture 12.4]{Jannsen1990}. Then if $g_{r, s}$ is surjective, the analogous map on the cohomology of the generic fiber
		\begin{gather*}
			H^r_{\mc{M}}(X_{k(Y)}, \Q(s)) \otimes_\Q \Q_\ell \to H^r(X_{\ol{k(Y)}}, \Q_\ell(s))^{G_{k(Y)}}
		\end{gather*}
		is also surjective. This claim follows from the argument of \cite[Theorem 12.16 b)]{Jannsen1990}. Note that the statement in loc. cit. assumes an additional 1-semisimplicity hypothesis, but the decomposition theorem (Lemma \ref{lemDecomp}) actually makes this unnecessary under our assumption that $f$ is smooth and projective. Jannsen also assumes the base $Y$ is a curve, but the needed part of his argument goes through for higher-dimensional $Y$. Proposition \ref{propTateCriterion} is the case  $(r, s) = (2i, i)$ of the more general statement.
	\end{rem}

    \subsection{The Beilinson--Bloch conjecture}
    We now specialize the above discussion to the case where the base of the fibration is a curve. First, we introduce some notation. For a constructible $\Q_\ell$-sheaf $\mc{F}$ on a finite-type $k$-scheme $X$, we define the zeta and $L$-functions of $\mc{F}$ via
	\begin{align*}
		Z(X, \mc{F}, t) &= \prod\limits_{x \in X^0} \chi(\mc{F}_{\ol{x}}, t^{[k(x):k]})^{-1} \in \Q_\ell[[t]], \\
		L(X, \mc{F}, s) &= Z(X, \mc{F}, q^{-s}).
	\end{align*}
	Here $X^0$ is the set of closed points of $X$, and $\ol{x} = \Spec \ol{k(x)}$ for a choice of algebraic closure $\ol{k(x)} / k(x)$. In all examples we consider, the function $Z(X, \mc{F}, t)$ will lie in $\Q(t) \subseteq \Q_\ell((t))$ (by a combination of the Grothendieck--Lefschetz trace formula and the Weil conjectures), and hence can be evaluated at a complex argument.
	\par
	For a global field $K$ and continuous $G_K$-representation on a finite-dimensional $\Q_\ell$-vector space $V$, we define the Hasse--Weil $L$-function via
	\begin{gather*}
		L(V, s) = \prod\limits_{w} \chi((V|_{G_w})^{I_w}, |\kappa_w|^{-s})^{-1}, 
	\end{gather*}
	where the product is over the set of equivalence classes of non-archimedean valuations of $K$, $K_w$ is the completion of $K$ at $w$, $G_w$ is the decomposition group of a choice of place of $\ol{K}$ over $w$, $I_w \subseteq G_w$ is the inertia subgroup, and $\kappa_w$ is the residue field of $w$.
    \par
	With this notation in place, recall the statement of the Beilinson--Bloch conjecture.
	\begin{conj}[$BB^i(V)$]\label{conjBB}
		Let $K$ be a global field, $V$ a smooth projective variety over $K$, and $i \in \Z$. Then
		\begin{equation*}
			\dim_\Q \CH^i(V)_0 = \ord_{s = i} L(H^{2i-1}(\ol{V}, \Q_\ell), s).
		\end{equation*}
	\end{conj}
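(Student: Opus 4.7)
Since the statement just posed is the Beilinson--Bloch conjecture itself, which is open in full generality, my proposal is to prove it under the sufficient hypothesis flagged in Theorem 1.4 of the introduction, namely that $Y$ admits a smooth projective integral model $f_U: X_U \to U$ over a dense open $U \subseteq C$ (with $C$ a smooth projective model of $K$ over $\F_q$), where the strong Beilinson conjecture $B^i(X_U)$ is known. The overall strategy is to compute both sides of the conjectured identity as cohomological invariants attached to the Leray spectral sequence of $f_U$, and then match them numerically using the decomposition theorem and weight/purity considerations.

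For the left-hand side, I would use theorem \ref{thmM1SurjectsOntoCH0FuncField} to identify $\CH^i(X_K)_0$ with a quotient of $M^1 \CH^i(X_U)$. Assuming $B^i(X_U)$, the cycle class map is an isomorphism $\CH^i(X_U) \otimes_\Q \Q_\ell \cong H^{2i}(\ol{X_U}, \Q_\ell(i))^{G_{\F_q}}$, and hence $M^1 \CH^i(X_U) \otimes \Q_\ell \cong F^1 H^{2i}(\ol{X_U}, \Q_\ell(i))^{G_{\F_q}}$. Since \eqref{eqLeray} degenerates, lemma \ref{lemDecomp} gives $F^1/F^2 \cong H^1(\ol{U}, R^{2i-1} \ol{f_U}_* \Q_\ell(i))$, and a weight argument (using Deligne's Weil II) shows that the Frobenius-invariants of $F^2$ and beyond do not contribute to the dimension we want: the relevant weights are strictly positive. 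The kernel of the restriction to $X_K$ is handled via the localization sequence, combined with $T_i$- and $SS_i$-type inputs from lemma \ref{lemTateForOpen} applied to fibers over closed points of $C \setminus U$.

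For the right-hand side, I would extend $R^{2i-1} \ol{f_U}_* \Q_\ell$ to a constructible sheaf $\mc{F}$ on $\ol{C}$ (using the direct image $j_*$ or, after possibly passing to a smooth compactification $X \to C$ and invoking the decomposition theorem for $Rf_* \Q_\ell$, the middle extension summand). Then $L(H^{2i-1}(\ol{Y}, \Q_\ell), s)$ and $L(C, \mc{F}, s)$ agree up to explicit bad local factors that come from inertia action at the missing points. Grothendieck's cohomological formula then rewrites $L(C, \mc{F}, s)$ as an alternating product of characteristic polynomials of $\Frob_q$ on $H^j(\ol{C}, \mc{F})$, and purity localizes the order of vanishing at $s = i$ onto the generalized $1$-eigenspace of $\Frob_q$ on $H^1(\ol{C}, \mc{F})(i)$; assuming $SS_i$-semisimplicity this reduces further to the genuine fixed subspace.

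The hard step will be matching $H^1(\ol{U}, R^{2i-1} \ol{f_U}_* \Q_\ell(i))^{G_{\F_q}}$ with $H^1(\ol{C}, \mc{F})(i)^{G_{\F_q}}$ while simultaneously showing that the bad local Euler factors on the $L$-function side and the fibral correction terms on the Chow side cancel. This is precisely where the decomposition theorem intervenes: it produces a clean direct sum decomposition of $Rf_* \Q_\ell$ into perverse sheaves on $C$, allowing the $H^1$ on $U$ and on $C$ to be compared summand by summand, with the discrepancy supported on skyscraper sheaves at bad points whose dimensions are controlled exactly by the local Euler factor degrees at those points. The bookkeeping of these skyscraper contributions, together with weight arguments to rule out accidental zeros or poles of the bad factors at $s = i$, is what I expect to be the main technical obstacle; this is also presumably the point at which the present paper improves on Jannsen's earlier criterion by replacing his per-fiber hypothesis with a global numerical argument.
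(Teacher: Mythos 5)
You correctly observe that the statement is an open conjecture and sensibly redirect to the criterion in Theorem~\ref{thmBBCriterion}. Your handling of the left-hand side is roughly aligned with the paper: theorem~\ref{thmM1SurjectsOntoCH0FuncField}, the isomorphism from $B^i(X_U)$, and Leray degeneration giving $F^1/F^2 \cong H^1(\ol{U}, R^{2i-1}\ol{f}_*\Q_\ell(i))$ are all used. But there is no need for a Weil~II weight argument to discard $F^2$: since $U$ is an \emph{affine} curve, Artin vanishing gives $H^p(\ol{U}, -) = 0$ for $p \ge 2$, so $F^2 H^{2i}(\ol{X}, \Q_\ell(i)) = 0$ outright. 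Your appeal to $T_i$/$SS_i$-inputs from fibers over $C \setminus U$ belongs to Corollary~\ref{corClosedPointsBBCriterion}, not to the base theorem, whose hypothesis already has $f_U$ smooth. The kernel of $M^1\CH^i(X_U) \to \CH^i(X_K)_0$ is controlled not by fibral corrections but by noting that $\CH^i(X_K)_0$ is the colimit of $M^1\CH^i(X_V)$ over opens $V \subseteq U$, with all transition maps surjective and all dimensions equal (each equals $\ord_{s=i} L$), so the colimit inherits the common dimension.

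The genuine gap is on the $L$-function side, where you declare a ``hard step'' (matching $H^1$ on $U$ with $H^1$ on $C$, skyscraper bookkeeping at bad points, canceling bad Euler factors) and do not carry it out. That step is not carried out in the paper either, because it is unnecessary. The missing idea is \cite[Lemme~(3.6.2)]{Deligne1980}: each Euler factor of $L(H^{2i-1}(X_{\ol{K}}, \Q_\ell), s)$ at a place outside $U$ is finite and nonzero at $s = i$, so $\ord_{s=i} L(H^{2i-1}(X_{\ol{K}}, \Q_\ell), s) = \ord_{s=i} L(\ol{U}, R^{2i-1}\ol{f}_*\Q_\ell, s)$ with no extension of the sheaf to $\ol{C}$ required. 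One then applies Grothendieck--Lefschetz entirely over $U$: Artin vanishing and Poincar\'e duality kill $H^0_c$, Weil~II rules out a zero of $\chi(H^2_c, q^{-s})$ at $s = i$, and hard Lefschetz with duality converts $H^1_c$ into $H^1(\ol{U}, R^{2i-1}\ol{f}_*\Q_\ell(i))$. No decomposition theorem on $C$, no intermediate extension, and no skyscraper accounting are needed; as written, your proposal leaves its central difficulty unresolved when the correct move is to see that the difficulty evaporates.
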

	\par
	We denote the truth value of Conjecture \ref{conjBB} for a given $V$ and $i$ by $BB^i(V)$. The conjecture also makes sense for a motive $M \in \mc{M}(K)$, and we similarly write $BB^i(M)$ for the truth value of the conjecture for $M$ and $i$.
	\par
    The connection of Theorem \ref{thmBBCritArbBase} to the Beilinson--Bloch conjecture arises by combining the Grothendieck--Lefschetz trace formula with some weight-related results from Weil II \cite{Deligne1980}. Throughout this subsection, let $C$ denote a smooth, geometrically integral curve over $k$, with function field $K$. 
    \begin{prop}\label{propSpectralL}
        Let $f: X \to C$ be a smooth projective morphism, with $X$ of pure dimension $d$. For $1 \le i \le d - 1$, we have that
        \begin{gather*}
            \ord\limits_{s=i} L(H^{2i-1}(X_{\ol{K}}, \Q_\ell), s) = \ord\limits_{t=1} \chi(H^1(\ol{C}, R^{2i-1} \ol{f}_* \Q_\ell(i)), t).
        \end{gather*}
    \end{prop}
    \begin{proof}
		By Lemma \ref{lemResIsoFrobSim1}, for any nonempty open $U \subseteq C$, the restriction map 
        \begin{gather*}
            H^1(\ol{C}, R^{2i-1} \ol{f}_* \Q_\ell(i))^{\Frob_q \sim 1} \to H^1(\ol{U}, R^{2i-1} \ol{f}_{U, *} \Q_\ell(i))^{\Frob_q \sim 1}
        \end{gather*}
        is an isomorphism. The $\Q_\ell$-dimension of this space is precisely $\ord\limits_{t=1} \chi(H^1(\ol{U}, R^{2i-1} \ol{f}_{U, *} \Q_\ell(i)), t)$, so we can and do assume without loss of generality that $C$ is affine.
        \par
        By \cite[Lemme (3.6.2)]{Deligne1980}, each of the factors in the Euler product defining $L(H^{2i-1}(X_{\ol{K}}, \Q_\ell), s)$ is holomorphic (and evidently nonzero) at $s = i$. Therefore
		\begin{gather*}
			\ord_{s=i} L(H^{2i-1}(X_{\ol{K}}, \Q_\ell), s) = \ord_{s=i} L(\ol{C}, R^{2i-1}\ol{f}_{*} \Q_\ell, s),
		\end{gather*}
		as the latter function is obtained by omitting finitely many factors from the Euler product. 
		\par
		Let $\mc{F} = R^{2i-1}\ol{f}_{*} \Q_\ell$. By the Grothendieck--Lefschetz trace formula \cite[Theorem VI.13.3]{Milne1980}, 
		\begin{gather}\label{eqGrotLef}
			L(\ol{C}, \mc{F}, s) = \frac{\chi(H^1_c(\ol{C}, \mc{F}), q^{-s})}{\chi(H^0_c(\ol{C}, \mc{F}), q^{-s})\chi(H^2_c(\ol{C}, \mc{F}), q^{-s})}.
		\end{gather}
		Artin vanishing \cite[Tag 0F0W]{stacks-project} and Poincar\'e duality \cite[Corollary II.7.1]{KW2001} imply that $H^0_c(\ol{C}, \mc{F}) = 0$. By \cite[Corollaire (1.4.3)]{Deligne1980} and the proof of the Weil conjectures in loc. cit., 
		\begin{gather*}
			\chi(H^2_c(\ol{C}, \mc{F}), q^{-i}) \neq 0.
		\end{gather*}
		Thus
		\begin{gather*}
			\ord_{s=i} L(H^{2i-1}(X_{\ol{K}}, \Q_\ell), s) = \ord_{s=i} \chi(H^1_c(\ol{C}, \mc{F}), q^{-s}).
		\end{gather*}
		\par
		Let $\mc{G} = R^{2d - 2i - 1} \ol{f}_{*} \Q_\ell(d-2i)$. Since the fiber dimension of $f$ is $d-1$, by \cite[Th\'eor\`eme (4.1.1)]{Deligne1980} there is a hard Lefschetz isomorphism 
		\begin{gather*}
			\lambda^{d-2i}: \mc{F} \xrightarrow{\sim} \mc{G},
		\end{gather*}
		given by repeatedly cupping with the class of a relatively ample line bundle $\lambda \in H^0(\ol{C}, R^2 \ol{f}_{*} \Q_\ell(1))$ (or given by the inverse of the map just described, if $d - 2i < 0$). This yields a Galois-equivariant isomorphism $H^1_c(\ol{C}, \mc{F}) \cong H^1_c(\ol{C}, \mc{G})$. By Poincar\'e duality, $\mc{G}(i)^\vee \cong \mc{F}(i-1)$, and $H^1_c(\ol{C}, \mc{G}(i)) \cong H^1(\ol{C}, \mc{F}(i))^\vee$, so 
		\begin{gather*}
			\ord_{s=i} L(H^{2i-1}(X_{\ol{K}}, \Q_\ell), s) = \ord_{t=1} \chi(H^1(\ol{C}, \mc{F}(i)), t),
		\end{gather*}
        as desired.
	\end{proof}
    \begin{cor}\label{corBBCriterion}
        Assume that $C$ is affine. Let $f: X \to C$ be a smooth projective morphism, with $X$ of pure dimension $d$. For $1 \le i \le d - 1$, if $B^i(X)$ holds, then $BB^i(X_K)$ holds.
    \end{cor}
    \begin{proof}
        Artin vanishing \cite[Tag 0F0W]{stacks-project} implies that $H^2(\ol{C}, R^{2i-2} \ol{f}_* \Q_\ell(i)) = 0$. Equivalently, 
        \begin{gather*}
            F^2 H^{2i}(\ol{X}, \Q_\ell(i)) = 0.
        \end{gather*}
        The hypothesis $B^i(X)$ likewise yields
        \begin{gather*}
            M^2 \CH^i(X) = 0.
        \end{gather*}
        Lemma \ref{lemGenericFiberCohLeray} and Theorem \ref{thmBBCritArbBase} imply that the pullback map
        \begin{gather*}
            \gr_0^M \CH^i(X) \to \CH^i_{\hom}(X_K)
        \end{gather*}
        is an isomorphism (where $\CH^i_{\hom}(X_K)$ is the Chow group modulo homological equivalence). From Lemma \ref{lemFilteredVS} and Theorem \ref{thmBBCritArbBase} we then find that
        \begin{gather*}
            M^1 \CH^i(X) \to \CH^i(X_K)_0
        \end{gather*}
        is an isomorphism, where we apply the lemma to the two-step filtration $\CH^i(X_\eta)_0 \subseteq \CH^i(X_\eta)$ on $\CH^i(X_\eta)$. As $B^i(X)$ implies that
        \begin{gather*}
            M^1 \CH^i(X) \otimes_\Q \Q_\ell \cong H^1(\ol{C}, R^{2i-1} \ol{f}_{*} \Q_\ell(i))^{\Frob_q \sim 1},
        \end{gather*}
        from Proposition \ref{propSpectralL} we now infer $BB^i(X_K)$.
    \end{proof}
    \begin{rem}\label{remAJ}
		Fix notation and hypotheses as in Corollary \ref{corBBCriterion}. Using Theorem \ref{thmBBCritArbBase} and \cite[Remark 12.17 b)]{Jannsen1990}, one may show that $B^i(X)$ implies that all the arrows in the commutative diagram 
		\[
			\begin{tikzcd}
				M^1 \CH^i(X) \otimes_{\Q} \Q_\ell \arrow[r] \arrow[d] & H^1(\ol{C}, R^{2i-1} \ol{f}_{*} \Q_\ell(i))^{\Frob_q \sim 1} \arrow[d] \\
				\CH^i(X_K)_{0} \otimes_{\Q} \Q_\ell \arrow[r] & H^1_{\cont}(G_K, H^{2i-1}(X_{\ol{K}}, \Q_\ell(i)))
			\end{tikzcd}
		\]
		are isomorphisms. The bottom arrow is the Abel--Jacobi map from $\CH^i(X_K)_0 \otimes \Q_\ell$ into continuous group cohomology. 
	\end{rem}
	
	We now relax the smoothness and affineness assumptions in Corollary \ref{corBBCriterion}, then refine it in the case $i = 1$. 
	
	\begin{cor}\label{corClosedPointsBBCriterion}
		Let $f: X \to C$ be a finite-type, flat, separated morphism such that $X_K$ is a smooth projective variety over $K$. Let $Z \subsetneq C$ be a finite, closed subscheme, with $U = C - Z$, such that $U$ is affine and $f_U$ is smooth and projective. Let $d = \dim X$, and $0 \le i \le d - 1$. If there exists a dense open $V \subseteq C$ such that $B_{d-i}(X_V)$ holds, and $T_{d-i}(X_{V \cap Z})$ and $SS_{d-i}(X_{V \cap Z})$ hold, then $BB^i(X_K)$ and $T^i(X_K)$ hold. 
	\end{cor}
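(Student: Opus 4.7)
The plan is to reduce to Theorems~\ref{thmBBCriterion} and~\ref{thmTateCriterion} applied to the restriction of $f$ over the dense open $W \cap V \subseteq C$. First, I would observe that since $Z$ is a nonempty finite set of closed points in the smooth projective curve $C$, the complement $V = C - Z$ is affine; the nonempty open subscheme $W \cap V \subseteq V$ is then itself affine, smooth, and geometrically integral over $\F_q$, with function field $K$. Because $f_V$ is smooth, the base change $f_{W \cap V} \colon X_{W \cap V} \to W \cap V$ is a smooth projective morphism, and $X_{W \cap V}$ is a smooth variety over $\F_q$ of dimension $d$ whose generic fiber is $X_K$.

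The central step is to verify $B^i(X_{W \cap V})$. The base-changed family $X_W$ contains $X_{W \cap Z}$ as a closed subscheme with open complement $X_{W \cap V}$, so Lemma~\ref{lemTateForOpen}(i) combines the three hypotheses $B_{d-i}(X_W)$, $T_{d-i}(X_{W \cap Z})$, and $SS_{d-i}(X_{W \cap Z})$ to yield $B_{d-i}(X_{W \cap V})$. Since $X_{W \cap V}$ is smooth of pure dimension $d$, Poincar\'e duality in the form~\eqref{eqSmoothHoCoIso} identifies $H_{2(d-i)}(\ol{X_{W \cap V}}, \Q_\ell(d-i))$ with $H^{2i}(\ol{X_{W \cap V}}, \Q_\ell(i))$ compatibly with cycle class maps, so $B_{d-i}(X_{W \cap V})$ is equivalent to $B^i(X_{W \cap V})$; in particular $T^i(X_{W \cap V})$ also holds.

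To conclude, apply Theorem~\ref{thmBBCriterion} to the smooth projective morphism $f_{W \cap V} \colon X_{W \cap V} \to W \cap V$ (the hypothesis $0 \le i \le d - 1$ is preserved) to obtain $BB^i(X_K)$, and apply Theorem~\ref{thmTateCriterion} similarly to obtain $T^i(X_K)$. There is no serious obstacle: the argument is essentially a packaging exercise, the only mild subtlety being the use of Lemma~\ref{lemTateForOpen}(i) to transport $B_{d-i}$ from the possibly non-smooth $X_W$ onto the smooth open $X_{W \cap V}$, where Poincar\'e duality converts it into the cohomological statement $B^i$ required by Theorem~\ref{thmBBCriterion}.
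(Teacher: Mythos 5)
Your proof is correct and takes essentially the same route as the paper: apply Lemma~\ref{lemTateForOpen}(i) to the closed immersion $X_{W \cap Z} \hookrightarrow X_W$ to get $B_{d-i}(X_{W \cap V}) = B^i(X_{W \cap V})$, then invoke Theorems~\ref{thmBBCriterion} and~\ref{thmTateCriterion} over the affine open $W \cap V$. Your version just spells out the details (affineness of $W \cap V$, smoothness of $f_{W\cap V}$, the Poincaré duality translation between $B_{d-i}$ and $B^i$) that the paper's one-line proof leaves implicit.
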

	\begin{proof}
		By Lemma \ref{lemTateForOpen}, the hypotheses imply that $B_{d-i}(X_U \cap X_V) = B^i(X_{U \cap V})$ holds, so that Proposition \ref{propTateCriterion} and Corollary \ref{corBBCriterion} apply.
	\end{proof}
    \begin{rem}
        Via an identical argument, one can generalize Theorem \ref{thmBBCritArbBase} to non-smooth morphisms. To avoid overloading the exposition, we omit the details. 
    \end{rem}
	
	\begin{thm}\label{thmBB1AndT1ImplyB1}
		Assume that $C$ is projective. Let $f: X \to C$ be a flat projective morphism, with $X$ smooth over $k$, and $X_K$ a smooth variety over $K$. Then $T^1(X)$ holds iff $BB^1(X_K)$ and $T^1(X_K)$ hold.
	\end{thm}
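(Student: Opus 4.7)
I start by choosing a dense affine open $V \subseteq C$ over which $f$ is smooth, which exists by generic smoothness because $X_K$ is smooth. The complement $Z = X \setminus X_V$ is a disjoint union of proper fibers of $f$ of pure dimension $d-1$ (where $d = \dim X$), and Frobenius acts by permutation on $H_{2(d-1)}(\bar{Z}, \Q_\ell(d-1))$, so $T_{d-1}(Z)$ and $SS_{d-1}(Z)$ hold automatically. Lemma \ref{lemTateForOpen} can therefore move freely between conjectures for $X$ and $X_V$ in codimension one.

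For the forward direction, I first upgrade $T^1(X)$ to $B^1(X)$. Since $\Pic^0(X)_{\red}(\F_q)$ is finite, $\CH^1(X) = \CH^1_{\alg}(X)$ with $\Q$-coefficients, and Lemma \ref{lemAdEqRelsOnDivisors} makes the cycle class map injective on $\CH^1(X) \otimes_\Q \Q_\ell$. Combined with $T^1(X)$, this produces a Galois-equivariant isomorphism $\CH^1(X) \otimes_\Q \Q_\ell \cong H^2(\bar X, \Q_\ell(1))^{G_{\F_q}}$, and Tate's classical equivalences for divisors on a smooth projective variety over a finite field then imply the semisimplicity of $\Frob_q$ at eigenvalue $1$ on $H^2(\bar X, \Q_\ell(1))$, so that $B^1(X)$ holds. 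Lemma \ref{lemTateForOpen}(i) upgrades this to $B^1(X_V)$, and Theorem \ref{thmBBCriterionIntro} yields $BB^1(X_K) \wedge T^1(X_K)$.

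For the backward direction, Lemma \ref{lemTateForOpen}(ii) reduces the claim to $T^1(X_V)$. By Lemma \ref{lemDecomp}, the Leray spectral sequence for $f_V : X_V \to V$ degenerates at $E_2$, and Artin vanishing on the affine curve $V$ leaves only $E_2^{0,2} = H^0(\bar V, R^2 f_{V,*} \Q_\ell(1))$ and $E_2^{1,1} = H^1(\bar V, R^1 f_{V,*} \Q_\ell(1))$ contributing to $H^2(\bar X_V, \Q_\ell(1))$. I show that cycle classes of divisors on $X_V$ surject onto the $G_{\F_q}$-invariants of each graded piece. For $E_2^{0,2}$, Lemma \ref{lemGenericFiberCohLeray} identifies $(E_2^{0,2})^{G_{\F_q}}$ with $H^2(X_{\bar K}, \Q_\ell(1))^{G_K}$, which $T^1(X_K)$ hits with divisor classes on $X_K$ extended to $X_V$ by closure. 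For $E_2^{1,1}$, Theorem \ref{thmM1SurjectsOntoCH0FuncField} combined with the cycle class produces an $\ell$-adic Abel--Jacobi map
\[
    AJ : \CH^1(X_K)_0 \otimes_\Q \Q_\ell \to (E_2^{1,1})^{G_{\F_q}},
\]
whose injectivity is Mordell--Weil for $\Pic^0(X_K)_{\red}$ together with the identification from Remark \ref{remAJ}. The Grothendieck--Lefschetz trace formula as in the proof of Theorem \ref{thmBBCriterion} computes $\ord_{t=1}\chi(E_2^{1,1}, t) = \ord_{s=1} L(H^1(X_{\bar K}, \Q_\ell), s)$, which by $BB^1(X_K)$ equals $\dim_{\Q_\ell} \CH^1(X_K)_0 \otimes_\Q \Q_\ell$. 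Since $\dim(E_2^{1,1})^{G_{\F_q}} \le \ord_{t=1}\chi$ in general, the injectivity of $AJ$ forces all three dimensions to coincide, so $AJ$ is bijective onto invariants, and the two pieces together yield $T^1(X_V)$.

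The main obstacle is establishing the injectivity of $AJ$: this reduces to combining Mordell--Weil for the abelian variety $\Pic^0(X_K)_{\red}$ over the function field $K$ with a careful identification of $(E_2^{1,1})^{G_{\F_q}}$ as a subspace of $H^1_{\cont}(G_K, H^1(X_{\bar K}, \Q_\ell(1)))$ along the lines of Remark \ref{remAJ}. The semisimplicity of $\Frob_q$ at $1$ on $E_2^{1,1}$ is then a byproduct of the resulting dimension count rather than a separate hypothesis that must be verified.
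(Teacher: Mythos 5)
Your forward direction is essentially the paper's: both upgrade $T^1(X)$ to $B^1(X)$ (via lemma \ref{lemAdEqRelsOnDivisors} and classical equivalences for divisors), observe that fibers over closed points automatically satisfy the relevant Tate and semisimplicity conjectures for dimension reasons, and apply the BB criterion.

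The backward direction diverges, and there is a genuine gap. You try to prove $T^1(X_V)$ by hitting each graded piece of $H^2(X_{\ol{V}},\Q_\ell(1))^{G_{\F_q}}$ separately, and for $(E_2^{1,1})^{G_{\F_q}}$ you invoke an Abel--Jacobi map $AJ:\CH^1(X_K)_0\otimes\Q_\ell \to (E_2^{1,1})^{G_{\F_q}}$ whose injectivity is claimed to follow from Mordell--Weil ``together with the identification from Remark \ref{remAJ}.'' But that identification is not established in the paper: Remark \ref{remAJ} only asserts that $SS^i(X)$ together with \emph{bijectivity} of the Abel--Jacobi map would imply $BB^i(X_K)$, deferring the comparison between the Leray-filtration cycle class and the continuous-cohomology Abel--Jacobi map to Jannsen. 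Moreover, for a fixed $V$ the map you want is not even well-defined without a colimit argument, since a cycle in $M^1\CH^1(X_V)$ supported on a fiber $X_y$ (for $y\in V$ closed) restricts to zero in $\CH^1(X_K)_0$ but has a nonzero class in $E_2^{1,1}$; one must shrink $V$ and control the transition maps, and then still carry out the nontrivial comparison with $H^1_{\cont}(G_K, H^1(X_{\ol{K}},\Q_\ell(1)))$ to invoke Mordell--Weil. This is exactly the technical machinery the paper deliberately avoids.

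The paper's actual backward argument is more elementary and self-contained: it first derives the inequality
\begin{gather*}
\dim_\Q \CH^1(X_K) \;\ge\; \dim_{\Q_\ell} H^2(X_{\ol V},\Q_\ell(1))^{G_{\F_q}}
\end{gather*}
from $BB^1(X_K)$, $T^1(X_K)$, the decomposition theorem (lemma \ref{lemDecomp}) and lemma \ref{lemGenericFiberCohLeray}; then it proves \emph{injectivity} of the cycle class on $\CH^1(X_V)\otimes\Q_\ell$ by a four-lemma argument on the localization sequence for $X_Z\hookrightarrow X\hookleftarrow X_V$, using lemma \ref{lemAdEqRelsOnDivisors} on the smooth proper $X$ and surjectivity of the cycle class map for $X_Z$ in top dimension. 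Combining the two forces equality everywhere, giving $T^1(X_V)$ and then $T^1(X)$ by lemma \ref{lemTateForOpen}(ii), with semisimplicity falling out of the dimension count rather than requiring Abel--Jacobi input. If you want to salvage your approach, you would need to actually carry out the comparison of \cite[Remark 12.17 b)]{Jannsen1990}; alternatively, replacing your $AJ$-injectivity step by the paper's four-lemma injectivity of $\cl$ on $\CH^1(X_V)$ would close the gap.
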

	\begin{proof}
		Let $d = \dim X$. Lemma \ref{lemAdEqRelsOnDivisors}, together with \cite[Theorem 2.9]{Tate1994}, shows that $T^1(X)$ implies $B^1(X) = B_{d-1}(X)$. For any closed point $y \in C$, \cite[Proposition (2.4)]{Laumon1976} implies that $B_{d-1}(X_y)$ holds, since $\dim X_y = d-1$. Thus Corollary \ref{corClosedPointsBBCriterion} applies to show $BB^1(X_K)$ and $T^1(X_K)$.
		\par
		Now we prove the converse. Let $U \subset C$ be an affine open such that $f_U$ is smooth. Assuming $BB^1(X_K)$, Proposition \ref{propSpectralL} shows that
		\begin{gather*}
			\dim_\Q \CH^1(X_K)_0 = \ord\limits_{t=1} \chi(H^1(\ol{U}, R^1 f_{\ol{U}, *} \Q_\ell(1)), t).
		\end{gather*}
		The assumption $T^1(X_K)$ and Lemma \ref{lemGenericFiberCohLeray} then imply that Chow group modulo homological equivalence $\CH^1_{\hom}(X_K)$ satisfies
        \begin{gather*}
            \dim_\Q \CH^1_{\hom}(X_K) \ge \dim_{\Q_\ell} H^0(\ol{U}, R^2 f_{\ol{U}, *} \Q_\ell(1))^{G_{k}}.
        \end{gather*}
        Then the decomposition theorem (Lemma \ref{lemDecomp}) applied to $f_U$ yields
		\begin{gather}\label{eqGenericChEqualsFrobMult}
			\dim_\Q \CH^1(X_K) \ge \dim_{\Q_{\ell}} H^2(\ol{X}_U, \Q_\ell(1))^{G_{k}}.
		\end{gather} 
		Let $Z = C - U$, and consider the commutative diagram with exact rows
		\[
		\begin{tikzcd}
			\CH_{d-1}(X_Z) \otimes_\Q \Q_\ell \arrow[r] \arrow[d] & \CH_{d-1}(X) \otimes_\Q \Q_\ell \arrow[r] \arrow[d] & \CH_{d-1}(X_U) \otimes_\Q \Q_\ell \arrow[r, overlay] \arrow[d] & 0 \\
			H_{2d-2}(\ol{X}_Z, \Q_\ell(d-1))^{\Frob_q \sim 1} \arrow[r] & H_{2d-2}(\ol{X}, \Q_\ell(d-1))^{\Frob_q \sim 1} \arrow[r] & H_{2d-2}(\ol{X}_U, \Q_\ell(d-1))^{\Frob_q \sim 1} \arrow[r, overlay] & 0.
		\end{tikzcd}
		\]
		The left vertical arrow is an isomorphism by \cite[Proposition (2.4)]{Laumon1976}, and the middle vertical arrow is injective by Lemma \ref{lemAdEqRelsOnDivisors}, so the right vertical arrow is injective by the four-lemma. As $\CH^1(X_U)$ surjects onto $\CH^1(X_K)$, we conclude from \eqref{eqGenericChEqualsFrobMult} and Poincar\'e duality that $T^1(X_U)$ holds. Lemma \ref{lemTateForOpen}(ii) now implies that $T^1(X)$ holds.
	\end{proof}
	
	When $X_K$ is geometrically integral, Theorem \ref{thmBB1AndT1ImplyB1} is implied by \cite[Theorem 1.1]{Geisser2021}, and is in fact equivalent to it (modulo a caveat), as we now explain.
	
	\begin{thm}\label{thmATGeneralization}
			Let $f: X \to C$ be a flat projective morphism, with $X$ a smooth variety over $k$, and $X_K$ smooth and geometrically integral over $K$. The following are equivalent:
		\begin{enumerate}[(i)]
			\item the Tate conjecture holds for divisors on $X$;
			\item the Tate conjecture holds for divisors on $X_K$, and the Birch and Swinnerton-Dyer conjecture holds for $\Alb(X_K)$;
			\item the Brauer group of $X$ is finite;
			\item the Tate conjecture holds for divisors on $X_K$, and the Tate--Shafarevich group of $\Alb(X_K)$ is finite.
		\end{enumerate}
	\end{thm}
	\begin{proof}
		The equivalence of (i) and (ii) follows from Theorem \ref{thmBB1AndT1ImplyB1}. Using the Kummer sequence and the Hochschild--Serre spectral sequence, one checks that (iii) implies (i), and (i) implies (iii) by \cite[Theorem 0.4(b)]{Milne1986}. Now (ii) is equivalent to (iv) by \cite[Theorem on page 541]{KT2003}.
	\end{proof}
	The equivalence of the statements (iii) and (iv) of Theorem \ref{thmATGeneralization} is \cite[Theorem 1.1]{Geisser2021} (with the caveat that Geisser allows $X$ to be proper over $C$, rather than projective). 
	\par
	We now want to apply Proposition \ref{propTateCriterion} and Corollary \ref{corBBCriterion} to constant varieties of abelian type over function fields.
    \begin{prop}\label{propBBAndTateAfterBaseChange}
		Assume that $C$ is affine. Let $f: X \to C$ be a smooth projective morphism, and $N / K$ be a finite field extension. Let $D$ be a smooth curve over the constant field $L$ of $N$, with function field $k(D) \cong N$, and $g: D \to C$ a map inducing the extension $N / K$ on function fields, such that $g$ factors as a composition of universal homeomorphisms and finite Galois covers. If $B^i(X_D)$ holds (where $X_D$ is considered as a scheme over $L$), then $B^i(X)$, $BB^i(X_K)$, and $T^i(X_K)$ hold.
	\end{prop}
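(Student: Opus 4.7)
The plan is to first establish $B^i(X)$, after which Theorems \ref{thmBBCriterion} and \ref{thmTateCriterion} (together with the observation that $B^i(X)$ implies $T^i(X)$, since $B^i$ subsumes $T^i$) will immediately yield $BB^i(X_K)$ and $T^i(X_K)$.

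The key observation for reducing to $B^i(X)$ is that the base change $g_X : X_V \to X$ of $g$ along $f$ again factors as a composition of universal homeomorphisms and finite \'etale Galois covers. This relies on the stability of both classes of morphisms under base change, which is formal: universal homeomorphisms by their defining property, and finite \'etale $G$-Galois covers $W' \to W$ because the base change along any map $X \to W$ is still finite \'etale with the same $G$-action, and a local computation of invariants (locally $(B \otimes_A C)^G = B^G \otimes_A C = C$) confirms that the quotient is still $X$. Both $X_V$ and $X$ are smooth and finite-type over $\F_q$, since $U$ and $V$ both are and $f$ and its base change are smooth projective; thus Lemma \ref{lemBi(X)AlongExts} applied to $g_X$ transfers $B^i$ from $X_V$ down to $X$. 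Should $B^i(X_V)$ be originally formulated over $L$ rather than $\F_q$, Lemma \ref{lemBi(X)BaseChange} first converts it to a statement over $\F_q$, so this causes no difficulty.

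I expect no serious obstacle here: the proof is essentially a synthesis of the preparatory lemmas of Section \ref{secGenConjs} with the criteria established earlier in this section. The only point requiring any care is the base-change stability of the factorization of $g$, and this is purely formal.
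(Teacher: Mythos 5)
Your proposal is correct in outline and adopts essentially the same strategy as the paper: reduce to $B^i(X)$ via the transfer lemmas \ref{lemBi(X)AlongExts} and \ref{lemBi(X)BaseChange}, then invoke Theorems \ref{thmBBCriterion} and \ref{thmTateCriterion}. There is, however, a small gap in the way you handle the change of base field. You write that Lemma \ref{lemBi(X)BaseChange} ``first converts'' $B^i(X_V)$ over $L$ to a statement over $\F_q$. But Lemma \ref{lemBi(X)BaseChange} applies to a scheme $W$ defined over $\F_q$ and its base change $W_L = W \times_{\F_q} L$: it converts $B^i(W_L)$ (over $L$) to $B^i(W)$ (over $\F_q$). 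Taking $W = X_V$ viewed as an $\F_q$-scheme, the base change $W_L = X_V \times_{\F_q} L$ is \emph{not} the original $L$-scheme $X_V$ but a disjoint union of $[L:\F_q]$ Galois twists of it. So before Lemma \ref{lemBi(X)BaseChange} can be cited, you must first observe that $B^i(X_V)$ over $L$ yields $B^i$ for this disjoint union of twists over $L$ (which holds because twisting does not affect Chow groups or geometric cohomology, and $B^i$ of a disjoint union is equivalent to $B^i$ of each piece). This is exactly the role played by the $\Res_{\F_q}$ notation and the ``disjoint union of copies of $X_V$'' observation in the paper's proof; your presentation elides it. The paper also reverses the order of operations, base-changing the whole diagram to $L$ first, applying Lemma \ref{lemBi(X)AlongExts} over $L$ to get $B^i(X_L)$, and only then using Lemma \ref{lemBi(X)BaseChange} to descend to $\F_q$; your proposed order (descend to $\F_q$ first, then transfer along $g_X$) works equally well once the disjoint-union observation is supplied. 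Your remarks on the stability of universal homeomorphisms and finite Galois covers under base change are correct and implicitly used by the paper.
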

	\begin{proof}
		If $g$ is a universal homeomorphism, then $L = k$ and $B^i(X_D)$ is equivalent to $B^i(X)$, so we may assume that $N / K$ and $g$ are finite Galois. We consider the diagram 
		\[
		\begin{tikzcd}
			 X_D / k \arrow[r] \arrow[d] & X \arrow[d] \\
			D / k \arrow[r] & C
		\end{tikzcd}
		\]
		of $k$-schemes and base change it to $L$, where the notation $D / k$ means $D$ considered as a $k$-scheme rather than an $L$-scheme, and likewise for $X_D / k$. As $(X_D / k)_L$ is a disjoint union of copies of $X_D$, $B^i((X_D / k)_L)$ holds, and hence $B^i(X_L)$ holds by Lemma \ref{lemBi(X)AlongExts}. Thus $B^i(X)$ holds by Lemma \ref{lemBi(X)BaseChange}, and $T^i(X_K)$ and $BB^i(X_K)$ now follow from Proposition \ref{propTateCriterion} and Corollary \ref{corBBCriterion}.
	\end{proof}

	\begin{prop}\label{propBBForAbMots}
		Assume that $C$ is projective. Let $X$ be a smooth projective variety over $k$, and assume that there exist finite field extensions $L / k$ and $N / k$ such that the motive of $X_L$ in $\mc{M}(L)$ is of abelian type, $C$ has an $N$-point, and $T^i(X_L \times_L C_L)$ and $T^{i-1}(X_N)$ hold (where $X_L \times_L C_L$ and $X_N$ are considered as schemes over $L$ and $N$, respectively). Then for all nonempty open $U \subseteq C$, there exists a nonempty open $U' \subseteq U$ such that $B^i(X \times_{k} U)$ holds. Also, $BB^i(X_K)$ and $T^i(X_K)$ hold. 
	\end{prop}
	\begin{proof}
		The base change of $X$ to the compositum of $L$ and $N$ has motive of abelian type, and hence has semisimple $\ell$-adic cohomology \cite[Lemme 1.9]{Kahn2003}. Lemma \ref{lemBi(X)BaseChange} thus implies that $SS^{i-1}(X_N)$ and $T^{i-1}(X_N)$ hold, where $X_N$ is considered as a scheme over $k$. Likewise, Lemmas \ref{lemBi(X)BaseChange} and \ref{lemTateEquivs} imply that $B^i(X \times_{k} C)$ holds. Now fix an $N$-point $c \in C$ with complement $W$, and use Lemma \ref{lemTateForOpen} to infer $B^i(X \times_{k} W)$. From Theorem \ref{thmBBCritArbBase} one obtains $B^i(X \times_{k} U')$ for all nonempty open $U' \subseteq W$. Similarly, from Corollary \ref{corClosedPointsBBCriterion} one infers $BB^i(X_K)$ and $T^i(X_K)$.
	\end{proof}
	
	\begin{rem}
		Using the Tate conjecture for divisors on abelian varieties over finite fields \cite{Tate1966}, Proposition \ref{propBBForAbMots} recovers the rank part of the Birch and Swinnerton-Dyer conjecture for constant abelian varieties over function fields \cite[Theorem 3]{Milne1968}.
	\end{rem}

    \begin{cor}\label{corBBForAbMotsBaseChange}
        Assume that $C$ is projective. Let $K' \subseteq K$ be a subfield of finite degree, and assume the extension $K / K'$ factors as a tower of purely inseparable extensions and Galois extensions. Let $V$ be a smooth projective variety over $K'$, and suppose that there exists a smooth projective variety $X$ over $k$ such that $V_K \cong X \times_{k} K$, and $X$ satisfies the hypotheses of Proposition \ref{propBBForAbMots} for some fixed $i \in \Z$. Then $BB^i(V)$ and $T^i(V)$ hold.
    \end{cor}
    \begin{proof}
        Let $C'$ be the smooth projective curve with function field $K'$. By spreading out $V$ to a fibration over a sufficiently small open subscheme of $C'$, the claim follows from Propositions \ref{propBBAndTateAfterBaseChange} and \ref{propBBForAbMots}.
    \end{proof}
	
	Proposition \ref{propBBForAbMots} can be used to derive the following result of Kahn.
	\begin{cor}[{\cite[Corollary 1]{Kahn2021}}]\label{corFinAlbKer}
		For $S$ a smooth, projective, geometrically integral surface over $k$ such that the motive of $\ol{S}$ is a summand of the motive of an abelian variety over $\ol{k}$, the kernel $T(S_K)$ of the Albanese map $\CH^2(S_K)_{0, \Z} \to \Alb(S_K)(K)$ is finite.
	\end{cor}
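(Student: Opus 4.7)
The plan is to apply Corollary \ref{corBBForAbMots} twice: once to $S$ with $i=2$ to control $\dim_\Q \CH^2(S_K)_0$, and once to the constant abelian variety $\Alb(S)_K$ with $i=1$ to control $\dim_\Q (\Alb(S_K)(K) \otimes_\Z \Q)$ via BSD. A Poincar\'e-duality computation matches the two resulting orders of vanishing of $L$-functions, so the classical surjectivity of the Albanese map for a smooth projective surface forces the kernel $T(S_K)$ to vanish.

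To verify the hypotheses of Corollary \ref{corBBForAbMots} at $i=2$, one spreads out: the idempotent correspondence defining $\mf{h}(\ol S)$ as a summand of an abelian variety motive is already defined over some finite extension $L/\F_q$, so $\mf{h}(S_L)$ is of abelian type. Enlarging $L$ to a finite $N$ over which $C$ has a rational point (which exists by Lang--Weil), the motives of $S_N$ and $S_L \times_L C_L$ are both of abelian type (the latter because $\mf{h}(C_L) \cong \mathbf{1} \oplus \mf{h}^1(\Jac(C_L)) \oplus \mathbf{1}(-1)$). The Tate conjecture in every codimension is known for such motives over finite fields by Tate's theorem on endomorphisms of abelian varieties, so $T^1(S_N)$ and $T^2(S_L \times_L C_L)$ hold, and $BB^2(S_K)$ follows:
\[
\dim_\Q \CH^2(S_K)_0 \;=\; \ord_{s=2} L(H^3(\ol{S_K}, \Q_\ell), s).
\]
Applying the same corollary to $\Alb(S)_K$ at $i=1$ yields $BB^1(\Alb(S)_K)$; combining this with the equality $\rank_\Z A(K) = \rank_\Z A^\vee(K)$ for dual abelian varieties (which are isogenous over $K$ via any $K$-polarization) gives
\[
\dim_\Q(\Alb(S_K)(K) \otimes_\Z \Q) \;=\; \ord_{s=1} L(H^1(\ol{\Alb(S)_K}, \Q_\ell), s).
\]

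To match the two orders of vanishing, Poincar\'e duality produces $G_{\F_q}$-equivariant isomorphisms $H^3(\ol S, \Q_\ell) \cong H^1(\ol S, \Q_\ell)^\vee(-2) \cong V_\ell \Alb(\ol S)(-2)$, and the Weil pairing gives $V_\ell \Alb(\ol S) \cong H^1(\ol{\Pic^0(S)_{\red}}, \Q_\ell)(1)$. Since $\Alb(S)$ and $\Pic^0(S)_{\red}$ are dual abelian varieties with equal $L$-functions, tracking Tate twists through these identifications yields $\ord_{s=2} L(H^3(\ol{S_K}, \Q_\ell), s) = \ord_{s=1} L(H^1(\ol{\Alb(S)_K}, \Q_\ell), s)$, and hence $\dim_\Q \CH^2(S_K)_0 = \dim_\Q(\Alb(S_K)(K) \otimes_\Z \Q)$. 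The Albanese map is surjective onto $\Alb(S_K)(K) \otimes_\Z \Q$ for a smooth projective surface (reducible to the algebraically closed case via Lemma \ref{lemCHInjectsUnderBaseExt}), so the dimension equality forces $T(S_K)$ to vanish. The main obstacle is the $L$-function bookkeeping in this final step; matching orders of vanishing through Poincar\'e duality and the Weil pairing is routine but requires care with Tate twists.
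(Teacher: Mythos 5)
Your argument correctly establishes the \emph{rational} statement: that the map $\CH^2(S_K)_0 \to \Alb(S_K)(K)\otimes_\Z\Q$ is injective. The route is slightly different from the paper's at one point — you apply Corollary~\ref{corBBForAbMots} at $i=1$ to the constant abelian variety $\Alb(S)$, whereas the paper simply applies it to $S$ itself to get $BB^1(S_K)$, then identifies $\CH^1(S_K)_0$ with $\Pic^0(S_K)(K)\otimes\Q$ via Lemma~\ref{lemMotiveCalcs} and uses duality of $\Pic^0$ and $\Alb$. Your Poincar\'e-duality bookkeeping (through $H^3(\ol S)\cong V_\ell \Alb(\ol S)(-2)$ and the Weil pairing, giving $L(H^3,s)=L(H^1,s-1)$) is correct and is in substance the argument of Beilinson's Lemma~5.1, which the paper cites at this step. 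So on the rational side you and the paper are doing essentially the same thing.

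However, there is a genuine gap at the final step. The corollary, following Kahn's original Corollary~1, is a statement about the \emph{integral} Albanese kernel $T(S_K)\subseteq \CH^2(S_K)^0_\Z$ (the notation in the statement is slightly abusive on this point, but the paper's own appendix defines $T(S)$ with integer coefficients, and the paper's proof talks of $T(S_K)$ being ``torsion'' and then ``finite'', which would be vacuous for a $\Q$-vector space). Your dimension count over $\Q$ shows only that $T(S_K)$ is a torsion group; it says nothing about whether that torsion group is finite, and for zero-cycles over a global field this is a real issue — there is no a priori bound on the torsion in the kernel. The paper closes this gap using \cite[Corollary 84]{Kahn2005} together with the localization exact sequence, and that ingredient is entirely absent from your write-up. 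Without it, you have proven $T(S_K)\otimes_\Z\Q=0$, which is a strictly weaker statement than finiteness of $T(S_K)$.

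A secondary, more minor point: you justify surjectivity of the rational Albanese map by ``reducing to the algebraically closed case via Lemma~\ref{lemCHInjectsUnderBaseExt}''. That lemma controls injectivity of pullback on Chow groups, not surjectivity of the Albanese map. What you actually need is the standard fact that the cokernel of $\CH_0(X)^0\to\Alb(X)(k)$ is torsion (bounded by the index of $X$), so that after $\otimes\Q$ the map is surjective; the lemma you cite doesn't deliver this. This is fixable but as stated the justification is misdirected.
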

	\begin{proof}
		By the Tate conjecture for divisors on abelian varieties, $T^1(S)$ holds. Similarly, $T^1(S \times_{k} C)$ holds, and $T^2(S \times_{k} C)$ follows from the hard Lefschetz theorem. Proposition \ref{propBBForAbMots} now implies that $BB^1(S_K)$ and $BB^2(S_K)$ hold. By the argument of \cite[Lemma 5.1]{Beilinson1987}, $T(S_K)$ is torsion. As in Kahn's original proof, we now note that by \cite[Corollary 84]{Kahn2005} and the localization exact sequence, $T(S_K)$ is finite.
	\end{proof}

    \begin{rem}\label{remJannsenContrastDetailed}
        In Corollaries \ref{corFinAlbKer} and \ref{corBBForEg}, we use cases of the Tate conjecture, together with Proposition \ref{propBBForAbMots}, to prove the Beilinson--Bloch conjecture for certain varieties $V$ over global function fields $K$. For each $V$ covered by these corollaries, the relevant cases of the Tate conjecture can also be combined with \cite[Corollary 4.10]{Jannsen2007} to show that $V$ satisfies the hypotheses of loc. cit. Theorem 5.3. In more detail, there exists a finite extension $N / K$ with the following properties: 
        \begin{enumerate}[(i)]
            \item there exists a smooth projective variety $X$ over the constant field $L$ of $N$ such that $V_N$ is isomorphic to $X \times_{L} N$;
            \item $X$ has motive of abelian type;
            \item for all finite extensions $E / L$, $X_E$ satisfies the full Tate conjecture;
            \item the product $X \times_{L} D$ satisfies the full Tate conjecture. Here $D$ is the smooth projective curve over $k$ with function field $N$.
        \end{enumerate}
        Then one argues as in the proof of \cite[Theorem 5.1]{Jannsen2007} to show that Jannsen's Theorem 5.3 applies to $V$.\footnote{The reason why we do not cite the statement of  \cite[Theorem 5.1]{Jannsen2007} directly here is that it seems to implicitly require an additional hypothesis. Specifically, with hypotheses and notation as in that theorem, to obtain the theorem's conclusion one must also assume that for all finite extensions $F' / F$, the assumptions of \cite[Corollary 4.10]{Jannsen2007} hold for $Y_{F'}$.} This theorem has consequences for all motivic cohomology groups of $V$: it implies that $H^i_{\mc{M}}(V, \Q(j)) = 0$ for $i - 2j \notin \{0, 1\}$, and computes the remaining motivic cohomology groups in terms of certain simpler invariants related to $\ell$-adic cohomology. In particular, it implies that the Abel--Jacobi map of Remark \ref{remAJ} is an isomorphism for $V$ and for all $i$. Jannsen's Theorem 5.3 also implies that Murre's conjecture holds for $V$ \cite[Conjecture 4.1]{Jannsen2007}.
        \par
        Since \cite[Theorem 5.3]{Jannsen2007} is not stated in terms of $L$-functions, deducing the Beilinson--Bloch conjecture for $V$ from Jannsen's result still requires some additional argument, which is ultimately supplied by Corollary \ref{corBBCriterion}. In a follow-up paper \cite{Broe2025}, we use Corollary \ref{corBBCriterion} to prove cases of the Beilinson--Bloch conjecture for varieties which we do not know to verify the conditions of \cite[Theorem 5.3]{Jannsen2007}. In particular, we do not control all motivic cohomology groups of these varieties, but just some of their Chow groups. See \cite[Section 6]{Broe2025}, and particularly loc. cit. Theorems 6.4 and 6.5. With the exception of the $d = 7$ case of the latter theorem, for the varieties covered by these results, it may be possible to use Lemma \ref{lemTateForOpen} to check the hypotheses of \cite[Theorem 12.16 a,b,c]{Jannsen1990} for the relevant Chow groups. We do not pursue this, as it would yield a weaker result than what we actually prove, again because Jannsen does not refer to $L$-functions.
    \end{rem}
	
	\section{CM elliptic curves}\label{secArithOfCMCurves}
	
	The remainder of the paper is dedicated to studying the Chow groups of powers of CM elliptic curves. In this section we recall some basic properties of such curves. Let $k$ be a field.
	\begin{lem}[{\cite[Theorem 1.1]{Oort1973}}]\label{lemConstAfterBaseChange}
		For any CM elliptic curve $E$ over $k$, there exists a CM elliptic curve $E'$ over a finite extension $L$ of the prime field of $k$, and a common field extension $N$ of both $k$ and $L$, with $N / k$ finite, such that $E_N$ and $E'_N$ are isogenous over $N$.
	\end{lem}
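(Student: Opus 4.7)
The plan is to reduce the lemma to the claim that $j(E)$ lies in the algebraic closure $\ol{k_0}$ of the prime field $k_0 \subseteq k$. Assuming this, I would take a finite extension $F/k_0$ containing $j(E)$ and construct $E'/F$ via an explicit Weierstrass model over $F$ with $j$-invariant $j(E)$. Setting $L$ to be the compositum inside $\ol{k}$ of $k$ with an embedded copy of $F$ yields a finite extension of $k$ (since $F/k_0$ is finite), and since $E_L$ and $E'_L$ share the same $j$-invariant they are twists of each other, so they become isomorphic---hence trivially isogenous---after absorbing into $L$ a further finite extension trivializing the twist. To secure $E'$ being CM over $F$ rather than merely over $\ol{F}$, I would transport a non-integer $\alpha \in \End_k(E)$ to an endomorphism of $E'_{\ol{F}}$ under the identification $E_{\ol{k}} \cong E'_{\ol{F}}$; this endomorphism is defined over some finite extension of $F$, and enlarging $F$ accordingly keeps $F/k_0$ finite.

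For the algebraicity of $j(E)$ in characteristic zero I would appeal to classical complex multiplication theory: after fixing an embedding $k \hookrightarrow \C$, one has $E_\C \cong \C / \Lambda$ where $\Lambda$ is, up to scaling, a fractional ideal of $\End^0_{\ol{k}}(\ol{E})$, and $j(\C/\Lambda)$ is then an algebraic integer. Hence $j(E) \in \ol{\Q} = \ol{k_0}$. In positive characteristic $p$, the geometrically supersingular case---where $\End^0_{\ol{k}}(\ol{E})$ is a quaternion algebra over $\Q$---is settled at once by the classical fact that every geometrically supersingular elliptic curve has $j$-invariant in $\F_{p^2}$.

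The main obstacle is the ordinary CM case in characteristic $p$, in which $F_E = \End^0_k(E)$ is an imaginary quadratic field acting via an order $\mc{O} \subseteq F_E$ and the $j$-invariant could a priori be transcendental. My plan is a spreading-out argument: fix $\alpha \in \End_k(E) \setminus \Z$, descend $(E,\alpha)$ to a finitely generated subfield $k_1 \subseteq k$, and extend to a smooth proper family $(\mc{E}, \widetilde{\alpha})$ over an integral finite-type $\F_p$-scheme $S$ with function field $k_1$, shrinking $S$ so that every geometric fiber is ordinary. For each closed point $s \in S$, Deuring's lifting theorem produces a characteristic zero CM lift of $(\mc{E}_s, \widetilde{\alpha}_s)$ with endomorphism order containing $\mc{O}$, whose $j$-invariant must therefore be a root of the Hilbert class polynomial $H_\mc{O}(X) \in \Z[X]$; consequently $j(\mc{E}_s)$ lies in the finite set of roots of $H_\mc{O}(X) \bmod p$ in $\ol{\F_p}$. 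The morphism $j \colon S \to \mathbb{A}^1_{\F_p}$ then takes only finitely many values on closed points of the irreducible scheme $S$ and is forced to be constant, giving $j(E) = j(\mc{E}_\eta) \in \ol{\F_p}$ and completing the reduction.
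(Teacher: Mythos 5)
The paper does not give its own proof of this lemma; it is cited directly to \cite[Theorem 1.1]{Oort1973} and used as a black box, with the added remark that one can even achieve an isomorphism rather than an isogeny over $L$. Your argument is therefore a self-contained alternative. The overall strategy---reduce to showing $j(E)$ is algebraic over the prime field $k_0$, then construct $E'$ over a finite extension of $k_0(j(E))$ via a Weierstrass model, enlarge so that a transported non-integral endomorphism is rational over $F$, and pass to a compositum inside $\ol{k}$---is correct, and in fact produces the stronger ``isomorphic after finite extension'' statement that the paper mentions but does not need. The endgame (twists of the same $j$-invariant become isomorphic after a finite extension; endomorphisms of an elliptic curve over $\ol{F}$ are unchanged under further extension, so $\alpha'$ descends to some finite $F'/F$) is standard and handled appropriately.

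There is one genuine slip in the spreading-out step, though it is easily repaired. You assert that the Deuring lift of $(\mc{E}_s, \widetilde{\alpha}_s)$ has $j$-invariant a root of $H_{\mc{O}}(X)$ where $\mc{O} = \Z[\alpha]$. But the endomorphism ring $\mc{O}'$ of the lift may strictly contain $\mc{O}$ (indeed, for the reduction to be ordinary, $\mc{O}'$ must have conductor prime to $p$, which $\mc{O}$ need not), and the $j$-invariant of the lift is a root of $H_{\mc{O}'}$, not of $H_{\mc{O}}$: the Hilbert class polynomial of $\mc{O}$ records curves with endomorphism ring \emph{exactly} $\mc{O}$. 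The repair: there are only finitely many orders $\mc{O}'$ of the imaginary quadratic field with $\mc{O} \subseteq \mc{O}'$, so replace $H_{\mc{O}}$ by the finite product $\prod_{\mc{O}' \supseteq \mc{O}} H_{\mc{O}'}$. The closed-fiber $j$-invariants then still land in the finite set of roots of this polynomial modulo $p$, and the rest of the argument---a morphism from an irreducible finite-type $\F_p$-scheme to $\mathbb{A}^1_{\F_p}$ hitting only finitely many closed points must have image a single closed point, so the generic fiber's $j$-invariant lies in $\ol{\F}_p$---goes through as you wrote it.
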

	Indeed, one can even require that $E_N$ and $E'_N$ are isomorphic over $N$, but we will only need the weaker statement.
	\par
	Now let $k \cong \F_q$ be a finite field, with $q = p^e$ for $p$ prime. Let $E$ be an ordinary elliptic curve over $k$, and write
    \begin{gather*}
        \chi(H^1(\ol{E}, \Q_\ell), t) = (1-\alpha t)(1-\ol{\alpha} t),
    \end{gather*}
    so $\alpha$ and $\ol{\alpha}$ are algebraic integers. Let $F = \Q(\alpha) \cong \End^0_k(E)$.
	
	\begin{lem}\label{lemFrobEigenvalue}
		The following are true:
		\begin{enumerate}[(i)]
			\item The ideal $p \mc{O}_F \subset \mc{O}_F$ splits as $p \mc{O}_F = \mf{p}_1 \mf{p}_2$ for two distinct prime ideals $\mf{p}_1, \mf{p}_2$ of $\mc{O}_F$, such that $\alpha \mc{O}_F = \mf{p}_1^e$ and $\ol{\alpha} \mc{O}_F = \mf{p}_2^e$.
			\item For $r > 0$ and any Weil $q$-integer $\beta$ of weight $w$ with $0 \le w < r$, $\alpha^r \beta$ is not a power of $q$. In particular, for all $s \ge 2$, $q^s / \alpha^{2s-1}$ is not an algebraic integer.
			\item For any Weil $q$-integer of weight one $\beta \notin \{\alpha, \ol{\alpha}\}$, and any $r, s \in \Z$, $\alpha^r \beta \neq q^s$.
		\end{enumerate}
	\end{lem}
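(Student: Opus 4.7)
The plan is to obtain the arithmetic factorization in (i) from standard Honda--Tate / Deuring theory, and then to parlay it into (ii) and (iii) by comparing complex absolute values with $\mf{p}$-adic valuations in $F$.

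For (i), ordinariness of $E$ is equivalent to the trace $a = \alpha + \ol{\alpha}$ being coprime to $p$. The characteristic polynomial $t^2 - at + q$ of $\Frob_q$ on $H^1(\ol{E}, \Q_\ell)$ then reduces modulo $p$ to $t(t - \ol{a})$ with two distinct roots, so by Hensel's lemma it splits in $\Z_p$ with one root of $p$-adic valuation $0$ and the other of valuation $e$. It follows that $F \otimes_\Q \Q_p$ is a product of two copies of $\Q_p$, so $p \mc{O}_F = \mf{p}_1 \mf{p}_2$ splits. Labeling $\mf{p}_1$ so that $v_{\mf{p}_1}(\alpha) > 0$, and using the relation $\alpha \ol{\alpha} = q$ together with the fact that complex conjugation (being nontrivial with trivial decomposition group) swaps $\mf{p}_1$ and $\mf{p}_2$, one immediately reads off $v_{\mf{p}_1}(\alpha) = e$ and $v_{\mf{p}_2}(\alpha) = 0$, so $\alpha \mc{O}_F = \mf{p}_1^e$ and $\ol{\alpha} \mc{O}_F = \mf{p}_2^e$.

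For (ii), suppose $\alpha^r \beta = q^s$. Taking the complex absolute value under any embedding $F \hookrightarrow \C$ gives $q^{(r+w)/2} = q^s$, hence $s = (r+w)/2$. Part (i) then yields $v_{\mf{p}_1}(\beta) = se - re = e(w-r)/2$, which is negative since $w < r$, contradicting the integrality of $\beta$. For the ``in particular'' claim, if $\gamma := q^s / \alpha^{2s-1}$ were an algebraic integer, then $\gamma \in F$ with $|\iota(\gamma)| = q^{1/2}$ for every embedding $\iota : F \hookrightarrow \C$, so $\gamma$ would be a Weil $q$-integer of weight $1$; but the relation $\alpha^{2s-1} \gamma = q^s$ with $r = 2s-1 > 1 = w$ would then contradict the main claim.

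For (iii), assume $\alpha^r \beta = q^s$ for some integers $r, s$. Complex absolute values again force $r = 2s-1$, and (i) yields $v_{\mf{p}_1}(\beta) = (1-s)e$ and $v_{\mf{p}_2}(\beta) = se$. Integrality of $\beta$ forces both valuations to be nonnegative, so $s \in \{0, 1\}$, and the equation $\alpha^r \beta = q^s$ then directly gives $\beta = \alpha$ (case $s=0$, $r=-1$) or $\beta = \ol{\alpha}$ (case $s=1$, $r=1$, using $\alpha \ol{\alpha} = q$), contradicting the assumption $\beta \notin \{\alpha, \ol{\alpha}\}$. The only step of real substance is (i); once that factorization is in hand, the rest is bookkeeping with valuations and archimedean absolute values, and no significant obstacle is expected.
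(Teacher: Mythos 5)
Your proof is correct and takes essentially the same approach as the paper: establish the factorization $\alpha\mc{O}_F=\mf{p}_1^e$, $\ol{\alpha}\mc{O}_F=\mf{p}_2^e$ in (i), then deduce (ii) and (iii) by comparing archimedean absolute values (to pin down $s$ in terms of $r$ and the weight) with $\mf{p}_i$-adic valuations (to rule out integrality). The only cosmetic difference is in (i), where you split $p$ via Hensel's lemma applied to $t^2-at+q$ while the paper argues directly that no prime above $p$ can contain both $\alpha$ and $\ol\alpha$ (else $\Tr(\alpha)\in p\Z$, contradicting ordinariness); both are short and equivalent.
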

	\begin{proof}
		Since $\alpha \ol{\alpha} = q$, each prime ideal of $\mc{O}_F$ dividing $\alpha \mc{O}_F$ must lie over the rational prime $p$. For any prime $\mf{p} \in \Spec \mc{O}_F$ lying over $p$, $\mf{p}$ cannot contain both $\alpha$ and $\ol{\alpha}$, or else $\Tr(\alpha) = \alpha + \ol{\alpha} \in \mf{p} \cap \Z = p \Z$, which would imply that $E$ was supersingular \cite[Exercise V.5.10]{Silverman2009}. So $\alpha \mc{O}_F$ and $\ol{\alpha} \mc{O}_F$ have no prime divisors in common, which implies (i). 
		\par
		For (ii), note that (i) implies that for $s < r$, $q^s / \alpha^r$ is not an algebraic integer. 
		\par
		For (iii), suppose that $\alpha^r \beta = q^s$, so that
		\begin{gather*}
			\beta = q^s / \alpha^r = \alpha^{s-r} \ol{\alpha}^s.
		\end{gather*}
		As $\beta$ is an algebraic integer and $\alpha \mc{O}_F$ and $\ol{\alpha} \mc{O}_F$ are coprime, we must have $s, s - r \ge 0$. Since $\alpha$ and $\beta$ are both Weil $q$-numbers of weight one, the only possibilities are $r = s = 1$ and $s = 0$, $r = -1$, which both contradict the assumption $\beta \notin \{\alpha, \ol{\alpha}\}$.
	\end{proof}
	Throughout, we will use the notation $\otimes_{F}^g H^1(\ol{E}, \Q_\ell)$ as shorthand for 
	$\bigotimes_{F \otimes_\Q \Q_\ell}^g H^1(\ol{E}, \Q_\ell)$. One may check that
	\begin{gather*}
		\chi(\otimes_{F}^g H^1(\ol{E}, \Q_\ell), t) = (1-\alpha^g t) (1-\ol{\alpha}^g t).
	\end{gather*}
	
	\section{Decomposition by endomorphisms}
	Let $E$ be an elliptic curve over a field $k$, with $F = \End^0_k(E)$ an imaginary quadratic field. There is an induced action of $F$ on the Chow--K\"unneth summand $\mf{h}^1(E)$ of $\mf{h}(E)$, which we may then consider as a motive with coefficients in $F$. In the case where $k$ has characteristic zero, the following result has essentially already appeared in \cite[Example 2.8]{Cao2018}. We give a different proof here over an arbitrary field.
	
	\begin{lem}\label{lemH1Dual}
		The following are true:
		\begin{enumerate}[(i)]
			\item $\overline{\mf{h}^1(E)}(1)$ is the dual of $\mf{h}^1(E)$ in $\mc{M}(k)_F$;
			\item $\mf{h}^1(E) \otimes_F \overline{\mf{h}^1(E)}(1) \cong \mathbf{1}_F$.
		\end{enumerate}
	\end{lem}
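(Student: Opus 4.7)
The plan is to establish (i) using the canonical principal polarization of $E$ together with the identification of the Rosati involution on $F = \End^0_k(E)$ with complex conjugation, and then to deduce (ii) from (i) via a conservativity argument. The motivic Poincar\'e pairing
\begin{equation*}
\mu : \mf{h}^1(E) \otimes_\Q \mf{h}^1(E) \to \mathbf{1}_\Q(-1),
\end{equation*}
defined using the class of the diagonal in $\CH^1(E \times_k E)$ together with the Chow--K\"unneth projectors, is perfect and yields $\mf{h}^1(E)^\vee \cong \mf{h}^1(E)(1)$ in $\mc{M}(k)_\Q$.

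For (i), the key calculation is that for any isogeny $\phi \in F$, the projection formula gives $\mu(\phi^* \alpha, \beta) = \mu(\alpha, \hat\phi^* \beta)$, where $\hat\phi$ denotes the dual isogeny, and the identity $\phi \hat\phi = \deg\phi = N_{F/\Q}(\phi) = \phi \overline{\phi}$ forces $\hat\phi = \overline{\phi}$. Equivalently, the Rosati involution attached to the canonical principal polarization of $E$ acts as complex conjugation on $F$, so that $\mu$ is $F$-sesquilinear: $\mu(\phi x, y) = \mu(x, \overline{\phi} y)$. Under the decomposition \eqref{productDecompOverK} of $\mf{h}^1(E) \otimes_\Q \mf{h}^1(E)$, sesquilinearity forces $\mu$ to annihilate the summand $\mf{h}^1(E) \otimes_F \mf{h}^1(E)$ and to factor as
\begin{equation*}
\overline{\mu} : \mf{h}^1(E) \otimes_F \overline{\mf{h}^1(E)} \to \mathbf{1}_F(-1)
\end{equation*}
in $\mc{M}(k)_F$. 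In $\ell$-adic cohomology $H^1(\overline{E}, \Q_\ell)$ is free of rank one over $F \otimes_\Q \Q_\ell$ by faithfulness of the $F$-action, and a dimension count shows that $\overline{\mu}$ realizes to the restriction of the perfect Poincar\'e pairing to a summand on which it remains nondegenerate, hence $\overline{\mu}$ is perfect. Therefore $\overline{\mf{h}^1(E)}(1) \cong \mf{h}^1(E)^\vee$, proving (i).

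For (ii), statement (i) reduces the claim to showing that the coevaluation $\eta : \mathbf{1}_F \to \mf{h}^1(E) \otimes_F \mf{h}^1(E)^\vee$ is an isomorphism, i.e.\ that $\mf{h}^1(E)$ is an invertible object in $\mc{M}(k)_F$. Letting $\tau : \mf{h}^1(E) \otimes_F \mf{h}^1(E)^\vee \to \mathbf{1}_F$ denote the canonical trace, the composite $\tau \circ \eta$ lies in $\End(\mathbf{1}_F) = F$, and its $\ell$-adic realization is the trace of the identity on the free rank-one $F \otimes_\Q \Q_\ell$-module $H^1(\overline{E}, \Q_\ell)$, namely $1$. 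Hence $\tau \circ \eta = \id_{\mathbf{1}_F}$, so $\eta \circ \tau$ is an idempotent endomorphism of $\mf{h}^1(E) \otimes_F \mf{h}^1(E)^\vee$ with image $\mathbf{1}_F$. The complementary summand vanishes under $\ell$-adic realization, and since the motive $\mf{h}^1(E) \otimes_F \mf{h}^1(E)^\vee$ is of abelian type and hence finite-dimensional, corollary \ref{realDetectsZero} forces this complement to be zero, yielding (ii).

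The main obstacle is formulating the Rosati-involution step at the motivic level: the projection-formula computation must be translated into an identity of correspondences in $\CH^1(E \times_k E)$ relating the graph of $\phi$ to that of $\overline{\phi}$ under transposition, so that the $F$-sesquilinearity of $\mu$ holds in $\mc{M}(k)_\Q$ and not merely after $\ell$-adic realization.
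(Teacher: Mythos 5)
Your proof is correct and rests on the same key input as the paper's — namely that the Rosati involution attached to the canonical polarization of $E$ restricts to complex conjugation on $F$ — but packages the two parts somewhat differently. For (i), the paper works directly with the correspondence $\psi_r = \pi \circ \Gamma_r \circ \pi$ for $r = \sqrt{-D}$: from $\Gamma_r^t \circ \Gamma_r = (\deg r)\cdot\Delta = D\cdot\Delta$ (an identity in $\CH^1(E\times E)$, by \cite[\S1.10]{Scholl1994}) one gets $\psi_r^t\circ\psi_r = D = \psi_{-r}\circ\psi_r$, and invertibility of $\psi_r$ together with lemma~\ref{lemMotiveCalcs} gives $\psi_r^t = \psi_{-r}$. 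This is exactly the $F$-sesquilinearity of your pairing $\mu$, so the ``obstacle'' you flag at the end is resolved by this direct computation in the Chow group: the relation holds in $\CH^1(E\times E)$, not merely after $\ell$-adic realization. For (ii), the arguments genuinely diverge. The paper invokes a general result of André (Corollaire 3.19 of \cite{André2003}) on invertibility of a finite-dimensional object of rank $\pm 1$, while you reprove the relevant special case by hand: $\tau\circ\eta$ realizes to a unit in $F\otimes_\Q\Q_\ell$, so $\eta\circ\tau$ is an idempotent whose complementary summand has vanishing realization and hence vanishes by corollary~\ref{realDetectsZero}. Your version is more self-contained and makes the mechanism transparent; the paper's is shorter. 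One small caveat: the categorical trace of $\id_{\mf{h}^1(E)}$ in $\mc{M}(k)_F$ picks up a Koszul sign because $\mf{h}^1(E)$ is concentrated in odd cohomological degree, so the realization of $\tau\circ\eta$ is $-1$ rather than $1$; this does not affect the argument since $-1$ is still a unit, but the sentence ``namely $1$'' should be corrected.
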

	\begin{proof}
		By definition, the dual of $\mf{h}^1(E)$ in $\mc{M}(k)_F$ is $M(1)$, where $M = (E, \pi^t, 0)$ is cut out by the transpose of the idempotent $\pi$ defining $\mf{h}^1(E)$. In this case, $\pi = \Delta - [E \times_k 0] - [0 \times_k E]$ (where $\Delta$ denotes the diagonal), so $\pi^t = \pi$, and $\mf{h}^1(E)$ and $M$ have the same underlying motive in $\mc{M}(k)_\Q$.
		\par
		The $F$-action on $M$ is the transpose of the action on $\mf{h}^1(E)$: for $D$ a positive integer such that $r = \sqrt{-D} \in \End_k(E)$, $r$ acts on $\mf{h}^1(E)$ via the cycle $\psi_r = \pi \circ \Gamma_r \circ \pi \in \CH^1(E \times E)$, and on $M$ by $\psi_r^t$. We see from \cite[\hphantom{}1.10]{Scholl1994}  that $\Gamma_r^t \circ \Gamma_r$ is multiplication by the degree $D$ of the isogeny $r$ on $\mf{h}(E)$, and likewise $\psi_r^t \circ \psi_r$ is multiplication by $D$ on $\mf{h}^1(E)$. But $\psi_{-r} \circ \psi_{r} = D$ on $\mf{h}^1(E)$ also. Lemma \ref{lemMotiveCalcs} implies that $\End_{\mc{M}(k)}(\mf{h}^1(E)) \cong F$, so we must have $\psi_r^t = \psi_{-r}$. In other words, the action of $F$ on $M$ is the conjugate of the action on $\mf{h}^1(E)$, and $M \cong \overline{\mf{h}^1(E)}$.
		\par
		To see (ii), note that $\mf{h}^1(E)$, considered as an object of $\mc{M}(k)_F$, is oddly finite-dimensional in the sense of \cite[Definition 3.7]{Kimura2005}, and has rank $\Tr(\id_{\mf{h}^1(E)}) = 1$. Now \cite[Corollaire 3.19]{Andre2005} implies that 
		\begin{gather*}
			\mf{h}^1(E) \otimes_F \mf{h}^1(E)^\vee \cong \mf{h}^1(E) \otimes_F \overline{\mf{h}^1(E)}(1)
		\end{gather*}
		is isomorphic to the unit $\mathbf{1}_F$ of $\mc{M}(k)_F$.
	\end{proof}
	\begin{cor}\label{corQTensorPowerCMDecomp}
		For $g \ge 1$, there is an isomorphism in $\mc{M}(k)_F$
		\begin{equation}\label{h1gdecomp}
			\otimes_\Q^g \mf{h}^1(E) \cong \bigoplus_{\substack{i + 2j = g \\ i, j \ge 0}} a_{i, j}\cdot (\otimes_F^{i} \mf{h}^1(E))(-j), 
		\end{equation}
		where
		\begin{equation}\label{eqEgDecompClosedForm}
			a_{i, j} = \begin{cases}
				\binom{i+2j}{j} & i \ge 1 \textrm{ and } j \ge 0 \\
                \binom{2j-1}{j} & i = 0 \textrm{ and } j \ge 1 \\
                1 & (i, j) = (0, 0),
			\end{cases}
		\end{equation}
		and the $F$-action on $\otimes_\Q^g \mf{h}^1(E)$ is inherited from the first tensor factor. When $E$ is ordinary, this is the unique decomposition of $\otimes_\Q^g \mf{h}^1(E)$ into summands of the form $(\otimes_F^i \mf{h}^1(E))(-j)$, up to non-canonical isomorphism.
	\end{cor}
	\begin{proof}
		Let $m \ge 1$. Using formula \eqref{productDecompOverK} and Lemma \ref{lemH1Dual}(ii),
		\begin{align}
			\mf{h}^1(E) \otimes_\Q \otimes_F^m \mf{h}^1(E)  &\cong \otimes_\Q^2 \mf{h}^1(E) \otimes_F \otimes_F^{m-1} \mf{h}^1(E)\nonumber \\ 
			&\cong (\otimes_F^2 \mf{h}^1(E) \oplus \mathbf{1}_F(-1)) \otimes_F \otimes_F^{m-1} \mf{h}^1(E)  \nonumber \\ \label{eqCMDecompRecurrence}
			&\cong \otimes_F^{m+1} \mf{h}^1(E) \oplus (\otimes_F^{m-1} \mf{h}^1(E))(-1).
		\end{align}
        When $m = 0$,
        \begin{gather}\label{eqCMDecompRecurrenceBd}
            \mf{h}^1(E) \otimes_\Q \otimes_F^0 \mf{h}^1(E) \cong \mf{h}^1(E) \otimes_\Q \mathbf{1}_F \cong 2 \cdot \mf{h}^1(E).
        \end{gather}
		Now inducting on $g$ to write
		\begin{align*}
			\otimes_\Q^{g+1} \mf{h}^1(E) &\cong \mf{h}^1(E) \otimes_\Q \otimes_\Q^{g} \mf{h}^1(E) \\
			&\cong \mf{h}^1(E) \otimes_\Q \bigoplus_{i + 2j = g} b_{i, j}\cdot (\otimes_F^{i} \mf{h}^1(E))(-j), 
		\end{align*}
		then applying \eqref{eqCMDecompRecurrence} and \eqref{eqCMDecompRecurrenceBd} shows that the $b_{i, j}$ satisfy the recurrence relation
        \begin{equation}
			b_{i, j} = \begin{cases}
				0 & i < 0 \text{ or } j < 0 \\
				1 & i \ge 0 \text{ and } j = 0 
                \\
                2 b_{0, j} + b_{2, j-1} & i = 1 \text{ and } j > 0
                \\
				b_{i-1, j} + b_{i+1, j-1} & \text{ otherwise}.
			\end{cases}
		\end{equation}
        This uniquely determines the $b_{i, j}$, by induction on $i + 2j$.
        \par
        Let us check that the proposed solution \eqref{eqEgDecompClosedForm} satisfies the recurrence, so that $a_{i, j} = b_{i, j}$. For this purpose we will set $a_{i, j} = 0$ for $i < 0$ or $j < 0$. Clearly $a_{i, j} = 1$ for $i \ge 0$ and $j = 0$. For $i \ge 2$ and $j > 0$, we have 
        \begin{gather*}
            a_{i-1, j} + a_{i+1, j-1} = \binom{i+2j-1}{j} + \binom{i+2j-1}{j-1} = \binom{i+2j}{j} = a_{i, j},
        \end{gather*}
        by Pascal's identity. For $i = 0$ and $j > 0$, we have
        \begin{gather*}
            a_{-1, j} + a_{1, j-1} = a_{1, j-1} = \binom{2j-1}{j-1} = \binom{2j-1}{j} = a_{0, j}.
        \end{gather*}
        For $i = 1$ and $j > 0$, we have
        \begin{gather*}
            2 a_{0, j} + a_{2, j-1} = 2 \binom{2j-1}{j} + \binom{2j}{j-1}.
        \end{gather*}
        Now 
        \begin{gather*}
            2 \binom{2j-1}{j} = \frac{2j}{j} \cdot \frac{(2j-1)!}{j!(j-1)!} = \binom{2j}{j},
        \end{gather*}
        so 
        \begin{gather*}
            2 \binom{2j-1}{j} + \binom{2j}{j-1} = \binom{2j}{j} + \binom{2j}{j-1} = \binom{2j+1}{j} = a_{1, j},
        \end{gather*}
        as required. 
        \par
        The final claim about the uniqueness of the decomposition of $\otimes^g_\Q \mf{h}^1(E)$ is implied by Theorem \ref{thmUniquenessOfCMDecomp}. 
	\end{proof}
	
	In fact, we have a more general uniqueness result for this kind of decomposition, which we will later use in our computation of the transcendental motive of the cube of an ordinary CM elliptic curve (Theorem \ref{thmTransMotOfE3}). 
	
	\begin{thm}\label{thmUniquenessOfCMDecomp}
		Suppose there exists an isomorphism in $\mc{M}_{\hom}(k)_\Q$
		\begin{gather}\label{eqCMDecomp1}
			\Psi: \bigoplus\limits_{1 \le i \le n} \left[\otimes_{F_i}^{r_i} \mf{h}^1(C_i)\right](s_i)_{\hom} \cong \bigoplus\limits_{1 \le j \le m} \left[\otimes_{L_j}^{t_j} \mf{h}^1(D_j)\right](u_j)_{\hom},
		\end{gather}
		where each $C_i$ (resp. $D_j$) is an ordinary elliptic curve over $k$ with CM by $F_i$ (resp. $L_j$), and $r_i,\ s_i,\ t_j,\ u_j \in \Z$ with $r_i,\ t_j \ge 0$. Then $m = n$, and there exists a permutation $\sigma$ of $\{1,...,n\}$ such that for all $i$, $r_i = t_{\sigma(i)}$, $s_i = u_{\sigma(i)}$, and if $r_i \neq 0$, $F_i \cong L_{\sigma(i)}$.
	\end{thm}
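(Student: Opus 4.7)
The approach is to reduce to the case where $k$ is a finite field, pass to $\ell$-adic cohomology, and then use the arithmetic of Weil numbers to match summands.

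First I would apply lemma \ref{lemConstAfterBaseChange} to each of the finitely many curves $C_1,\dots,C_n,D_1,\dots,D_m$ to find a finite extension $L/k$ containing a finite field $\F_q$, over which each curve becomes isogenous to an ordinary CM elliptic curve defined over $\F_q$. Since isogenous elliptic curves have canonically isomorphic $\mf{h}^1$ (lemma \ref{lemMotiveCalcs}), the isomorphism \eqref{eqCMDecomp1} base-changes to one of the same shape with all $C_i, D_j$ defined over $\F_q$, and I may assume $k = \F_q$. The $\ell$-adic realization then sends \eqref{eqCMDecomp1} to an isomorphism of $\Frob_q$-modules; in particular, the multisets of $\Frob_q$-eigenvalues on the two sides agree. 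For an ordinary CM elliptic curve $C$ with associated Weil $q$-number $\alpha$, the summand $(\otimes_F^r \mf{h}^1(C))(s)_{\hom}$ has underlying $\Q_\ell$-cohomology of dimension two, on which $\Frob_q$ acts with eigenvalues $\alpha^r q^{-s}$ and $\overline\alpha^r q^{-s}$ (or the single eigenvalue $q^{-s}$ with multiplicity two when $r=0$).

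Next I would recover $(r_i, s_i)$ from each eigenvalue pair via $p$-adic valuations. Fix an embedding $\iota:\overline\Q \hookrightarrow \overline\Q_p$ and write $q = p^e$. By lemma \ref{lemFrobEigenvalue}(i), $v_p(\iota(\alpha_i)) \in \{0, e\}$, so the unordered pair of $p$-adic valuations of the two eigenvalues of the $i$-th summand is $\{-e s_i,\, r_i e - e s_i\}$, which recovers both $r_i$ and $s_i$. Matching pairs on the two sides of \eqref{eqCMDecomp1} yields a bijection $\sigma:\{1,\dots,n\} \to \{1,\dots,m\}$ (so in particular $m = n$) with $r_i = t_{\sigma(i)}$ and $s_i = u_{\sigma(i)}$.

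Finally, suppose $r := r_i \ge 1$; I need to show $F_i \cong L_{\sigma(i)}$. The eigenvalue matching forces $\alpha_i^r = \beta^r$ for some $\beta \in \{\beta_{\sigma(i)}, \overline{\beta_{\sigma(i)}}\}$. Assume for contradiction that $F_i \neq L_{\sigma(i)}$ and set $K = F_i \cdot L_{\sigma(i)}$, a biquadratic field. Writing $\alpha_i \mc{O}_{F_i} = \mf{p}_1^e$ and $\beta \mc{O}_{L_{\sigma(i)}} = \mf{q}_1^e$ via lemma \ref{lemFrobEigenvalue}(i), the equality $\alpha_i^r \mc{O}_K = \beta^r \mc{O}_K$ together with unique factorization of ideals in $\mc{O}_K$ gives $\mf{p}_1 \mc{O}_K = \mf{q}_1 \mc{O}_K$. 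Applying the element of $\Gal(K/\Q)$ that is complex conjugation on $F_i$ and the identity on $L_{\sigma(i)}$ sends the left-hand side to $\mf{p}_2 \mc{O}_K$ and fixes the right-hand side, so $\mf{p}_1 \mc{O}_K = \mf{p}_2 \mc{O}_K$; contracting back to $\mc{O}_{F_i}$ forces $\mf{p}_1 = \mf{p}_2$, contradicting the ordinarity of $C_i$. I expect this last step to be the main obstacle, since it is where one must convert the loose information carried by Frobenius eigenvalues into the precise field-theoretic datum $F_i$.
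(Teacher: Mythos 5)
Your overall strategy -- reduce to a finite field, take $\ell$-adic realizations, and analyze Frobenius eigenvalues via lemma~\ref{lemFrobEigenvalue} -- is exactly the paper's, but there is a genuine gap in the reduction step, plus a smaller one in the matching step.

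The main problem is the reduction to $k = \F_q$. You write that lemma~\ref{lemConstAfterBaseChange} produces a finite extension $L/k$ ``containing a finite field $\F_q$'' over which the curves descend to $\F_q$. In characteristic zero this is false: lemma~\ref{lemConstAfterBaseChange} only descends the curves to a number field $M$, and $L$ is an extension of $k$, hence contains no finite field. Even in positive characteristic, the isomorphism $\Psi$ lives over $L$ (which may be infinite, e.g.\ a function field), not over $\F_q$, and morphisms in $\mc{M}_{\hom}(L)$ between motives pulled back from $\F_q$ need not be defined over $\F_q$. The paper handles this by a different mechanism: since morphisms of motives are algebraic cycles, $\Psi$ descends to a finitely generated subfield $N \supseteq M$; the Galois action on the realizations then factors through $G_M$; and in characteristic zero one further reduces $M$ at a prime of good ordinary reduction, using the existence of infinitely many such primes and lemma~\ref{lemRedGalRepIso}. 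None of this appears in your proposal, and it is not a cosmetic omission.

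The matching step also needs more care. You recover $(r_i, s_i)$ from ``the unordered pair of $p$-adic valuations of the two eigenvalues of the $i$-th summand,'' but the hypothesis only gives you equality of the total eigenvalue \emph{multisets}, not a correspondence between summands. To get a bijection of summands you must first show that the partition of eigenvalues into pairs is recoverable from the multiset alone. This is true -- when $r_i \ge 1$ lemma~\ref{lemFrobEigenvalue}(ii) shows the pair consists of two irrational Galois-conjugate numbers (so the pairing is forced), and when $r_i = 0$ the eigenvalue is rational and occurs with even multiplicity -- but it has to be said. The paper sidesteps this by working with the product of characteristic polynomials in $\Q[t]$ and invoking unique factorization into irreducibles (each $r_i \ge 1$ summand contributing an irreducible quadratic, each $r_i = 0$ summand a squared linear), which is cleaner.

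Your argument for $F_i \cong L_{\sigma(i)}$ via ideal factorization in the biquadratic compositum is correct and is a genuine alternative to the paper's, but it is heavier than necessary: once one knows the irreducible quadratic factor $(1 - q^{-s_i}\alpha_i^{r_i}t)(1 - q^{-s_i}\overline{\alpha_i}^{r_i}t)$ is shared (which the paper extracts from unique factorization), its splitting field is $\Q(\alpha_i^{r_i}) = F_i$ by lemma~\ref{lemFrobEigenvalue}(ii), and likewise $L_{\sigma(i)}$ on the other side, so $F_i \cong L_{\sigma(i)}$ immediately.
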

	
	The idea of the proof of Theorem \ref{thmUniquenessOfCMDecomp} is to reduce to the case where $k$ is a finite field, then apply Lemma \ref{lemFrobEigenvalue}. The reduction step requires the following standard lemmas. 
	
	\begin{lem}\label{lemRedGalRepIso}
		Let $R$ be a discrete valuation ring, $K$ its fraction field and $\kappa$ its residue field, with $\character(\kappa) \neq \ell$. Let $E$ be an ordinary elliptic curve over $K$ with CM by $F$, having good, ordinary reduction $E_\kappa$. Then $E_\kappa$ also has CM by $F$, and there is a canonical $F \otimes_\Q \Q_\ell$-linear isomorphism of $\Gal(\ol{\kappa} / \kappa)$ representations 
		\begin{gather*}
			H^1(E_{\ol{K}}, \Q_\ell) \cong H^1(E_{\ol{\kappa}}, \Q_\ell).
		\end{gather*}
	\end{lem}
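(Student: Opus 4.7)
My strategy is to spread $E$ out to an abelian scheme over $R$, apply smooth and proper base change to identify the two cohomology groups, and use the unramifiedness of the Galois representation (the easy half of N\'eron--Ogg--Shafarevich) to match the Galois actions.

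First, since $E_\kappa$ is an abelian variety, $E$ has good reduction and extends uniquely, up to unique isomorphism, to an abelian scheme $f: \mc{E} \to \Spec R$ with special fiber $E_\kappa$. By the N\'eron mapping property, every $K$-endomorphism of $E$ extends to an $R$-endomorphism of $\mc{E}$, and the restriction map $\End^0_R(\mc{E}) \to \End^0_\kappa(E_\kappa)$ is injective, since a nonzero isogeny of $\mc{E}$ remains nonzero on the special fiber. This yields $F \hookrightarrow \End^0_\kappa(E_\kappa)$, and since $E_\kappa$ is ordinary its endomorphism algebra has $\Q$-dimension at most two, so $F \xrightarrow{\sim} \End^0_\kappa(E_\kappa)$ and $E_\kappa$ has CM by $F$.

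Next, smooth and proper base change gives that $R^1 f_* \Q_\ell$ is a lisse $\Q_\ell$-sheaf on $\Spec R$. Fix a geometric point $\bar s$ over the closed point and a compatible geometric generic point $\bar\eta$ specializing to $\bar s$. The resulting specialization map is an isomorphism
\[
H^1(E_{\ol{\kappa}}, \Q_\ell) = (R^1 f_* \Q_\ell)_{\bar s} \xrightarrow{\sim} (R^1 f_* \Q_\ell)_{\bar\eta} = H^1(E_{\ol{K}}, \Q_\ell).
\]
The $F$-actions on the two sides are both induced by the $R$-endomorphisms of $\mc{E}$, and specialization of a lisse sheaf commutes with any endomorphism of the sheaf, so the map is $F \otimes_\Q \Q_\ell$-linear.

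The one subtle point is Galois equivariance. The left side carries its natural $\Gal(\ol{\kappa}/\kappa)$-action from the stalk description. On the right, lisseness of $R^1 f_* \Q_\ell$ at the closed point implies that the $\Gal(\ol{K}/K)$-action is unramified and factors through the quotient onto $\Gal(\ol{\kappa}/\kappa)$ determined by the chosen decomposition group; the specialization map then becomes $\Gal(\ol{\kappa}/\kappa)$-equivariant by naturality of stalks of a lisse sheaf on $\Spec R$. I do not anticipate any serious obstacle: once the abelian scheme $\mc{E}$ is produced, the remaining arguments are routine appeals to smooth and proper base change and the theory of lisse sheaves on a Dedekind base.
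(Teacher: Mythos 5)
Your proof is correct and takes essentially the same route as the paper, which cites the universal property of the N\'eron model (for the extension of endomorphisms) together with either Serre--Tate's Lemma 2 or smooth proper base change (for the $\Gal$-equivariant comparison of $\ell$-adic cohomology). You simply spell out the details that the paper leaves implicit: the N\'eron mapping property plus ordinariness to pin down $\End^0_\kappa(E_\kappa) \cong F$, and the lisse-sheaf/specialization argument --- the easy direction of N\'eron--Ogg--Shafarevich --- to identify the Galois representations after choosing a decomposition group.
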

	\begin{proof}
		This follows from the universal property of the N\'eron model, which gives a map $\End^0_K(E) \to \End^0_\kappa(E_\kappa)$ compatible with the isomorphism of \cite[Lemma 2]{ST1966}. 
	\end{proof}

    \begin{lem}\label{lemSimGoodRed}
        If $E_1,...,E_n$ are CM elliptic curves over a number field $L$, there exist infinitely many primes of $L$ where all of the $E_i$ simultaneously have good, ordinary reduction.
    \end{lem}
    \begin{proof}
        By \cite[Theorem 13.12]{Lang1987}, if $\mf{p} \in \Spec \mc{O}_L$ is a prime of good reduction for $E_i$, then $E_i$ has ordinary reduction at $\mf{p}$ iff $\mf{p}$ lies a over a rational prime which splits in the CM-field of $E_i$. The claim thus follows from the Chebotarev density theorem.
    \end{proof}
	
	\begin{proof}[Proof of Theorem \ref{thmUniquenessOfCMDecomp}]
		Taking $\ell$-adic cohomology on both sides of the isomorphism \eqref{eqCMDecomp1}, then comparing dimensions shows that $m = n$.
		\par
		Extending $k$ if necessary, we may assume without loss of generality that all $C_i$ and $D_j$, and their respective geometric endomorphism algebras, are defined over a finite extension $M \subseteq k$ of the prime field of $k$. Via a standard spreading out argument, we may further assume that $k$ is finitely generated, and that $M$ is algebraically closed in $k$. Taking the $\ell$-adic realization of \eqref{eqCMDecomp1}, we obtain 
		\begin{gather}\label{eqSpecialIsoOverFGField}
			\bigoplus\limits_{1 \le i \le n} \left[\otimes_{F_i}^{r_i} H^1((C_i)_{\ol{k}}, \Q_\ell)\right](s_i) \cong \bigoplus\limits_{1 \le j \le n} \left[\otimes_{L_j}^{t_j} H^1((D_j)_{\ol{k}}, \Q_\ell)\right](u_j).
		\end{gather}
		Since all $C_i$, $D_j$ are defined over $M$, the $G_k$-action in \eqref{eqSpecialIsoOverFGField} factors through the surjective restriction $G_k \to G_M$, and we thus reduce to the case where $k$ is finite over the prime field.
		\par
		Suppose that $k$ is a number field. By Lemma \ref{lemSimGoodRed}, there exists a prime $\mf{p} \in \Spec \mc{O}_k$ where all $C_i$ and $D_j$ simultaneously have good, ordinary reduction, such that $\ell$ is invertible in the residue field $\kappa_{\mf{p}}$. Restricting the representations appearing in \eqref{eqSpecialIsoOverFGField} to the decomposition group $G_\mf{p}$ of a prime of $\ol{k}$ over $\mf{p}$, and applying Lemma \ref{lemRedGalRepIso}, allows us to replace $k$ with a finite field in \eqref{eqSpecialIsoOverFGField}.
		\par
		We may thus assume that $k \cong \F_q$ is finite. Consider the determinant of the operator $1 - \Frob_q t$ acting on each side of \eqref{eqSpecialIsoOverFGField}. By the Weil conjectures, this gives an equality of polynomials
		\begin{gather*}
			\prod\limits_{1 \le i \le n} (1-q^{-s_i} \alpha_i^{r_i} t)(1-q^{-s_i} \ol{\alpha_i}^{r_i} t) = \prod\limits_{1 \le j \le n} (1-q^{-u_j} \beta_j^{t_j} t)(1-q^{-u_j} \ol{\beta_j}^{t_j} t),
		\end{gather*}
		where $\alpha_i \in F_i$, $\beta_j \in L_j$ are algebraic integers. It follows that up to reordering the $\beta_j$, for all $i$,
		\begin{gather}\label{eqFrobCharPolysOverFiniteField}
			(1-q^{-s_i} \alpha_i^{r_i} t)(1-q^{-s_i} \ol{\alpha_i}^{r_i} t) = (1-q^{-u_i} \beta_i^{t_i} t)(1-q^{-u_i} \ol{\beta_i}^{t_i} t).
		\end{gather}
		If $r_i = t_i = 0$, then $s_i = u_i$. Otherwise, by Lemma \ref{lemFrobEigenvalue}, the above polynomial is irreducible over $\Q$, and $F_i \cong L_i$ is its splitting field. To complete the proof it suffices to show that $r_i = t_i$ and $s_i = u_i$ in this latter case.
		\par
		Comparing like coefficients in equation \eqref{eqFrobCharPolysOverFiniteField}, we see 
		\begin{align}\label{eqWeights}
			q^{r_i - 2s_i} &= q^{t_i  -2u_i}, \\
			q^{-s_i} \Tr(\alpha_i^{r_i}) &= q^{-u_i} \Tr(\beta_i^{t_i}).
		\end{align}
		Combining these two equations yields
		\begin{gather}\label{eqFrobCharPolyTrace}
			\Tr(\alpha_i^{r_i}) = q^{\frac{1}{2}(r_i-t_i)} \Tr(\beta_i^{t_i}).
		\end{gather}
		Recall that $r_i$ and $t_i$ are positive. By Lemma \ref{lemFrobEigenvalue}, both $\Tr(\alpha_i^{r_i})$ and $\Tr(\beta_i^{t_i})$ have valuation zero at all primes of $\mc{O}_{F_i}$ over $p$,  so equation \eqref{eqFrobCharPolyTrace} implies $r_i = t_i$. Now equation \eqref{eqWeights} shows $s_i = u_i$. 
	\end{proof}
	
	We now prove some cases of the Tate conjecture.
	\begin{lem}\label{lemNoInvariants}
		Let $E$ be an ordinary elliptic curve over a finitely generated field $k$ with CM by $F$. Then
		\begin{enumerate}[(i)]
			\item for $r \ge 1$ and $s \in \Z$,
			\begin{gather*}
				H^\bullet(\otimes^r_F \mf{h}^1(E), \Q_\ell(s))^{G_k} = 0;
			\end{gather*}
			\item For $X$ a smooth projective variety over $k$ of dimension $d$, $r > 0$, $t \ge 0$, and $s \in \Z$, such that either $r > t$ or $r > 2d - t$, 
			\begin{gather*}
				[H^\bullet(\otimes^r_F \mf{h}^1(E), \Q_\ell(s)) \otimes_{\Q_\ell} H^t(\ol{X}, \Q_\ell)]^{G_k} = 0.
			\end{gather*}
		\end{enumerate}
	\end{lem}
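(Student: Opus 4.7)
The plan is to reduce both statements to the case where $k$ is a finite field, then invoke lemma \ref{lemFrobEigenvalue}(ii) on the Frobenius eigenvalues. For the reduction, I would argue as in the proof of theorem \ref{thmUniquenessOfCMDecomp}: applying lemma \ref{lemConstAfterBaseChange} and enlarging $k$ by a finite extension, I may assume $E$ descends to a finite extension $M$ of the prime field. The defining cycles of $\otimes_F^r \mf{h}^1(E)$, together with $X$ in case (ii), then descend to a finitely generated subfield $N \supseteq M$ of $k$. Spreading out over a finite-type integral base scheme $S$ with function field $N$ and with $\ell$ invertible, such that $X$ extends smooth-projectively and the $\ell$-adic cohomology becomes a lisse sheaf, I specialize at a closed point $x \in S$ whose finite residue field $\F_q$ is a place of good ordinary reduction for $E$. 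Smooth proper base change together with lemma \ref{lemRedGalRepIso} then identifies the $G_{\F_q}$-representation on the specialization with the analogous object constructed directly over $\F_q$, in a manner respecting the $F$-action and Tate twists. Any nonzero $G_k$-invariant restricts to a nonzero $\Frob_q$-fixed vector on this specialization, so it suffices to prove the vanishing over $\F_q$.

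Over $\F_q$, in part (i) the representation $\otimes_F^r H^1(\ol E, \Q_\ell)(s)$ is two-dimensional, and the stated factorization $\chi(\otimes_F^r H^1(\ol E, \Q_\ell), t) = (1 - \alpha^r t)(1 - \ol\alpha^r t)$ gives Frobenius eigenvalues $\alpha^r q^{-s}$ and $\ol\alpha^r q^{-s}$. A fixed vector would force $\alpha^r = q^s$ or $\ol\alpha^r = q^s$; using $\alpha \ol\alpha = q$, the second rewrites as $\alpha^r = q^{r-s}$. Both possibilities are excluded by lemma \ref{lemFrobEigenvalue}(ii) applied with $\beta = 1$, of weight $0 < r$.

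In part (ii), by Deligne's purity theorem the Frobenius eigenvalues on $H^t(\ol X, \Q_\ell)$ are Weil $q$-integers of weight $t$, and their multi-set is stable under complex conjugation since the characteristic polynomial has rational coefficients. Every Frobenius eigenvalue on the tensor product therefore has the form $\alpha^r \beta q^{-s}$ or $\ol\alpha^r \beta q^{-s}$ for such a $\beta$. A fixed vector would yield $\alpha^r \beta = q^s$ in the first case, which is ruled out by lemma \ref{lemFrobEigenvalue}(ii) since $0 \le t < r$; in the second, complex conjugation transforms the equation into $\alpha^r \ol\beta = q^s$ with $\ol\beta$ again a Weil $q$-integer of weight $t$, and the same lemma applies. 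The main obstacle is the reduction step --- in particular, confirming that the $F$-action and the identification of the endomorphism algebra persist through specialization --- and this is precisely what lemma \ref{lemRedGalRepIso}, applied via the N\'eron model at a place of good ordinary reduction, supplies.
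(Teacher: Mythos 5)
Your proposal is correct and follows the same strategy as the paper's proof: reduce to the finite field case by specializing at a place of good ordinary reduction (using smooth proper base change and lemma \ref{lemRedGalRepIso}), then rule out $1$ as a Frobenius eigenvalue via lemma \ref{lemFrobEigenvalue}(ii) and Deligne's purity theorem. The paper's argument is considerably terser — essentially just naming the ingredients — while you have spelled out the spreading-out and specialization step, the dichotomy between the eigenvalue types $\alpha^r q^{-s}\beta$ and $\ol\alpha^r q^{-s}\beta$, and the complex-conjugation trick that handles the second case; these details are all consistent with what the paper leaves implicit.
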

	\begin{proof}
		Claim (i) is implied by the $t = 0$ case of claim (ii), so it suffices to prove (ii). By passing to a finite extension we can assume $E$ is constant over $k$, and thus that $k$ has a finite place $v$ where $E$ and $X$ both have good reduction. Using Lemma \ref{lemSimGoodRed}, we may further assume that $E$ has ordinary reduction at $v$. Then by examining the characteristic polynomial of a Frobenius element at $v$ (and applying smooth proper base change), the claim reduces to the case where $k$ is finite. The case where $r > t$ then follows from Lemma \ref{lemFrobEigenvalue} and the Weil conjectures. This implies the case $r > 2d - t$ by Poincar\'e duality on $X$.
	\end{proof}
	
	\begin{cor}\label{corAllCyclesHomTrivial}
		If $k$ is an arbitrary field, and $E$ is an ordinary elliptic curve over $k$ with CM by $F$, then for all $g \ge 1$ and $i \in \Z$, $\CH^i(\otimes_F^g \mf{h}^1(E))$ consists of homologically trivial cycles. If $X$ is a smooth projective variety over $k$, then for all $g \ge 2$ and $i \in \Z$, $\CH^i(\otimes_F^g \mf{h}^1(E) \otimes_\Q \mf{h}^1(X))$ consists of homologically trivial cycles.
	\end{cor}
    \begin{proof}
        This reduces to the case of a finitely generated field, where it follows from Lemma \ref{lemNoInvariants}.
    \end{proof}
	
	\begin{cor}\label{corTateForEg}
		If $k$ is a finitely generated field, $E$ a CM elliptic curve over $k$, $C$ a smooth projective curve over $k$, and $g \ge 0$, the full Tate conjecture holds for $E^g \times_k C$.
	\end{cor}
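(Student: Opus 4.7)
The plan is to use the motivic decomposition of $\mf{h}(E^g \times_k C)$ to reduce the Tate conjecture to a small number of easily handled summands. From the Chow--K\"unneth decompositions $\mf{h}(E) \cong \mathbf{1} \oplus \mf{h}^1(E) \oplus \mathbf{1}(-1)$ and $\mf{h}(C) \cong \mathbf{1} \oplus \mf{h}^1(C) \oplus \mathbf{1}(-1)$, the motive $\mf{h}(E^g \times_k C)$ decomposes as a direct sum of Tate twists of tensor products of the form $(\otimes_\Q^b \mf{h}^1(E)) \otimes_\Q \mf{h}^\epsilon(C)$ with $b \ge 0$ and $\epsilon \in \{0, 1, 2\}$. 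Applying corollary \ref{corQTensorPowerCMDecomp}, each $\otimes_\Q^b \mf{h}^1(E)$ is itself a sum of Tate twists of $\otimes_F^r \mf{h}^1(E)$ with $r \ge 0$. Since the Tate conjecture is stable under direct sums and Tate twists, it suffices to verify $T^i$ on each summand of the form $(\otimes_F^r \mf{h}^1(E))(-s)$ or $(\otimes_F^r \mf{h}^1(E))(-s) \otimes_\Q \mf{h}^1(C)$.

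For summands of the first type, if $r = 0$ the summand is a Tate motive and $T^i$ is classical; if $r \ge 1$, lemma \ref{lemNoInvariants}(i) gives that $H^\bullet((\otimes_F^r \mf{h}^1(E))(-s), \Q_\ell(i))^{G_k} = 0$, so the right-hand side of $T^i$ vanishes and the conjecture holds vacuously. For summands of the second type, if $r = 0$ the motive is $\mf{h}^1(C)(-s)$, whose cohomology lies in odd degree, so $H^{2i}(-, \Q_\ell(i))^{G_k} = 0$ for all $i$; if $r \ge 2$, lemma \ref{lemNoInvariants}(ii) applied with $t = 1 < r$ again kills the right-hand side. In each of these cases $T^i$ holds trivially. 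This already covers every summand appearing in $\mf{h}(E^g)$ (where only $\epsilon = 0$ and its Tate-twisted companion $\epsilon = 2$ contribute), so the Tate conjecture for $E^g$ alone falls out immediately.

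The main obstacle is the single remaining family of summands appearing in $\mf{h}(E^g \times_k C)$, namely $\mf{h}^1(E)(-s) \otimes_\Q \mf{h}^1(C)$ (the case $r = 1$, $\epsilon = 1$), where both sides of $T^i$ are genuinely nontrivial in codimension $i = s + 1$. Via the identification $\mf{h}^1(C) \cong \mf{h}^1(\Jac(C))$, this motive is a direct summand of $\mf{h}(E \times_k \Jac(C))$, and lemma \ref{lemMotiveCalcs}(ii) identifies the relevant group of algebraic cycles with $\Hom(E, \Jac(C)) \otimes_\Z \Q$. The Tate conjecture for this summand is thereby equivalent to the Tate isogeny statement
\[
    \Hom(E, \Jac(C)) \otimes_\Z \Q_\ell \xrightarrow{\sim} \Hom_{G_k}(H^1(\ol{C}, \Q_\ell), H^1(\ol{E}, \Q_\ell)),
\]
which is known for abelian varieties over finitely generated fields by the combined work of Tate (finite fields), Faltings (characteristic zero), and Zarhin (positive characteristic). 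Assembling these cases yields the full Tate conjecture for $E^g \times_k C$.
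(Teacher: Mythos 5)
Your argument for the case where $E$ is ordinary is correct and is essentially the proof the paper gives: the same decomposition of $\mf{h}(E^g \times_k C)$ via Corollary~\ref{corQTensorPowerCMDecomp}, the same use of Lemma~\ref{lemNoInvariants} to reduce to Tate motives and to $\mf{h}^1(E) \otimes_\Q \mf{h}^1(C)$, and the same appeal to Tate/Faltings/Zarhin for the latter.

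There is, however, a genuine gap: the statement covers \emph{all} CM elliptic curves, not just ordinary ones, and your argument does not apply when $E$ is supersingular. Lemma~\ref{lemNoInvariants} is stated only for \emph{ordinary} $E$, and it is genuinely false otherwise: for a supersingular curve over a finite field the Frobenius eigenvalue $\alpha$ on $H^1$ satisfies $\alpha^{2n} = \zeta q^n$ for some root of unity $\zeta$ and some $n$, so after passing to a finite extension $H^0(\otimes_F^{2n} \mf{h}^1(E), \Q_\ell(n))^{G_k}$ becomes nonzero, and the ``vacuous'' step of your argument breaks. Moreover, when $E$ is \emph{arithmetically} supersingular, $\End^0_k(E)$ is a quaternion algebra, and the decomposition of Corollary~\ref{corQTensorPowerCMDecomp} (which assumes $F$ is an imaginary quadratic field) is not even available. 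The paper closes this gap by treating the supersingular case separately: after a finite extension $\mf{h}(E^g)$ decomposes as a direct sum of Tate twists of $\mathbf{1}$ and $\mf{h}^1(E)$ alone (using the result from \cite[Section 2]{Moonen2024a} cited in the text), at which point the only nontrivial summand is again $\mf{h}^1(E) \otimes_\Q \mf{h}^1(C)$ and the Tate/Faltings/Zarhin input finishes the argument. You should add this case to complete the proof.
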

	\begin{proof}
		For now assume $E$ is ordinary and $C$ is geometrically integral. The motive of $C$ decomposes as
		\begin{gather*}
			\mf{h}(C) \cong \mathbf{1} \oplus \mf{h}^1(C) \oplus \mathbf{1}(-1).
		\end{gather*}
		When we decompose $\mf{h}(E^g)$ and apply Lemma \ref{lemNoInvariants}, we see that $\mf{h}(E^g \times_k C)$ is a direct sum of motives which either have no Tate classes in their cohomology, are Tate twists of the unit motive, or are Tate twists of $\mf{h}^1(E) \otimes_\Q \mf{h}^1(C)$. It thus suffices to prove the Tate conjecture for motives of the latter form, which follows from the Tate conjecture for divisors on $E \times_k \Jac(C)$ (\cite{Tate1966}, \cite{Faltings1986}, \cite{Zarhin1974a}, \cite{Zarhin1974b}; see also Mori's results in \cite[Chapitre VI.5]{MB1985}).
		\par
		Now suppose $E$ is supersingular. After passing to a finite extension, $\mf{h}(E^g)$ decomposes into a sum of Tate twists of $\mathbf{1}$ and $\mf{h}^1(E)$ by \cite[Section 2]{Moonen2024a}, and the same argument as above works.
		\par
		The extension to the case where $C$ is not geometrically integral is straightforward and omitted.
	\end{proof}
	
	\begin{rem}
		The full Tate conjecture for powers of CM elliptic curves was proved by Tate over number fields \cite[page 106]{Tate1965}, and since that time has been well-known over arbitrary finitely-generated fields. 
	\end{rem}
	
	\par
	Let $C$ be a smooth, projective, geometrically integral curve over $\F_q$, and $K$ its function field.
	\begin{cor}\label{corBBForEg}
		For $E$ an elliptic curve over $K$ which acquires CM over $\ol{K}$, and $g, i > 0$,
		$BB^i(E^g)$ holds. 
	\end{cor}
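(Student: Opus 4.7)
The plan is to apply proposition \ref{propBBAndTateAfterBaseChange}. We may assume $g \ge 1$. Spread out $E^g$ to a smooth projective abelian scheme $f \colon X \to U$ over a dense affine open $U \subseteq C$. Since $E$ acquires CM over $\ol{K}$, there is a finite extension $K_1/K$ over which $E$ has CM; applying lemma \ref{lemConstAfterBaseChange} to $E_{K_1}$ (with the strengthening noted in the remark after it) yields a CM elliptic curve $E'$ over a finite extension $L_0$ of $\F_q$, a finite extension $N$ of both $K_1$ and $L_0$, and an isomorphism $E_N \cong E'_N$ of elliptic curves over $N$. After enlarging $N$ to its Galois closure over $K$ we may assume $N/K$ is Galois, and we take $L_0$ to be the algebraic closure of $\F_q$ in $N$. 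Let $C'$ be the smooth projective geometrically integral curve over $L_0$ with $k(C') = N$; the inclusion $K \hookrightarrow N$ induces a dominant $\F_q$-morphism $C' \to C$.

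Next, choose a dense affine open $V \subseteq C'$ mapping into $U$ so that the induced $\F_q$-morphism $V \to U$ decomposes as $V \to U_{L_0} \to U$, where $U_{L_0} \to U$ is the finite \'etale Galois cover with group $\Gal(L_0/\F_q)$ obtained by base change, and $V \to U_{L_0}$ is, after possibly shrinking $V$ and replacing $N$ by a Galois closure over $L_0 K$, a finite \'etale Galois cover of $L_0$-curves. After further shrinking $V$, the generic isomorphism $E^g_N \cong E'^g_N$ of abelian varieties spreads to an isomorphism $X_V \cong E'^g \times_{L_0} V$ of smooth projective $L_0$-schemes (using the N\'eron model property). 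Set $Y = E'^g \times_{L_0} C'$ and $Z = E'^g \times_{L_0} (C' \setminus V)$; their motives are of abelian type. Corollary \ref{corTateForEg}, applied over $L_0$ for $Y$ and over each residue field of the closed points of $C' \setminus V$ for the components of $Z$, gives the full Tate conjecture for $Y$ and $Z$; lemma \ref{lemTateEquivs} upgrades this to $B^i(Y)$ and $B^{i-1}(Z)$, and in particular to $SS^{i-1}(Z)$. Lemma \ref{lemTateForOpen} then yields $B^i(E'^g \times_{L_0} V) = B^i(X_V)$ over $L_0$, and proposition \ref{propBBAndTateAfterBaseChange} produces $BB^i(E^g)$ and $T^i(E^g)$.

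The main technical step is arranging the Galois factorization of $V \to U$ together with the spreading-out of the generic isomorphism to an honest isomorphism over an open of $V$; both are standard once one separates the constant field extension $L_0/\F_q$ from the geometric extension $N/L_0 K$. The essential input from the theory of CM elliptic curves is corollary \ref{corTateForEg}, the full Tate conjecture for products of powers of CM elliptic curves and smooth projective curves.
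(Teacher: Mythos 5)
Your proof is correct and follows essentially the same route as the paper's, which is a one-line citation: ``there exists a finite extension $L/K$ such that $E_L$ is isogenous to a constant elliptic curve over $L$; thus corollary \ref{corBBForAbMots} yields the result.'' What you have done is unfold that citation, and in doing so you have actually made the argument cleaner. Strictly read, corollary \ref{corBBForAbMots} applies to a smooth projective variety $X$ over $\F_q$ and concludes $BB^i(X_K)$ for the \emph{constant} base change $X_K$; since $E^g$ over $K$ is not constant, and only becomes isomorphic to a constant abelian variety after passing to $N/K$, the cited corollary does not literally apply. Your version routes directly through proposition \ref{propBBAndTateAfterBaseChange}, which is exactly the descent step needed: spread out $E^g$ to $X \to U$, produce the cover $V \to U$ factoring into a constant-field extension followed by a finite Galois cover (plus a universal homeomorphism for any inseparable part, which you leave implicit but which is standard), spread out the isomorphism $E^g_N \cong E'^g_N$, and then verify $B^i(X_V)$ from corollary \ref{corTateForEg}, lemma \ref{lemTateEquivs}, and lemma \ref{lemTateForOpen}. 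These are precisely the ingredients used inside the proof of corollary \ref{corBBForAbMots}, so the mathematical content is identical to the paper's; you have just stated the descent more precisely. One small bookkeeping point you elide is that verifying $B^{i-1}$ and $SS^{i-1}$ for the fibers $Z = E'^g \times_{L_0}(C' \setminus V)$ \emph{as an $L_0$-scheme} requires relating it to the Tate conjecture over the various residue fields $\kappa(x)$ of the boundary points, which is handled by lemma \ref{lemBi(X)BaseChange}; worth a word, but not a gap.
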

	\begin{proof}
		There exists a finite extension $L / K$ such that $E_L$ is isogenous to a constant elliptic curve over $L$. Thus, Corollaries \ref{corBBForAbMotsBaseChange} and \ref{corTateForEg} yield the result.
	\end{proof}
    \begin{rem}
        Recall that via Remark \ref{remJannsenContrastDetailed}, we can also control all the other motivic cohomology groups of a curve $E$ as in Corollary \ref{corBBForEg}.
    \end{rem}
	We now revisit Moonen's question mentioned in the introduction, and show that conditional on some strong conjectures, it has a positive answer in positive characteristic.
	
	\begin{lem}\label{lemConcentrationOfCH}
		Let $E$ be an ordinary elliptic curve and $X$ a smooth projective variety over $\F_q$. If $g > 0$, $g \neq i$ and the $\ell$-adic cycle class map is injective on ${\CH^i(\mf{h}(X) \otimes_\Q \otimes_F^g \mf{h}^1(E))}$, then $\CH^i(\otimes_F^g\mf{h}^1(E_{k(X)})) = 0$, where $k(X)$ is the function field of $X$. 
	\end{lem}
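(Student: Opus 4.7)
The plan is to reduce to the range $0 < i < g$, where the target of the cycle class map on $\CH^i(\mf{h}(X) \otimes_\Q M)$ (with $M = \otimes_F^g \mf{h}^1(E)$) vanishes by a Frobenius-weight computation, and then to use the injectivity hypothesis together with a localization-sequence surjection from $X$ onto the generic fiber.

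First I would dispose of the cases $i \le 0$ and $i > g$. Writing $M$ as a summand of $\mf{h}(E^g)$ cut out by an idempotent $\pi$, we have $\CH^i(M_{k(X)}) \subseteq \CH^i(E^g_{k(X)})$, which vanishes for $i > g$ for dimensional reasons. For $i = 0$, $\CH^0(E^g_{k(X)}) = \Q$ injects into $H^0(E^g_{\ol{k(X)}}, \Q_\ell)$ via the cycle class map; since $H^\bullet(M, \Q_\ell)$ is concentrated in degree $g \ge 1$, the idempotent $\pi$ acts as zero on $H^0$, and hence also on $\CH^0$. Combined with the hypothesis $g \neq i$, this leaves only $0 < i < g$.

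In this range, the localization sequence for $X \times_{\F_q} E^g \to X$, together with the identification of the colimit of Chow groups over shrinking opens with the generic-fiber Chow group, yields a surjection $\CH^i(X \times_{\F_q} E^g) \twoheadrightarrow \CH^i(E^g_{k(X)})$. Applying $\pi$ produces a surjection $\CH^i(\mf{h}(X) \otimes_\Q M) \twoheadrightarrow \CH^i(M_{k(X)})$, so it suffices to show the source vanishes. Since $H^\bullet(M, \Q_\ell)$ sits in degree $g$ alone, the K\"unneth formula gives
\begin{gather*}
	H^{2i}(\mf{h}(X) \otimes_\Q M, \Q_\ell(i)) \cong H^{2i-g}(\ol{X}, \Q_\ell) \otimes_{\Q_\ell} H^g(M, \Q_\ell)(i).
\end{gather*}
When $2i - g < 0$ this group is zero; otherwise $0 \le 2i - g < g$, so lemma \ref{lemNoInvariants}(ii) with $r = g$, $t = 2i - g$, $s = i$ forces the $G_{\F_q}$-invariants to vanish. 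The injectivity hypothesis then gives $\CH^i(\mf{h}(X) \otimes_\Q M) = 0$, and the conclusion follows from the displayed surjection.

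The argument is essentially an accounting one: the real work is already in lemma \ref{lemNoInvariants}(ii), and the only subtlety is recognizing that the cases $i \notin (0, g)$ must be handled separately --- by dimension for $i > g$, and by the observation that $\pi$ kills $H^0$ for $i = 0$.
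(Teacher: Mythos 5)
Your proof is correct and follows essentially the same route as the paper: localization gives a surjection $\CH^i(\mf{h}(X) \otimes_\Q M) \twoheadrightarrow \CH^i(M_{k(X)})$, and the source vanishes because the injectivity hypothesis combined with lemma \ref{lemNoInvariants}(ii) (or, when $2i-g<0$, vanishing of cohomology in negative degrees) kills the target of the cycle class map. Your explicit K\"unneth bookkeeping makes visible what the paper leaves implicit, which is a nice touch.

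The one place you diverge is the separate treatment of $i = 0$. It is correct (injectivity of $\cl$ on $\CH^0$, plus $\pi$ killing $H^0$, does force $\pi \cdot \CH^0 = 0$), but it is not needed: the main argument for $i < g$ already covers $i \le 0$, since $H^{2i}(\mf{h}(X) \otimes_\Q M, \Q_\ell(i)) \cong H^{2i-g}(\ol{X},\Q_\ell)\otimes H^g(M,\Q_\ell)(i)$ vanishes outright when $2i < g$, and the injectivity hypothesis then gives $\CH^i(\mf{h}(X)\otimes_\Q M)=0$ directly. The paper states the case split simply as $i>g$ trivial and $i<g$ handled uniformly; your finer split is harmless but redundant.
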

	\begin{proof}
		For $i > g$ the claim is trivial, as $\otimes_F^g \mf{h}^1(E_{k(X)})$ is a summand of $\mf{h}(E^g_{k(X)})$. For $i < g$, $H^{2i}(\mf{h}(X) \otimes_\Q \otimes_F^g \mf{h}^1(E), \Q_\ell(i))$ has no Galois invariants by Lemma \ref{lemNoInvariants}. By the injectivity hypothesis, we find
		\begin{gather*}
			\CH^i(\mf{h}(X) \otimes_\Q \otimes_F^g \mf{h}^1(E)) = 0.
		\end{gather*}
		But this group surjects onto $\CH^i(\otimes_F^g \mf{h}^1(E_{k(X)}))$, as the localization exact sequence \eqref{eqLocalization} is compatible with the relevant correspondences.
	\end{proof}
	
	\begin{thm}\label{thmChVanishOverFuncField}
		For an ordinary CM elliptic curve $E$ over a global function field $K$, $g \ge 2$, and $i \in \Z$,
		\begin{gather*}
			\CH^i(\otimes_F^g \mf{h}^1(E)) = 0.
		\end{gather*}
	\end{thm}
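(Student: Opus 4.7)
The plan is to mimic the proof of Lemma \ref{lemConcentrationOfCH}, with Corollary \ref{corTateForEg} and Lemma \ref{lemTateEquivs} supplying the bijectivity of the cycle class map that was taken as an input there. First I would reduce to the case of a constant elliptic curve: by Lemma \ref{lemConstAfterBaseChange}, after replacing $K$ by some finite extension $K'$, $E_{K'}$ becomes isogenous to $(E_0)_{K'}$ for some ordinary CM elliptic curve $E_0$ defined over a finite subfield of $K'$. Since pullback on Chow groups of motives is injective under algebraic base change (Lemma \ref{lemCHInjectsUnderBaseExt}(ii)), and an isogeny between two elliptic curves with CM by the same imaginary quadratic field $F$ can be chosen $F$-equivariantly, this identifies $\otimes_F^g \mf{h}^1(E_{K'})$ with $\otimes_F^g \mf{h}^1((E_0)_{K'})$. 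Renaming, I may assume that $K$ is the function field of a smooth projective geometrically integral curve $C$ over $\F_q$ and that $E = (E_0)_K$ for some ordinary CM elliptic curve $E_0$ over $\F_q$.

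Set $N = \mf{h}(C) \otimes_\Q \otimes_F^g \mf{h}^1(E_0) \in \mc{M}(\F_q)$, a direct summand of $\mf{h}(C \times_{\F_q} E_0^g)$ and hence of abelian type. Corollary \ref{corTateForEg} gives $T^i(C \times_{\F_q} E_0^g)$ for every $i$, and passing to summands yields $T^i(N)$; Lemma \ref{lemTateEquivs} then upgrades this to $B^i(N)$, so the cycle class map
\begin{gather*}
	\CH^i(N) \otimes_\Q \Q_\ell \xrightarrow{\sim} H^{2i}(N, \Q_\ell(i))^{G_{\F_q}}
\end{gather*}
is a bijection. Using the Chow--K\"unneth decomposition $\mf{h}(C) \cong \mathbf{1} \oplus \mf{h}^1(C) \oplus \mathbf{1}(-1)$, I split $N$ into three summands. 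Lemma \ref{lemNoInvariants}(i), applied with $r = g \ge 1$, kills the Galois invariants in the cohomology of the two outer summands (Tate twists of $\otimes_F^g \mf{h}^1(E_0)$); Lemma \ref{lemNoInvariants}(ii), applied with $r = g \ge 2$ and $t \le \dim C = 1$, kills the Galois invariants in the middle summand $\mf{h}^1(C) \otimes_\Q \otimes_F^g \mf{h}^1(E_0)$. Hence $H^{2i}(N, \Q_\ell(i))^{G_{\F_q}} = 0$, so $\CH^i(N) = 0$.

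To conclude, as in the closing step of the proof of Lemma \ref{lemConcentrationOfCH}, the localization sequence \eqref{eqLocalization} is compatible with correspondences and therefore gives a surjection $\CH^i(N) \twoheadrightarrow \CH^i(\otimes_F^g \mf{h}^1((E_0)_K)) = \CH^i(\otimes_F^g \mf{h}^1(E))$, whence the asserted vanishing. The most delicate step will be the opening reduction, since one has to track the $F$-action carefully through both the finite base change and the isogeny; the cohomological vanishing and the surjectivity of the generic-fiber map are essentially formal once the appropriate Tate-type input is available.
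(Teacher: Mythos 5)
Your proposal is correct and matches the paper's (largely implicit) argument: the paper's own justification is the single sentence "Essentially the same argument [as Lemma \ref{lemConcentrationOfCH}], combined with lemma \ref{lemTateEquivs} and corollary \ref{corTateForEg}," and you have filled this in exactly as intended. Specifically, you carry out the reduction to a constant ordinary CM elliptic curve $E_0$ over $\F_q$ via Lemmas \ref{lemConstAfterBaseChange} and \ref{lemCHInjectsUnderBaseExt}(ii), obtain $B^i(N)$ for $N = \mf{h}(C) \otimes_\Q \otimes_F^g\mf{h}^1(E_0)$ from Corollary \ref{corTateForEg} and Lemma \ref{lemTateEquivs}, kill the invariants $H^{2i}(N,\Q_\ell(i))^{G_{\F_q}}$ by Lemma \ref{lemNoInvariants} (where the hypothesis $g\ge 2$ is needed precisely to handle the middle Künneth summand $\mf{h}^1(C)\otimes_\Q\otimes_F^g\mf{h}^1(E_0)$, since $\dim C = 1$ forces $t\le 1 < g$), and conclude via the correspondence-compatible localization surjection $\CH^i(N)\twoheadrightarrow\CH^i(\otimes_F^g\mf{h}^1(E))$. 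On the one point you flag as delicate — tracking the $F$-action through the isogeny — note that even if the chosen isogeny induces complex conjugation on $F$, the resulting identification $\otimes_F^g\mf{h}^1(E_{K'}) \cong \overline{\otimes_F^g\mf{h}^1((E_0)_{K'})}$ suffices, since a motive and its conjugate have the same underlying $\Q$-motive and hence the same Chow groups.
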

    \begin{proof}
        It suffices to assume that $E$ is defined over $\F_q$. Let $C$ be the smooth projective curve with function field $K$. Using Lemma \ref{lemTateEquivs} and Corollary \ref{corTateForEg} to check the needed injectivity of the cycle class map on $\CH^\bullet(\mf{h}(C) \otimes_\Q \otimes_F^g \mf{h}^1(E))$, the claim follows from essentially the same argument as in Lemma \ref{lemConcentrationOfCH}.
    \end{proof}
	
	\begin{cor}\label{corGeneralConcentrationOfCH}
		Suppose that for all smooth projective varieties $X$ over $\F_p$, the $\ell$-adic cycle class map is injective on $\CH^i(X)$. Then for any field $k$ of positive characteristic $p$, and any ordinary elliptic curve $E$ over $k$ with CM by $F$, $\CH^i(\otimes_F^g \mf{h}^1(E)) = 0$ for $i \neq g$.
	\end{cor}
	\begin{proof}
		It suffices to assume that $k$ is finitely generated. We can then without loss of generality pass to a finite extension of $k$ and assume that $E$ is defined over a finite subfield of $k$. Finally, by applying an alteration \cite[Theorem 4.1]{dJ1996}, we reduce to the case where $k$ is the function field of a smooth projective variety over $\F_p$. Lemma \ref{lemConcentrationOfCH} now implies the claim.
	\end{proof}
	
	\begin{cor}\label{corAlgEqualsNumForEg}
		Let $E$ be a CM elliptic curve over a field $k$ of positive characteristic $p$. Under the same hypothesis as in Corollary \ref{corGeneralConcentrationOfCH}, for all $g > 0$, $\CH^\bullet_{\alg}(E^g) = \CH^\bullet_{\num}(E^g)$.
	\end{cor}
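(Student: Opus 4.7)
My plan is to verify the identity summand-by-summand in the motivic decomposition of $\mf{h}(E^g)$.

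First, pullback along a finite Galois cover is injective on both $\CH^\bullet_{\alg}$ and $\CH^\bullet_{\num}$ (lemma \ref{lemCHInjectsUnderBaseExt}(iii)), and identifies each with Galois invariants of the corresponding group upstairs. Descending the desired equality under Galois invariants, I may assume $E$ has all its complex multiplications defined over $k$. If $E$ becomes arithmetically supersingular after this reduction, the decomposition of $\mf{h}(E^g)$ reviewed in \cite[Section 2]{Moonen2024a} expresses the motive as a sum of Tate twists of $\mathbf{1}$ and $\mf{h}^1(E)$, on each of which $\CH^\bullet_{\alg} = \CH^\bullet_{\num}$ holds trivially (using lemma \ref{lemAdEqRelsOnDivisors} together with the fact that $\CH^1(\mf{h}^1(E)) = \Pic^0(E) \otimes_\Z \Q$ consists of algebraically trivial divisors). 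So I may assume $E$ is ordinary, with $F = \End_k^0(E)$ imaginary quadratic.

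Combining the Chow--K\"unneth decomposition of $\mf{h}(E^g) = \mf{h}(E)^{\otimes g}$ with corollary \ref{corQTensorPowerCMDecomp}, $\mf{h}(E^g)$ decomposes as a direct sum of summands of the form $M = (\otimes_F^n \mf{h}^1(E))(-J)$ for integers $n, J \geq 0$. Because the cutting projectors are algebraic cycles, they preserve both algebraic and numerical equivalence, giving matching direct-sum decompositions of $\CH^\bullet_{\alg}(E^g)$ and $\CH^\bullet_{\num}(E^g)$, and it is enough to verify the identity on each summand $M$. For $n = 0$, the summand is a pure Tate twist and the result is immediate since $\CH^\bullet(\mathbf{1}(-J))$ is one-dimensional and numerically non-trivial. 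For $n \geq 1$, corollary \ref{corGeneralConcentrationOfCH} (which is where the conjectures enter) concentrates $\CH^i(M)$ in codimension $i = n + J$, and corollary \ref{thmAllCyclesHomTrivial} forces the remaining group $\CH^{n+J}(M) = \CH^n(\otimes_F^n \mf{h}^1(E))$ to consist of homologically, hence numerically, trivial cycles. Thus $\CH^\bullet_{\num}(M) = 0$, and the task is reduced to showing that every cycle in $\CH^n(\otimes_F^n \mf{h}^1(E))$, viewed inside $\CH^{n+J}(E^g)$, is algebraically trivial.

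To close this, I would invoke Beauville's decomposition $\CH^\bullet(E^g) = \bigoplus_s \CH^\bullet(E^g)^{(s)}$, characterized by $[m]^*$ acting on $\CH^{n+J}(E^g)^{(s)}$ by $m^{2(n+J) - s}$. Since $M$ lives in $\mf{h}^{n+2J}(E^g)$, the operator $[m]^*$ acts on $\CH^{n+J}(M)$ by $m^{n+2J}$, so these cycles sit in the eigenspace $\CH^{n+J}(E^g)^{(n)}$ with $s = n \geq 1$. In the critical case $n + J = g$ ($0$-cycles), the inclusion $\CH^g(E^g)^{(s)} \subseteq \CH^g_{\alg^0}(E^g)$ for $s \geq 1$ is classical, since every degree-$0$ zero-cycle on an abelian variety is algebraically trivial (Bloch). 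The main obstacle is extending this to the intermediate codimensions $n + J < g$; my plan is to transport the question back to the $0$-cycle case via the Fourier--Mukai transform on $\CH^\bullet(E^g)$, which is built from pullback, intersection with the Poincar\'e bundle and pushforward and therefore preserves algebraic equivalence, while interchanging Beauville's eigenspaces in a predictable way. The hard part will be making this reduction run cleanly for every pair $(n, J)$, since Beauville's inclusion at intermediate codimensions is not formal; the additional $F$-module structure on $\otimes_F^n \mf{h}^1(E)$ should however supply enough extra symmetry to carry through the argument.
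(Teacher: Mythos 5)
There is a genuine gap, and it is the last step. You have correctly decomposed $\mf{h}(E^g)$ into summands $M = (\otimes_F^n \mf{h}^1(E))(-J)$, used corollary \ref{corGeneralConcentrationOfCH} to concentrate $\CH^\bullet(M)$ in a single codimension, and used corollary \ref{thmAllCyclesHomTrivial} to conclude numerical triviality. But you then view the remaining group $\CH^{n+J}(M) \cong \CH^n(\otimes_F^n \mf{h}^1(E))$ as sitting inside $\CH^{n+J}(E^g)$, which for $J > 0$ consists of cycles of intermediate codimension, and you are led into the Beauville/Fourier--Mukai machinery to try to reduce to the $0$-cycle case. That route is both unnecessary and unlikely to close: the Fourier transform carries $\CH^i(A)^{(s)}$ into $\CH^{g-i+s}(\hat A)^{(s)}$, so it only lands in codimension $g$ when $s = i$, i.e.\ when $J = 0$ --- precisely the case you already know how to handle.

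The missing idea is that $\CH^n(\otimes_F^n \mf{h}^1(E)) = \Hom_{\mc{M}(k)}(\mathbf{1}(-n), \otimes_F^n \mf{h}^1(E))$ is intrinsic to the motive, and the motive $\otimes_F^n \mf{h}^1(E)$ is \emph{also} a summand of $\mf{h}^n(E^n)$. The two realizations of $\CH^n(\otimes_F^n \mf{h}^1(E))$ --- inside $\CH^{n+J}(E^g)$ and inside $\CH^n(E^n)$ --- are interchanged by a correspondence, and correspondences respect algebraic equivalence, so you may work inside $\CH^n(E^n)$. There the cycles in question are homologically trivial $0$-cycles on the abelian variety $E^n$, and for $0$-cycles on an abelian variety over $\ol{k}$, numerical (indeed homological, indeed degree-zero) implies algebraic --- this is the result the paper cites. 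The paper packages this observation as an induction on $g$: the inductive hypothesis disposes of all summands with $n < g$, leaving only $\otimes_F^g\mf{h}^1(E)$, where concentration already puts you in codimension $g$. Either phrasing makes the Fourier--Mukai detour superfluous.

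Two smaller remarks. First, your reduction to ``$E$ has CM defined over $k$'' via a finite Galois cover is not enough; you should base-change all the way to $\ol{k}$ (using injectivity of $\CH^\bullet_\sim(E^g)\to\CH^\bullet_\sim(E^g_{\ol{k}})$ for $\sim \in \{\alg,\num\}$, which is lemma \ref{lemCHInjectsUnderBaseExt}(ii)), since the $0$-cycle fact you are about to use requires an algebraically closed base. Second, for the supersingular case the paper simply cites \cite[Corollary 2.12(iii)]{LF2021}; your sketch via the decomposition should also work but re-proves that result.
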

	\begin{proof}
		It suffices to assume that $k$ is algebraically closed. The case where $E$ is supersingular does not depend on any conjectures, and is \cite[corollary 2.12(iii)]{LF2021}.
		\par
		Suppose $E$ is ordinary, with endomorphism algebra $F$. By induction on $g$ it suffices to show $\CH^\bullet_{\alg}(\otimes_F^g \mf{h}^1(E)) = \CH^\bullet_{\num}(\otimes_F^g \mf{h}^1(E))$. But by Corollary \ref{corGeneralConcentrationOfCH}, $\CH^\bullet(\otimes_F^g \mf{h}^1(E)) \subseteq \CH^g(E^g)$, and algebraic and numerical equivalence agree for zero-cycles on $E^g$ \cite[Proposition 10.2]{Moonen2024b}.
	\end{proof}
	
	\appendix
	\section{The transcendental motives of $E^2$ and $E^3$}
	The transcendental motive of a surface or a nice\footnote{See \cite[Hypothesis 7.1]{Kahn2021} for the meaning of \enquote{nice}.} threefold $X$ is a summand of $\mf{h}(X)$ whose isomorphism class is canonically defined, and which in some sense contains the most subtle part of the Chow groups and cohomology of $X$. The results we have developed thus far allow us to compute the transcendental motives of $E^2$ and $E^3$, where $E$ is an ordinary CM elliptic curve over an arbitrary field $k$. 
	\par
	We will begin by reviewing the definitions and properties of transcendental motives. We caution that our references for transcendental motives, \cite{Kahn_Murre_Pedrini_2007} and \cite{Kahn2021}, use different conventions from ours. Their results are phrased in terms of covariant motives, meaning that they work in the opposite category of our $\mc{M}_\sim(k)_L$. They also use the opposite sign convention for Tate twists. We will translate to our preferred conventions when stating their results.
	
	\subsection{Transcendental motives of surfaces}
	Let $S$ be a smooth projective surface over $k$, and let $T(S)$ denote the Albanese kernel of $S$, i.e.\ the kernel of the natural map from the group $\CH^2(S)_{0, \Z}$ of zero-cycles of degree zero on $S$ to the $k$-points of the Albanese variety $\Alb(S)$.
	\begin{thm}[{{\cite[Proposition 14.2.3]{Kahn_Murre_Pedrini_2007}}}]\label{thmTransMotOfSurface}
		Let $\mf{h}(S) \cong \bigoplus\limits_{i=0}^4 \mf{h}^i(S)$ be a Chow--K\"unneth decomposition of the form constructed in \cite[Section 4]{Scholl1994}. Let 
		\begin{equation*}
			\ul{\NS}_S = \NS(S_{k_s}) \otimes \Q
		\end{equation*}
		be the rational, geometric N\'eron-Severi group of $S$, viewed as a $G_k$-module. There exists a unique decomposition 
		\begin{gather*}
			\mf{h}^2(S) \cong \mf{h}^2_{\alg}(S) \oplus t^2(S)
		\end{gather*}
		such that $\mf{h}^2_{\alg}(S) \cong \mf{h}(\ul{\NS}_S)(-1)$. Here $\mf{h}(\ul{\NS}_S)$ is the Artin motive associated to $\ul{\NS}_S$. Moreover, the Chow groups of $t^2(S)$ satisfy $\CH^0(t^2(S)) = \CH^1(t^2(S)) = 0$, $\CH^2(t^2(S)) = T(S) \otimes_\Z \Q$.
	\end{thm}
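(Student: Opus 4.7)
The plan is to produce the decomposition by constructing an explicit projector cutting out the ``algebraic'' summand of $\mf{h}^2(S)$, then read off the Chow groups of the complementary summand from the construction. Concretely, after passing to $k_s$, lemma \ref{lemAdEqRelsOnDivisors} implies that the intersection pairing on $\NS(S_{k_s})_\Q$ is non-degenerate; choose a basis $D_1, \ldots, D_r \in \CH^1(S_{k_s})_\Q$ and the dual basis $D_1^\vee, \ldots, D_r^\vee$ with $D_i \cdot D_j^\vee = \delta_{ij}$, and form
\[
\pi_{\alg} = \sum_{i=1}^r [D_i \times_{k_s} D_i^\vee] \in \CH^2(S_{k_s} \times_{k_s} S_{k_s})_\Q.
\]
A direct composition check using the duality relation shows $\pi_{\alg}$ is idempotent and independent of the basis. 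Because $\pi_{\alg}$ depends only on the $G_k$-equivariant datum of the pair $(\NS(S_{k_s})_\Q, \text{intersection})$, it is $G_k$-invariant, and by lemma \ref{lemCHInjectsUnderBaseExt}(ii) it descends uniquely to an idempotent $\pi_{\alg} \in \CH^2(S \times_k S)_\Q$. Since each summand $D_i \times D_i^\vee$ is a pure $(1,1)$-correspondence cohomologically, $\pi_{\alg}$ commutes with the Scholl projector $\pi_2$ and satisfies $\pi_2 \circ \pi_{\alg} = \pi_{\alg} = \pi_{\alg} \circ \pi_2$, so $\pi_{\alg}$ cuts out a summand $\mf{h}^2_{\alg}(S)$ of $\mf{h}^2(S)$.

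Next I would identify $\mf{h}^2_{\alg}(S) \cong \mf{h}(\ul{\NS}_S)(-1)$. Realize $\ul{\NS}_S$ as $\mf{h}(Z)$ for a zero-dimensional $k$-scheme $Z$ together with a tautological family of divisor classes on $S \times_k Z$; the divisors $D_i$ assemble into a correspondence $\alpha: \mf{h}(Z)(-1) \to \mf{h}^2(S)$, while the dual family $D_i^\vee$ gives $\beta: \mf{h}^2(S) \to \mf{h}(Z)(-1)$, and the defining property $D_i \cdot D_j^\vee = \delta_{ij}$ yields $\beta \circ \alpha = \id$ and $\alpha \circ \beta = \pi_{\alg}$. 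Setting $t^2(S) := (S, \pi_2 - \pi_{\alg})$ then gives the desired decomposition. For the Chow groups, $\CH^0(\mf{h}^2(S))=0$ by the Scholl construction, giving $\CH^0(t^2(S))=0$; the identity $\CH^1(\mf{h}^2(S)) = \NS(S)_\Q$ holds because $\mf{h}^1(S) \cong \mf{h}^1(\Pic^0(S)_\red)$ absorbs $\CH^1_\alg$ by lemma \ref{lemMotiveCalcs}(i), and $\pi_{\alg}$ acts as the identity on this quotient by construction, so $\CH^1(t^2(S))=0$; finally, the image of $\pi_{\alg}$ on $\CH^2(S)_\Q$ factors through a motive of the form $\mf{h}(Z)(-1)$ (hence through a direct sum of $\CH^0$'s of \'etale $k$-algebras), which is precisely the image of the rational Albanese map, so $\CH^2(t^2(S)) = T(S) \otimes_\Z \Q$.

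The main obstacle is really just the bookkeeping: verifying that $\pi_{\alg}$ descends cleanly to $k$ with the correct idempotency and orthogonality properties, and matching the complementary summand of the Albanese map with $\ker(\alpha \circ \beta)$ on zero-cycles (rather than on $H^4$). For uniqueness, given any second decomposition $\mf{h}^2(S) \cong \mf{h}(\ul{\NS}_S)(-1) \oplus t'$, the two copies of $\mf{h}(\ul{\NS}_S)(-1)$ agree because any morphism between $t^2(S)$ and an Artin-Tate motive of the form $\mf{h}(\ul{\NS}_S)(-1)$ vanishes: such a morphism is an element of $\CH^1(t^2(S) \otimes \mf{h}(\ul{\NS}_S)^\vee)$, which decomposes as a direct sum of pieces of $\CH^1(t^2(S)_{k'})$ for finite separable extensions $k'/k$, all of which vanish by the calculation above applied over $k'$. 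I would follow the covariant treatment of \cite[Proposition 14.2.3]{Kahn_Murre_Pedrini_2007}, translated into our contravariant and sign conventions.
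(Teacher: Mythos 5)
The paper does not prove this theorem; it imports it verbatim from Kahn--Murre--Pedrini (the citation sits in the theorem header), so there is no in-paper argument to compare against. Your sketch follows the KMP strategy in outline, but two steps are genuinely incomplete. First, the claim that $\pi_{\alg}$ commutes with the Scholl projector $\pi_2$ (and is orthogonal to $\pi_0, \pi_1, \pi_3, \pi_4$), and hence cuts out a summand of $\mf{h}^2(S)$, is justified only cohomologically: each $[D_i \times D_i^\vee]$ being a K\"unneth $(1,1)$-component gives $\pi_2 \circ \pi_{\alg} = \pi_{\alg} = \pi_{\alg} \circ \pi_2$ modulo \emph{homological} equivalence, but the theorem requires these identities in $\CH^2(S \times_k S)_\Q$. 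Passing from the cohomological to the Chow-level relations (by sandwiching $\pi_2 \circ \pi_{\alg} \circ \pi_2$, correcting iteratively, and using nilpotence of homologically trivial self-correspondences, or an equivalent device) is the actual content of KMP's Proposition 14.2.3; you label it \enquote{bookkeeping,} but it is the core of the proof, not an afterthought. (A smaller slip: the descent from $k_s$ to $k$ of a Galois-invariant class is lemma \ref{lemCHInjectsUnderBaseExt}(iii), not (ii).)

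Second, the final Chow-group computation is incorrect. You assert that the image of $\pi_{\alg}$ on $\CH^2(S)_\Q$ factors through $\CH^0$'s of \'etale $k$-algebras and is \enquote{precisely the image of the rational Albanese map.} In the paper's conventions $\CH^2\bigl(\mf{h}(\ul{\NS}_S)(-1)\bigr) \cong \CH^1\bigl(\mf{h}(\ul{\NS}_S)\bigr) = 0$, since an Artin motive has no $\CH^1$; thus $\pi_{\alg}$ annihilates $\CH^2(S)_\Q$ rather than picking out the Albanese image. The Albanese quotient of $\CH^2(S)_0$ is controlled by Scholl's $\pi_3$ (equivalently by $\mf{h}^3(S) \cong \mf{h}^1(\Alb S)(-1)$), not by any summand of $\mf{h}^2(S)$. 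The correct chain is: $\CH^2(\mf{h}^2(S)) = T(S) \otimes_\Z \Q$ is Murre's theorem on the Scholl decomposition of a surface (verifying his conjectures (I)--(II) in this case), and $\CH^2(\mf{h}^2_{\alg}(S)) = 0$ by the calculation above, whence $\CH^2(t^2(S)) = T(S) \otimes_\Z \Q$. As written, your argument neither establishes that $\pi_2 \CH^2(S)_\Q$ is the Albanese kernel nor correctly identifies the image of $\pi_{\alg}$.
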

	
	The motive $t^2(S)$ is called the transcendental motive of $S$. When $S$ is an abelian surface, the canonical Chow--K\"unneth decomposition satisfies the hypothesis of Theorem \ref{thmTransMotOfSurface} (by \cite[Theorem 5.3]{Scholl1994}).
	
	\subsection{The coniveau decomposition}
	Jannsen in \cite[Theorem 1]{Jannsen1992} showed that the category of numerical motives $\mc{M}_{\num}(k)_L$ with coefficients in a characteristic zero field $L$ is a semisimple abelian category. Call a motive \textbf{effective} if it is isomorphic to a summand of the motive of a variety, and let $\mc{M}_{\num}^{\eff}(k)_L$ denote the full subcategory of $\mc{M}_{\num}(k)_L$ on the effective motives.
	
	\begin{defn}[{{\cite[Definition 4.1]{Kahn2021}}}]
		A simple motive $S \in \mc{M}_{\num}^{\eff}(k)_L$ is called \textbf{primitive} if $S(1)$ is not effective. 
	\end{defn}
	
	\begin{defn}[{{\cite[Lemma 4.2]{Kahn2021}}}]
		The \textbf{coniveau} of a simple motive $S \in \mc{M}_{\num}^{\eff}(k)_L$ is the unique $n \ge 0$ such that $S(n)$ is primitive.
	\end{defn}
	
	A direct sum of simple motives of coniveau $n$ is also defined to have coniveau $n$. 
	By semisimplicity, every $M \in \mc{M}_{\num}^{\eff}(k)_L$ has a unique decomposition
	\begin{gather*}
		M \cong \bigoplus_{j \ge 0} M_j(-j),
	\end{gather*}
	with $M_j$ primitive. 
	\par
	The next lemma is a special case of \cite[Lemma 7.5 and Theorem 7.7]{Kahn2021}. 
	\begin{lem}
		\label{lemConiveauLiftAbelian3Fold}
		
		For an abelian threefold $A$ over $k$, the coniveau decomposition of $\mf{h}^3(A)_{\num}$ lifts to rational equivalence, uniquely up to non-canonical isomorphism. The lift takes the form
		\begin{gather*}
			\mf{h}^3(A) \cong t^3(A) \oplus \mf{h}^1(J)(-1)
		\end{gather*}
		for a certain abelian variety $J$, such that $t^3(A)_{\num}$ has coniveau zero, and
		\begin{gather*}
			\CH^2_{\alg}(t^3(A)) \cong \Griff(A),
		\end{gather*}
        where $\Griff(A)$ is the numerical Griffiths group.
	\end{lem}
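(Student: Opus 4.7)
The plan is to build the decomposition modulo numerical equivalence via coniveau, lift to rational equivalence by Kimura finite-dimensionality, and then identify $\CH^2_{\alg}(t^3(A))$ with $\Griff(A)$. First, since $\mc{M}_{\num}(k)_\Q$ is semisimple, I would decompose $\mf{h}^3(A)_{\num}$ into simple motives; each summand $S$ has weight $3$, and writing $S \cong S'(-n)$ with $S'$ primitive of weight $3-2n$, effectivity of $S'$ forces $n \in \{0,1\}$. Let $t^3(A)_{\num}$ be the sum of coniveau-zero summands, and write the coniveau-one part as $N(-1)$ with $N$ of pure weight $1$. Since $N$ is a weight-$1$ summand of a motive of abelian type, lemma \ref{lemMotiveCalcs}(ii) (applied modulo numerical equivalence) combined with Poincar\'e reducibility identifies $N$ with $\mf{h}^1(J)_{\num}$ for an abelian variety $J$, unique up to isogeny.

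Next I would lift to rational equivalence. The motive $\mf{h}^3(A)$ is Kimura finite-dimensional as a summand of $\mf{h}(A)$, and for motives of abelian type homological and numerical equivalence agree by Lieberman's theorem. Corollary \ref{corDecompsOfFinDimMotivesLift} then produces a lift $\mf{h}^3(A) \cong t^3(A) \oplus \mf{h}^1(J)(-1)$ in $\mc{M}(k)_\Q$, non-canonically unique because any two lifts of a fixed numerical decomposition become isomorphic after reducing modulo numerical equivalence.

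For the Chow group identification, lemma \ref{lemMotiveCalcs} gives $\CH^2(\mf{h}^1(J)(-1)) = \CH^1(\mf{h}^1(J)) \cong \Pic^0(J)(k) \otimes_\Z \Q$, and its image in $\CH^2(A)$ consists of algebraically trivial cycles, since a correspondence from $J$ to $A$ carries the algebraic family $\Pic^0(J)$ of divisors to an algebraic family of $1$-cycles. Hence modulo algebraic equivalence the coniveau-one summand vanishes, giving $\CH^2_{\alg}(t^3(A)) \cong \CH^2_{\alg}(\mf{h}^3(A))$. Since $H^4(\mf{h}^3(A), \Q_\ell) = 0$, every class in $\CH^2(\mf{h}^3(A))$ is numerically trivial in $\CH^2(A)$, so its image in $\CH^2_{\alg}(A)$ lies in $\Griff(A)$. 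The reverse inclusion comes from decomposing a representative of a Griffiths class via Beauville's eigenspace decomposition of $\CH^2(A)$ and showing that the only component which can contribute is the one coming from $\mf{h}^3(A)$.

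The main obstacle is this final matching: confirming that the Beauville components $\CH^2_{\alg}(\mf{h}^j(A))$ for $j \neq 3$ do not contribute to $\Griff(A)$, which requires input specific to $1$-cycles on abelian threefolds and is where the dimension hypothesis is essential.
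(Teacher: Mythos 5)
The paper does not prove this lemma: it is quoted directly from \cite[Lemma 7.5 and Theorem 7.7]{Kahn2021}, so there is no in-paper argument to compare against. Your outline reconstructs Kahn's proof in broad strokes, but two steps as you have written them are problematic.

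First, to pass from the numerical decomposition to a decomposition modulo rational equivalence you invoke Corollary \ref{corDecompsOfFinDimMotivesLift}, which the paper states for \emph{homological} equivalence, and you bridge the gap by asserting that homological and numerical equivalence agree on abelian motives by Lieberman's theorem. That is a misattribution: Lieberman established the Lefschetz standard conjecture $B$ for abelian varieties, and $B$ implies $\mathrm{hom} = \mathrm{num}$ only in characteristic zero (via the Hodge--Riemann relations). In positive characteristic --- the main setting of the paper --- this implication is open. The fix is to bypass homological equivalence entirely: Kimura's nilpotence theorem, stated earlier in the paper, says that numerically trivial endomorphisms of finite-dimensional motives are nilpotent, which is exactly what is needed to lift idempotents directly from $\mc{M}_{\num}(k)$ to $\mc{M}(k)$.

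Second, and more seriously, you explicitly leave open the identification $\CH^2_{\alg}(t^3(A)) \cong \Griff(A)$. Your sketch correctly observes that $\CH^2(\mf{h}^3(A))$ consists of numerically trivial classes (since $H^4(\mf{h}^3(A), \Q_\ell) = 0$) and that $\CH^2(\mf{h}^1(J)(-1))$ consists of algebraically trivial cycles, but it leaves unverified that the remaining Beauville components $\CH^2_{\alg}(\mf{h}^j(A))$ for $j \neq 3$ contribute nothing to the Griffiths group. That verification is precisely the content of Kahn's Theorem 7.7; it relies on Beauville's analysis via the Fourier transform (e.g., to show $\CH^2_{\alg}(\mf{h}^j(A)) = 0$ for $j \notin \{3,4\}$ and that algebraic and numerical equivalence agree on $\CH^2(\mf{h}^4(A))$). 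Since you flag this yourself as the main obstacle, the proposal is incomplete as written, and the honest move here is the one the paper makes: cite Kahn rather than reprove.
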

	
	We call $t^3(A)$ the transcendental motive of $A$. 
	
	\subsection{The transcendental motive of $E^2$}
	To compute $t^2(E^2)$ we will need to know the N\'eron-Severi group of $E^2$, which is determined by the following classical result. For completeness we sketch a proof here.
	
	\begin{thm}\label{thmNSE2}
		Let $X$ and $Y$ be smooth, projective, geometrically integral varieties over a field $k$, $\sigma_X \in X(k)$, and $\sigma_Y \in Y(k)$. The N\'eron-Severi group of $X \times_k Y$ is given by
		\begin{gather*}
			\NS(X \times_k Y) \cong \NS(X) \oplus \NS(Y) \oplus 
			\Hom_{\GrpSch_k}(\Alb(X), \Pic^0(Y)).
		\end{gather*}
	\end{thm}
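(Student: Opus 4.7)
The plan is to decompose $\Pic(X \times_k Y)$ using the rational points $\sigma_X$ and $\sigma_Y$, identify the third summand via the universal property of the Albanese variety, and finally pass to the quotient by algebraic equivalence.

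First, I would use the rational points to split the projection pullbacks: the restrictions $\sigma_Y^*: \Pic(X \times_k Y) \to \Pic(X)$ and $\sigma_X^*: \Pic(X \times_k Y) \to \Pic(Y)$ are retractions of $\pi_X^*$ and $\pi_Y^*$, respectively. Setting $R = \ker(\sigma_X^*) \cap \ker(\sigma_Y^*)$, a standard seesaw argument produces the direct sum decomposition
\begin{gather*}
	\Pic(X \times_k Y) \cong \pi_X^* \Pic(X) \oplus \pi_Y^* \Pic(Y) \oplus R.
\end{gather*}

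Second, I would identify $R$ with $\Hom_{\GrpSch_k}(\Alb(X), \Pic^0(Y)_{\red})$. Given $\mc{L} \in R$, the triviality of $\mc{L}$ along $\{\sigma_X\} \times Y$ rigidifies the family $\{\mc{L}|_{X \times \{y\}}\}_y$ into a $k$-morphism $\phi: Y \to \Pic_{X/k}$ (viewing the Picard scheme as a functor) sending $\sigma_Y$ to the identity. Connectedness of $Y$ forces $\phi$ to factor through the identity component $\Pic^0(X)_{\red}$, and the universal property of the Albanese variety then produces a group scheme homomorphism $\Alb(Y) \to \Pic^0(X)_{\red}$, which by Albanese-Picard duality corresponds to a homomorphism $\Alb(X) \to \Pic^0(Y)_{\red}$. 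Conversely, pulling back a Poincaré line bundle along the composition $X \times_k Y \to \Alb(X) \times_k \Pic^0(Y)_{\red}$ produces an element of $R$, and these two constructions are mutually inverse.

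Third, I would quotient by $\Pic^0(X \times_k Y)$ to obtain the Néron-Severi group. Using $\Alb(X \times_k Y) \cong \Alb(X) \times_k \Alb(Y)$ (which gives dually $\Pic^0_{X \times_k Y/k} \cong \Pic^0_{X/k} \times_k \Pic^0_{Y/k}$), the algebraically trivial line bundles on $X \times_k Y$ decompose as $\pi_X^* \Pic^0(X) \oplus \pi_Y^* \Pic^0(Y)$. This subgroup intersects $R$ trivially, since any element of the intersection would have vanishing restrictions along both $X \times \{\sigma_Y\}$ and $\{\sigma_X\} \times Y$, forcing both summands to vanish. The decomposition therefore passes cleanly to $\NS(X \times_k Y)$, yielding the claimed formula. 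The main technical obstacle will be the second step: establishing the bijection between divisorial correspondences and $\Hom_{\GrpSch_k}(\Alb(X), \Pic^0(Y)_{\red})$ over a possibly non-algebraically-closed field $k$ requires careful use of the rigidified Picard functor (as opposed to just its $k$-points) and a clean invocation of the universal property of the Albanese variety.
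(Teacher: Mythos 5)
Your proposal is correct and follows essentially the same route as the paper, whose proof is a direct citation of \cite[Corollaries 4.4.8 and 4.4.9]{vDdB2018}: rigidify the Picard group along the two rational points to split off a summand of divisorial correspondences, identify that summand with $\Hom_{\GrpSch_k}(\Alb(X), \Pic^0(Y)_{\red})$ via the rigidified Picard functor and the Albanese universal property, then pass to Néron--Severi using $\Alb(X \times_k Y) \cong \Alb(X) \times_k \Alb(Y)$. One small quibble: the first splitting is not a seesaw argument but a purely formal one, obtained from the idempotent $\mc{L} \mapsto \mc{L} - \pi_X^* \sigma_Y^* \mc{L} - \pi_Y^* \sigma_X^* \mc{L}$, using that $\sigma_X^* \pi_Y^*$ and $\sigma_Y^* \pi_X^*$ are identities while $\sigma_X^* \pi_X^*$ and $\sigma_Y^* \pi_Y^*$ vanish on $\Pic$.
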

	\begin{proof}
		From \cite[Corollary 4.4.8]{vDdB2018}, we get a certain isomorphism of sheaves on the fppf site of a scheme $S$. Taking $X_1 = X$, $X_2 = Y$, $S = \Spec k$, $\sigma_1 = \sigma_X$ and $\sigma_2 = \sigma_Y$ in the corollary, then passing to global sections yields an isomorphism of abelian groups
		\begin{gather*}
			\Pic(X \times_k Y)(k) \cong \Pic(X)(k) \oplus \Pic(Y)(k) \oplus 
			\Hom_{\GrpSch_k}(\Alb(X), \Pic^0(Y)),
		\end{gather*}
		By Corollary 4.4.9 in loc. cit., the isomorphism maps $\Pic^0(X \times_k Y)(k)$ onto $\Pic^0(X)(k) \oplus \Pic^0(Y)(k)$, so 
		\begin{gather*}
			\NS(X \times_k Y) \cong \NS(X) \oplus \NS(Y) \oplus 
			\Hom_{\GrpSch_k}(\Alb(X), \Pic^0(Y)).
		\end{gather*}
	\end{proof}
	
	Let $E$ be an ordinary CM elliptic curve over $k$ with endomorphism algebra $F$.
	
	\begin{thm}\label{thmTransMotiveOfE2}
		The transcendental motive $t^2(E^2)$ of $E^2$ is isomorphic in $\mc{M}(k)_\Q$ to $\otimes_F^2 \mf{h}^1(E)$.
	\end{thm}
	\begin{proof}
		Corollary \ref{corQTensorPowerCMDecomp} decomposes $\mf{h}^2(E^2)$ as 
		\begin{gather}\label{eqOCMDecompH2E2}
			\mf{h}^2(E^2) \cong \otimes_F^2 \mf{h}^1(E) \oplus 4 \cdot \mathbf{1}(-1),
		\end{gather}
        via the K\"unneth formula. The Artin motive of $\ul{\NS}_{E^2}$ is $4 \cdot \mathbf{1}$ by Theorem \ref{thmNSE2}, so \eqref{eqOCMDecompH2E2} is the same as the decomposition of $\mf{h}^2(E^2)$ given by Theorem \ref{thmTransMotOfSurface}.
	\end{proof}
	\begin{thm}\label{thmTransMotOfE3}
		The transcendental motive $t^3(E^3)$ of $E^3$ is isomorphic to $\otimes_F^3 \mf{h}^1(E)$.
	\end{thm}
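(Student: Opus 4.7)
The plan is to compute $\mf{h}^3(E^3)$ explicitly via the canonical Chow--K\"unneth decomposition together with Corollary \ref{corQTensorPowerCMDecomp}, then identify the result with the coniveau decomposition afforded by Lemma \ref{lemConiveauLiftAbelian3Fold}.

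First, I apply the K\"unneth formula to the canonical Deninger--Murre decomposition $\mf{h}(E) \cong \mathbf{1} \oplus \mf{h}^1(E) \oplus \mathbf{1}(-1)$. The degree-$3$ summand $\mf{h}^3(E^3)$ is the direct sum over triples $(i_1, i_2, i_3)$ with $i_j \in \{0,1,2\}$ and $i_1 + i_2 + i_3 = 3$; only the multi-index $(1,1,1)$ and the six permutations of $(2,1,0)$ contribute, giving $\otimes_\Q^3 \mf{h}^1(E) \oplus 6 \cdot \mf{h}^1(E)(-1)$. Expanding the first term by Corollary \ref{corQTensorPowerCMDecomp} (together with a direct count matching the dimension $\binom{6}{3} = 20$) yields an isomorphism in $\mc{M}(k)$
\begin{equation*}
  \mf{h}^3(E^3) \cong \otimes_F^3 \mf{h}^1(E) \oplus 9 \cdot \mf{h}^1(E)(-1).
\end{equation*}

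Next, I verify that this decomposition is a lift, in the sense of Lemma \ref{lemConiveauLiftAbelian3Fold}, of the coniveau decomposition of $\mf{h}^3(E^3)_{\num}$. Since $\mf{h}^1(E)$ is effective and primitive, $\mf{h}^1(E)(-1)_{\num}$ has coniveau $1$, so it suffices to show that $\otimes_F^3 \mf{h}^1(E)_{\num}$ has coniveau $0$. By Lemma \ref{lemConstAfterBaseChange}, together with a spreading-out/specialization argument using smooth proper base change for $\ell$-adic cohomology, it is enough to treat the case where $E$ is defined over a finite field $\F_q$ with Frobenius eigenvalues $\alpha$ and $\bar\alpha$ on $H^1(E_{\ol{\F_q}}, \Q_\ell)$. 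Any summand of $\otimes_F^3 \mf{h}^1(E)_{\num}$ of positive coniveau $j$ would force the Frobenius eigenvalue $\alpha^3$ on the realization to equal $q^j \lambda$ for some Weil $q$-integer $\lambda$ of weight $3 - 2j$. Using the prime factorizations $\alpha \mc{O}_F = \mf{p}_1^e$, $\bar\alpha \mc{O}_F = \mf{p}_2^e$ of Lemma \ref{lemFrobEigenvalue}(i), the quotient $\alpha^3/q^j = \alpha^{3-j}/\bar\alpha^{j}$ acquires negative $\mf{p}_2$-adic valuation for every $j \ge 1$ and hence fails to be an algebraic integer. So $j = 0$ is the only possibility, and $\otimes_F^3 \mf{h}^1(E)_{\num}$ is of coniveau $0$.

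Combining these observations, the decomposition of $\mf{h}^3(E^3)$ from the first step is a lift to $\mc{M}(k)$ of the coniveau decomposition of $\mf{h}^3(E^3)_{\num}$, with $\otimes_F^3 \mf{h}^1(E)$ playing the role of the coniveau-$0$ part and $9 \cdot \mf{h}^1(E)(-1)$ the coniveau-$1$ part. By the uniqueness up to non-canonical isomorphism of such a lift, furnished by Lemma \ref{lemConiveauLiftAbelian3Fold}, it follows that $t^3(E^3) \cong \otimes_F^3 \mf{h}^1(E)$, as claimed (and, as a byproduct, that $J$ is isogenous to $E^9$).

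The main obstacle is the middle step: verifying that $\otimes_F^3 \mf{h}^1(E)_{\num}$ has coniveau $0$ over a general field $k$ requires reducing to the finite-field case, where the arithmetic of Frobenius eigenvalues from Lemma \ref{lemFrobEigenvalue} can be applied, and this reduction (via Lemma \ref{lemConstAfterBaseChange} together with spreading out and smooth proper base change) must be executed with care.
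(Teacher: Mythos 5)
Your overall plan mirrors the paper's: compute $\mf{h}^3(E^3) \cong \otimes_F^3 \mf{h}^1(E) \oplus 9 \cdot \mf{h}^1(E)(-1)$, note $\mf{h}^1(E)_{\num}$ is primitive, and invoke the uniqueness of the lift furnished by Lemma \ref{lemConiveauLiftAbelian3Fold}. The computation of $\mf{h}^3(E^3)$ is correct. Where you diverge is in the middle step: you attempt to prove directly that $\otimes_F^3 \mf{h}^1(E)_{\num}$ has coniveau zero over the arbitrary base $k$, by reducing to a Frobenius-eigenvalue computation over a finite field. The paper instead argues by contradiction through the abelian variety $J$ in Lemma \ref{lemConiveauLiftAbelian3Fold}: from the split surjection $\mf{h}^1(J)_{\num} \twoheadrightarrow 9 \cdot \mf{h}^1(E)_{\num}$ and a dimension count on $H^3$, the only alternative to the desired conclusion is that $t^3(E^3) = 0$ and $\otimes_F^3 \mf{h}^1(E)_{\num} \cong \mf{h}^1(B)(-1)_{\num}$ for an elliptic curve $B$, which is killed by Theorem \ref{thmUniquenessOfCMDecomp}. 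The content of the two proofs is essentially the same — ruling out $\dim B = 1$ is precisely the statement that no simple piece of $\otimes_F^3 \mf{h}^1(E)_{\num}$ has coniveau one — but the paper packages the reduction-to-finite-fields once and for all inside Theorem \ref{thmUniquenessOfCMDecomp}, whereas you redo it on the spot.

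The part that is genuinely under-justified in your version is the sentence ``it is enough to treat the case where $E$ is defined over a finite field.'' Effectivity is an existential condition over $k$: a coniveau-one simple summand $S$ of $\otimes_F^3 \mf{h}^1(E)_{\num}$ provides a smooth projective $k$-variety $X$ and a projector exhibiting $S(1)$ as a summand of $\mf{h}(X)_{\num}$, and one must spread out \emph{both} $E$ and this unknown $X$, choose a place of common good reduction with residue field $\F_q$, and then argue that the specialized Frobenius eigenvalues on the summand cut out by the specialized projector are constrained to be $q$ times a Weil $q$-integer. This works — it is close to the argument inside Theorem \ref{thmUniquenessOfCMDecomp}, and you could alternatively have quoted that theorem directly as the paper does — but your statement of the reduction does not make the witnessing variety $X$ visible and so omits the actual load-bearing step. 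I would also flag a small overstatement: ``would force the Frobenius eigenvalue $\alpha^3$ on the realization to equal $q^j\lambda$'' should allow for the eigenvalue $\ol{\alpha}^3$ instead, since a simple summand of $\otimes_F^3 \mf{h}^1(E)_{\num}$ need not carry both eigenvalues; the conclusion is of course unaffected since $\ol{\alpha}^3/q^j$ is likewise not an algebraic integer.
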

	
	\begin{proof}
		By Lemma \ref{lemConiveauLiftAbelian3Fold},
		\begin{equation}\label{eqTransDecompH3E3}
			\mf{h}^3(E^3) \cong t^3(E^3) \oplus \mf{h}^1(J)(-1)
		\end{equation}
		for a certain abelian variety $J$ over $k$, such that $\mf{h}^1(J)(-1)_{\num}$ is the maximal coniveau one summand of $\mf{h}^3(E^3)_{\num}$. 
		\par
		Note that $\mf{h}^1(E)_{\num}$ is simple, by Lemma \ref{lemMotiveCalcs}, and primitive, by \cite[Lemma 4.4]{Kahn2021}. Corollary \ref{corQTensorPowerCMDecomp} and the K\"unneth formula show that
		\begin{gather}\label{eqh3E3Decomp}
			\mf{h}^3(E^3) \cong \otimes_F^3 \mf{h}^1(E) \oplus 9 \cdot \mf{h}^1(E)(-1),
		\end{gather} 
		so $9 \cdot \mf{h}^1(E)_{\num}$ is a summand of $\mf{h}^1(J)_{\num}$.
		\par
		There is a split epimorphism $\mf{h}^1(J)_{\num} \to 9 \cdot \mf{h}^1(E)_{\num}$, so by Lemma \ref{lemMotiveCalcs}, $J$ is isogenous to a product $E^9 \times B$, where $B$ is some abelian variety. By the K\"unneth formula, $\dim_{\Q_\ell} H^3(\mf{h}^3(E^3), \Q_\ell) = 20$, so that on applying $H^3(\bullet, \Q_\ell)$ to both sides of \eqref{eqTransDecompH3E3}, we see that either $B = 0$, or $B$ is one-dimensional.
		\par
		In the case $B = 0$, we have $\mf{h}^1(J) = 9 \cdot \mf{h}^1(E)$. But then by semisimplicity of numerical motives, both decompositions \eqref{eqTransDecompH3E3} and \eqref{eqh3E3Decomp} induce the coniveau decomposition of $\mf{h}^3(E^3)_{\num}$
		upon passing to numerical equivalence. By uniqueness of the lift of the coniveau decomposition to rational equivalence, $t^3(E^3) \cong \otimes_F^3 \mf{h}^1(E)$.
		\par
		To finish the proof it will suffice to show that $B$ cannot be one-dimensional. Suppose otherwise. Then
		\begin{gather*}
			\dim_{\Q_\ell} H^3(\mf{h}^1(J)(-1), \Q_\ell) = \dim_{\Q_\ell} H^3(E^3, \Q_\ell),
		\end{gather*}
		so $H^\bullet(t^3(E^3), \Q_\ell) = 0$, and by conservativity (Corollary \ref{realDetectsZero}), $t^3(E^3) = 0$. Thus 
		$\mf{h}^3(E^3) \cong \mf{h}^1(J)(-1),$
		and 
		\begin{gather}\label{eqContrIsoOfOCMMotives}
			\otimes_F^3 \mf{h}^1(E) \oplus 9 \cdot \mf{h}^1(E)(-1) \cong \mf{h}^1(B)(-1) \oplus 9 \cdot \mf{h}^1(E)(-1).
		\end{gather}
		By semisimplicity of numerical motives, $\otimes_F^3 \mf{h}^1(E)_{\num} \cong \mf{h}^1(B)(-1)_{\num}$. The endomorphism algebra of $\otimes_F^3 \mf{h}^1(E)_{\num}$ is isomorphic to $F$, so by Lemma \ref{lemMotiveCalcs}, $B$ has ordinary CM. But then \eqref{eqContrIsoOfOCMMotives} contradicts Theorem \ref{thmUniquenessOfCMDecomp}.
	\end{proof}
	
	\begin{cor}\label{corCH2t3OverFuncField}
		If $K$ is the function field of a smooth projective variety $X$ over $k$, then
		\begin{gather*}
			\CH^2(\otimes_F^3 \mf{h}^1(E_K)) \cong \CH^2(\mf{h}(X) \otimes_\Q \otimes_F^3 \mf{h}^1(E)).
		\end{gather*}
	\end{cor}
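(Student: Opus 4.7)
Let $p \in \CH^3(E^3 \times_k E^3)$ be an idempotent defining $\otimes_F^3 \mf{h}^1(E)$. The map in question is the pullback along the generic-point inclusion $\Spec K \hookrightarrow X$; my plan is to establish surjectivity and injectivity separately.

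For surjectivity, I would apply the localization exact sequence \eqref{eqLocalization} to the projection $X \times_k E^3 \to X$, obtaining
\[
\bigoplus_{V \subsetneq X} \CH^{2 - \codim_X V}(V \times_k E^3) \to \CH^2(X \times_k E^3) \to \CH^2(E^3_K) \to 0.
\]
This sequence is equivariant for the action of $1 \otimes p$ by the compatibility of correspondences with flat base change, so passing to $(1 \otimes p)_*$-images descends to the desired surjection of the projected Chow groups.

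For injectivity, the projection formula shows that every element of the kernel has the form $(i_V \times 1)_*(1_V \otimes p)_*\beta$ for some closed $V \subsetneq X$ of codimension $c_V \ge 1$ and $\beta \in \CH^{2-c_V}(V \times_k E^3)$, so it suffices to show $(1_V \otimes p)_*\beta = 0$ in each case. For $c_V \ge 3$ the source Chow group vanishes for dimension reasons. For $c_V = 2$, $\beta$ is a scalar multiple of $[V \times E^3]$, and $p_*[E^3]$ lies in $\CH^0(\otimes_F^3 \mf{h}^1(E)) \subseteq \CH^0(\mf{h}^3(E^3))$, which vanishes by orthogonality of the Chow-K\"unneth projectors $\pi_0^{E^3}$ and $\pi_3^{E^3}$ on the abelian threefold $E^3$.

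The main case is $c_V = 1$. By de Jong's theorem on alterations, I would choose a proper generically finite surjection $\pi \colon \tilde V \to V$ with $\tilde V$ smooth projective; then $\pi_* \colon \CH_\bullet(\tilde V \times_k E^3)_\Q \twoheadrightarrow \CH_\bullet(V \times_k E^3)_\Q$ is surjective, so by the projection formula it is enough to show $(1_{\tilde V} \otimes p)_* \CH^1(\tilde V \times_k E^3) = 0$. Since $p$ factors through the Chow-K\"unneth projector $\pi_3^{E^3}$ onto $\mf{h}^3(E^3)$, it is sufficient to prove the stronger vanishing of $(1_{\tilde V} \otimes \pi_3^{E^3})_*$ on $\CH^1(\tilde V \times_k E^3)$. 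For this, theorem \ref{thmNSE2} combined with the standard identity $\Pic^0(\tilde V \times_k E^3) = \Pic^0(\tilde V) \oplus \Pic^0(E^3)$ gives
\[
\CH^1(\tilde V \times_k E^3)_\Q \cong \Pic(\tilde V)_\Q \oplus \Pic(E^3)_\Q \oplus \Hom(\Alb(\tilde V), \Pic^0(E^3))_\Q,
\]
and each summand is supported in a K\"unneth piece $\CH^1(\mf{h}^j(\tilde V) \otimes \mf{h}^{j'}(E^3))$ with $j' \le 2$: divisors $D \times_k E^3$ lie in the $j'=0$ part, divisors $\tilde V \times_k D'$ lie in $j' \in \{1,2\}$ according to whether $D'$ represents a class in $\Pic^0$ or $\NS$, and the correspondence-type divisors $D_\phi$ lie in the $(1,1)$ K\"unneth piece. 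Orthogonality of the Chow-K\"unneth projectors of $E^3$ then annihilates every summand under $(1_{\tilde V} \otimes \pi_3^{E^3})_*$, completing the proof. The principal technical input is de Jong's alteration theorem; the remainder uses only standard features of Chow-K\"unneth decompositions for abelian varieties together with the Picard decomposition of theorem \ref{thmNSE2}.
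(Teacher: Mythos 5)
Your proof is correct in its essentials but proceeds along a genuinely different route from the paper. The paper first invokes Theorem \ref{thmTransMotOfE3} to identify $\otimes_F^3 \mf{h}^1(E')$ with the transcendental motive $t^3((E')^3)$, then applies \cite[Proposition 7.6]{Kahn2021}, which computes $\CH^2(t^3(A_{k(X)}))$ as $\Hom_{\mc{M}(k)_\Q}(t^3(A), \mf{h}(X)(-1))$; a short chain of dualities and adjunctions in $\mc{M}(k)_F$ then converts the $\Hom$-group into $\CH^2(\mf{h}(X) \otimes_\Q \otimes_F^3 \mf{h}^1(E'))$. You bypass the transcendental-motive machinery entirely: your argument is a direct cycle-theoretic one, establishing surjectivity by the localization sequence and injectivity by showing that the correspondence $\Delta_X \otimes p$ kills all contributions from $V \times_k E^3$ with $V \subsetneq X$ closed, the crux being the codimension-one case, where you alter to a smooth model $\tilde V$ and use the decomposition of $\Pic(\tilde V \times_k E^3)_\Q$ (via Theorem \ref{thmNSE2}, $\Pic^0(\tilde V \times E^3) = \Pic^0(\tilde V) \times \Pic^0(E^3)$, and the concentration of $\CH^1$ of the abelian threefold $E^3$ in K\"unneth degrees $j' \le 2$) to show that $(\Delta_{\tilde V} \otimes \pi_3^{E^3})_*$ annihilates every divisor. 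What your approach buys is independence from Theorem \ref{thmTransMotOfE3} and its uniqueness-of-coniveau-lift machinery, at the cost of invoking de Jong's alterations — a heavy tool the paper's proof avoids. A few points to tighten: elements of the kernel are finite sums over several $V$, not a single one; de Jong's theorem in characteristic $p$ produces the smooth $\tilde V$ over a finite purely inseparable extension, so you should invoke Lemma \ref{lemCHInjectsUnderBaseExt}(ii) to pass back; the Picard decomposition requires rational points on $\tilde V$, which again needs a finite extension; and the step $(\pi_3^{E^3})_*\CH^1(E^3) = 0$ uses not just orthogonality of projectors but also the fact that $\CH^1(\mf{h}^3(E^3)) = 0$ (Beauville's theorem for $\CH^1$ of an abelian variety), which you invoke implicitly via the $\Pic^0 \oplus \NS$ splitting but should name. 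With these caveats addressed, the argument stands.
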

	\begin{proof}
		Theorem \ref{thmTransMotOfE3} and \cite[Proposition 7.6]{Kahn2021} imply that
		\begin{gather*}
			\CH^2(\otimes_F^3 \mf{h}^1(E_K)) \cong \Hom_{\mc{M}(k)_\Q}(\otimes_F^3 \mf{h}^1(E), \mf{h}(X)(-1)).
		\end{gather*}
		The dual of $\otimes_F^3 \mf{h}^1(E)$ in $\mc{M}(k)_\Q$ is $\otimes_F^3 \mf{h}^1(E)(3)$, so 
		\begin{align*}
			\CH^2(\otimes_F^3 \mf{h}^1(E_K)) &= \Hom_{\mc{M}(k)_\Q}(\otimes_F^3 \mf{h}^1(E), \mf{h}(X)(-1)) \\
            &= \Hom_{\mc{M}(k)_\Q}(\mathbf{1}, \mf{h}(X) \otimes_\Q \otimes_F^3 \mf{h}^1(E)(2)) \\
            &= \Hom_{\mc{M}(k)_\Q}(\mathbf{1}(-2), \mf{h}(X) \otimes_\Q \otimes_F^3 \mf{h}^1(E)) \\
			&= \CH^2(\mf{h}(X) \otimes_\Q \otimes_F^3 \mf{h}^1(E)).
		\end{align*}
        Here we have used that $\Hom(A, B) = \Hom(\mathbf{1}, A^\vee \otimes B)$ in a rigid category.
	\end{proof}
	
	\printbibliography
\end{document}
\typeout{get arXiv to do 4 passes: Label(s) may have changed. Rerun}